\newcommand{\nc}{\newcommand}
\nc{\dmo}{\DeclareMathOperator}
\dmo{\ra}{\rightarrow}
\dmo{\Prob}{\mathbb{P}}
\dmo{\E}{\mathbb{E}}
\dmo{\N}{\mathbb{N}}
\dmo{\Z}{\mathbb{Z}}
\dmo{\Q}{\mathbb{Q}}
\dmo{\R}{\mathbb{R}}
\dmo{\C}{\mathcal{C}}
\dmo{\X}{\mathcal{X}}
\dmo{\U}{\mathcal{U}}
\dmo{\T}{\mathcal{T}}
\dmo{\F}{\mathcal{F}}
\dmo{\AC}{\mathcal{AC}}
\dmo{\w}{\omega}
\dmo{\MIN}{\mathcal{MIN}}
\dmo{\Mod}{Mod}
\dmo{\PMod}{PMod}
\dmo{\PMF}{\mathcal{PMF}}
\dmo{\Mat}{Mat}
\dmo{\supp}{supp}
\dmo{\UE}{\mathcal{UE}}
\dmo{\vol}{vol}
\dmo{\B}{B}
\dmo{\PB}{PB}
\dmo{\PR}{PSL(2,\mathbb{R})}
\dmo{\GL}{GL(k, \mathbb{C})}
\dmo{\SL}{SL(2, \mathbb{Z})}
\dmo{\Isom}{Isom}
\dmo{\RP}{\mathbb{R} \mathrm{P}}
\dmo{\I}{\mathcal{I}}
\dmo{\el}{\ell_{\C}}
\dmo{\NN}{\mathcal{N}}
\dmo{\rk}{rank}
\dmo{\tr}{tr}
\dmo{\llangle}{\langle\langle}
\dmo{\rrangle}{\rangle\rangle}
\dmo{\Unif}{Unif}
\dmo{\Out}{Out}
\dmo{\Homeo}{Homeo}
\dmo{\Diff}{Diff}
\dmo{\sumRho}{\mathcal{N}}
\dmo{\stopping}{\vartheta}
\dmo{\diam}{\operatorname{diam}}
\dmo{\inte}{\operatorname{int}}
\dmo{\Len}{Len}
\dmo{\Leb}{Leb}
\tikzset{->-/.style={decoration={
  markings,
  mark=at position #1 with {\arrow{>}}},postaction={decorate}}}
\nc{\nt}{\newtheorem}
\newtheorem{thm}{{\bf Theorem}}[section]
\newtheorem{lem}[thm]{{\bf Lemma}}
\newtheorem{cor}[thm]{{\bf Corollary}}
\newtheorem{prop}[thm]{{\bf Proposition}}
\newtheorem{claim}[thm]{Claim} 
\newtheorem{remark}[thm]{Remark}
\newtheorem{dfn}[thm]{Definition}
\numberwithin{equation}{section}
\title[Pivoting technique for $\operatorname{Homeo}(S^{1})$]{Pivoting technique for the circle homeomorphism group}
\date{\today}
\author{Inhyeok Choi}
\email{%
        inhyeokchoi48@gmail.com
        }
\address{%
		School of Mathematics, KIAS\\
		85 Hoegiro Dongdaemun-gu, Seoul 02455, Republic of Korea\\
}
\begin{document}
\begin{abstract}
We adapt Gou{\"e}zel's pivoting technique to the circle homeomorphism group. As an application, we give different proofs of Gilabert Vio's probabilistic Tits alternative and Malicet's exponential synchronization.

\noindent{\bf Keywords.} 

\noindent{\bf MSC classes:} 20F67, 30F60, 57K20, 57M60, 60G50
\end{abstract}

\maketitle


%
%

\section{Introduction}

The celebrated Tits alternative asserts that every finitely generated linear group either contains a free subgroup of rank 2 or is virtually solvable \cite{tits1972free}. An analogous statement for finitely generated subgroups of the homeomorphism group $\Homeo(S^{1})$ of the circle $S^{1}$ is not true in general. Indeed, $\Homeo(S^{1})$ contains Thompson's group $F$ as a non-virtually solvable finitely generated subgroup, whose every pair of elements have nontrivial relation (see \cite{ghys1987sur-un-groupe} for a $C^{\infty}$ example). We are thus led to a weaker, measure-theoretical version of Tits alternative:

\begin{quote}
Does every subgroup of $\Homeo(S^{1})$ either preserve a probability measure on $S^{1}$ or contain a free subgroup?
\end{quote}

Conjectured by {\'E}tienne Ghys, this question was answered by Gregory Margulis \cite{margulis2000free} (see \cite{beklaryan2002on-analogues} also); Ghys gave another proof in \cite{ghys2001groups}. Both approach establishes one case of the alternatives by means of the ping-pong lemma. Let us introduce the notion of Schottky pairs to motivate this.

\begin{dfn}\label{dfn:Schottky}
Let $f_{1}$ and $f_{2}$ be homeomorphisms of $S^{1}$. If there exist disjoint open sets $U_{1}, U_{2}, V_{1}, V_{2}$ of $S^{1}$ such that \[
f_{i}(S^{1} \setminus U_{i}) \subseteq V_{i}, \,\,f_{i}^{-1} (S^{1} \setminus V_{i})  \subseteq U_{i} \quad (i=1, 2),
\]
then we call $(f_{1}, f_{2})$ a \emph{Schottky pair associated with $(U_{1}, U_{2}, V_{1}, V_{2})$}, or simply a \emph{Schottky pair}. If each of $U_{1}, U_{2}, V_{1}, V_{2}$ is an interval (a finite union of intervals, resp.), we say that $(f_{1}, f_{2})$ is a Schottky pair associated with intervals (finite unions of intervals, resp.).
\end{dfn}
Given a subsemigroup $G$ of $\Homeo(S^{1})$, we say that the action of $G$ on $S^{1}$ is \emph{proximal} if $\inf_{g \in G} d(gx, gy) = 0$ for every pair of points $x, y \in S^{1}$. 

Tits' ping-pong lemma asserts that a Schottky pair generates a free group. This is also how Margulis and Ghys established the weak Tits alternative: \begin{thm}[{\cite[Theorem 2]{margulis2000free}, \cite[Section 5.2]{ghys2001groups}}]
Let $G$ be a subgroup of $\Homeo(S^{1})$ that does not admit any invariant probability measure on $S^{1}$. Then $G$ contains a Schottky pair associated with finite unions of intervals. If, moreover, the action of $G$ on $S^{1}$ is proximal, then $G$ contains a Schottky pair associated with intervals.
\end{thm}

In this note, we consider a generalization of this theorem to subsemigroups of $\Homeo(S^{1})$:

\begin{theorem}[{\cite{malicet2017random}}]\label{thm:main0}
Let $G$ be a subsemigroup of $\Homeo(S^{1})$ that does not admit any invariant probability measure on $S^{1}$. Then $G$ contains a Schottky pair associated with finite unions of intervals. If, moreover, the action of $G$ on $S^{1}$ is proximal, then $G$ contains a Schottky pair associated with intervals.
\end{theorem}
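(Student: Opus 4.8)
The plan is to realise $G$, or a suitable finitely generated subsemigroup of it, as the semigroup driven by a random walk, to use the pivoting technique to get sharp almost‑sure contraction for the long products of that walk, and to read off a Schottky pair from two decoupled such products; this is the semigroup analogue of the Margulis--Ghys scheme, with pivoting supplying the quantitative input. First I would carry out a few reductions. For $g\in\operatorname{Homeo}(S^{1})$ the set $\operatorname{Inv}(g)=\{\nu\in\mathcal P(S^{1}):g_{*}\nu=\nu\}$ is nonempty and weak‑$\ast$ closed in the compact space $\mathcal P(S^{1})$, and $G$ preserves no probability measure exactly when $\bigcap_{g\in G}\operatorname{Inv}(g)=\emptyset$; by compactness some finite intersection $\bigcap_{i=1}^{m}\operatorname{Inv}(g_{i})$ is already empty, so $G_{0}=\langle g_{1},\dots,g_{m}\rangle$ preserves no measure and a Schottky pair inside $G_{0}$ serves for $G$. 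Collapsing the complementary intervals of the exceptional minimal set (if there is one) yields a minimal, still measure‑free action, and such an action is a finite cyclic cover of a minimal \emph{proximal} one: there is a monotone equivariant map $\pi\colon S^{1}\to\bar S^{1}\cong S^{1}$ of degree $k\ge 1$ onto it. Since the $\pi$‑preimage of an interval is a union of $k$ intervals and elements of $G_{0}$ lifting prescribed elements of $\bar G_{0}$ exist, a Schottky pair downstairs associated with intervals transports to one upstairs associated with finite unions of intervals; and when $G$ itself is proximal I would not pass to $G_{0}$, since proximality passes to $\pi$ and forces $k=1$ (in a genuine $k\ge 2$ cover two points in a common fibre stay a definite distance apart under the whole semigroup). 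So it remains to produce, for a minimal proximal measure‑free action of $\bar G$ on $\bar S^{1}$, elements $f_{1},f_{2}$ and disjoint open intervals $U_{1},U_{2},V_{1},V_{2}$ with $f_{i}(\bar S^{1}\setminus U_{i})\subseteq V_{i}$ — the dual inclusion $f_{i}^{-1}(\bar S^{1}\setminus V_{i})\subseteq U_{i}$ is then automatic, as $f_{i}$ is a bijection.

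Now fix a probability measure $\mu$ generating $\bar G$ as a semigroup and consider the Bernoulli space $(\Omega,\mu^{\otimes\N})$ with increments $g_{1},g_{2},\dots$ and products $\omega_{n}=g_{1}\cdots g_{n}$. Let $\nu$ be a $\mu$‑stationary measure on $\bar S^{1}$; minimality makes it fully supported and non‑atomic, while the hypothesis makes it non‑invariant, so the bounded martingale $(\omega_{n})_{*}\nu$ converges $\mu^{\otimes\N}$‑almost surely to a limit that is almost surely a Dirac mass $\delta_{\xi^{+}(\omega)}$ — equivalently, the walk synchronises, and the same holds for the reversed walk with a point $\xi^{-}(\omega)$.

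The role of the pivoting technique is to upgrade this qualitative synchronisation to the quantitative statement needed below: $\mu^{\otimes\N}$‑almost surely, and with exponential large‑deviation control, for all large $n$ there are intervals $U_{n},V_{n}\subseteq\bar S^{1}$ with $\diam U_{n},\diam V_{n}\to 0$, with $U_{n}\cap V_{n}=\emptyset$, and with $\omega_{n}(\bar S^{1}\setminus U_{n})\subseteq V_{n}$, so that $\omega_{n}$ is a strong North--South‑type contraction whose repelling and attracting windows $U_{n},V_{n}$ concentrate near $\xi^{-}(\omega)$ and $\xi^{+}(\omega)$. Establishing this — essentially Malicet's exponential synchronisation — is the main obstacle, and it is where Gou\"ezel's combinatorial pivoting apparatus must be transplanted to $\operatorname{Homeo}(S^{1})$, using crude Schottky‑type configurations (which the absence of an invariant measure furnishes) in place of the Schottky set that drives pivoting in the hyperbolic setting.

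Finally I would run two independent copies $\omega^{(1)},\omega^{(2)}$ of the walk. Because the attracting and repelling points $\xi^{\pm}$ are distributed according to the non‑atomic measure $\nu$ (respectively the stationary measure of the reversed walk), and because the pivoting estimates decouple the two walks sharply enough to control them simultaneously at one common scale, there is a large $n$ and a positive‑probability event on which the four windows $U^{(1)}_{n},V^{(1)}_{n},U^{(2)}_{n},V^{(2)}_{n}$ are pairwise disjoint. On that event, taking $f_{i}=\omega^{(i)}_{n}$ with these windows gives a Schottky pair for $\bar G$ associated with intervals; transporting it back through $\pi$ gives a Schottky pair in $G$ associated with finite unions of intervals in general, and with intervals when $G$ is proximal. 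Besides the contraction estimate, the one genuinely semigroup‑specific point is this last separation: in a group one would simply set $f_{2}=hf_{1}h^{-1}$ for a well‑chosen $h$, but $h^{-1}$ need not lie in $G$, which is why $f_{1}$ and $f_{2}$ are instead built as independent, pivoting‑decoupled random products.
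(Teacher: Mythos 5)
Your proposal takes a genuinely different route from the paper and has a fundamental circularity at its core. The central step you propose — using Gou\"ezel's pivoting apparatus to establish exponential synchronization and then extracting a Schottky pair from two independent synchronized walks — cannot be a proof of Theorem~\ref{thm:main0}, because the pivoting apparatus in this paper takes a Schottky set as an \emph{input}. Look at Proposition~\ref{prop:pivotingPrep} (and, upstream, Lemma~\ref{lem:pivotingPrep1} and Corollary~\ref{cor:measureConvTarget}): all of them presuppose an ambient Schottky set $S$ so that the step distribution admits an $(S,\epsilon)$-admissible power, and the paper obtains such an $S$ precisely from Theorem~\ref{thm:titsAltGen} and Corollary~\ref{cor:weakTitsProximal}, i.e.\ from the very statement you are trying to prove. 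Your phrase ``using crude Schottky-type configurations (which the absence of an invariant measure furnishes)'' is the entire content of Theorem~\ref{thm:main0}: producing those configurations \emph{is} the theorem, and you have not given an argument for them. Similarly, the assertion that the bounded martingale $(\omega_n)_*\nu$ converges to a point mass because $\nu$ is not invariant is, for semigroup actions, essentially Antonov's/Malicet's synchronization theorem and not a routine fact; invoking it here is another instance of assuming what is to be shown.

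A second serious gap is the proposed reduction via semiconjugacy to a minimal action followed by the finite-cyclic-cover structure theorem. Both steps are standard for \emph{group} actions (the former because the exceptional minimal set is invariant under $g$ and $g^{-1}$ alike, the latter being Ghys's theorem), but neither is available off the shelf for semigroups: the paper flags exactly this in Remark~\ref{rem:absCtsNo}, noting that a subsemigroup $G$ ``does not canonically act on the minimal set by homeomorphisms.'' Indeed, the minimal sets of the forward semigroup and of the semigroup generated by the inverses can differ, so there is no canonical $G$-equivariant collapse; your claim that ``elements of $G_0$ lifting prescribed elements of $\bar G_0$ exist'' quietly uses group-theoretic lifting that is not justified. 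Your compactness reduction to a finitely generated subsemigroup is fine but unnecessary, and correctly dualizing the Schottky inclusion via bijectivity is correct but a small point.

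By contrast, the paper's proof of Theorem~\ref{thm:main0} is entirely direct and makes no use of random walks, stationary measures, or pivoting. The key input is Theorem~\ref{thm:semigpInvProb}: if some point has no $G$-contractible neighborhood and all $G$-orbits are infinite, then a $G$-invariant probability measure is constructed by hand via a counting argument (maximal number of disjoint $G$-translates of shrinking intervals in a given dyadic arc, normalized). The contrapositive (Corollary~\ref{cor:semigpNoInv}) yields a uniform $\epsilon$ so that every $\epsilon$-short interval is $G$-contractible. From there, Theorem~\ref{thm:titsAltGen} runs an iterative compactness construction that contracts an increasing union of elementary sets (covering all but $(1-1/2N)^{n+1}$ of the circle) to a finite set, and a semigroup version of B.~H.~Neumann's covering lemma (Lemma~\ref{lem:disjtStab}) is used to separate the attracting and repelling finite sets and produce two Schottky generators with disjoint windows. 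The proximal/interval case is handled separately in Theorem~\ref{thm:proximalEquiv} via the notions of $G$-repeller and $G$-firm interval, avoiding any semiconjugacy. If you want to salvage your scheme, you would first need an independent, non-circular proof that measure-free semigroup actions admit a Schottky configuration (which is the paper's Theorem~\ref{thm:titsAltGen}); once that is in hand the pivoting machinery does apply, but at that point there is nothing left of Theorem~\ref{thm:main0} to prove.
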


This theorem was first proved by Dominique Malicet by means of ergodic theory \cite[Proposition 4.17]{malicet2017random}. We give a direct proof of Theorem \ref{thm:main0} that is motivated by Margulis' and Ghys' proofs.

Once we know that there exists a free sub(semi)group of a given sub(semi)group $G$ of $\Homeo(S^{1})$, we can ask if a \emph{random} sub(semi)group of $G$ is free. This is formulated in terms of random walks on $G$. For this, let us consider a Borel probability measure $\mu$ on $\Homeo(S^{1})$. The \emph{support} of $\mu$, denoted by $\supp \mu$, is defined as the complement of the largest $\mu$-null open subset of $\Homeo(S^{1})$. The subsemigroup of $\Homeo(S^{1})$ generated by $\supp \mu$ is denoted by $\llangle \supp \mu \rrangle$. 

In this direction, Mart{\'in} Gilabert Vio recently  proved the following theorem:

\begin{thm}[{\cite[Theorem A]{gilabert-vio2024probabilistic}}]\label{thm:gilabertVio}
Let $\mu_{1}$ and $\mu_{2}$ be probability measures on $\Diff_{+}^{1}(S^{1})$ such that $\llangle \supp \mu_{1} \rrangle$ and $\llangle \supp \mu_{2} \rrangle$ are subgroups with proximal actions on $S^{1}$ and such that the integral \[
\int_{G_{i}} \max \left\{ |g|_{Lip}, |g^{-1}|_{Lip} \right\}^{\delta} \,d\mu(g)
\]
is finite for some $\delta>0$ for $i=1, 2$.

Let $(Z_{n})_{n>0}$ and $(Z_{n}')_{n>0}$ be independent random walks generated by $\mu_{1}$ and $\mu_{2}$, respectively. Then there exists $q \in (0, 1)$ such that \[
\Prob \left( \textrm{$Z_{n}$ and $Z_{n}'$ comprise a ping-pong pair} \right) \ge 1-q^{n}
\]
for all $n \in \Z_{>0}$.
\end{thm}

As a consequence, Gilabert Vio proved that independent random walks eventually generate free subgroups almost surely. 

Theorem \ref{thm:gilabertVio} is concerned with random diffeomorphisms in a subgroup with proximal action. A companion result for more general homeomorphisms is as follows.

\begin{thm}[{\cite[Theorem C]{gilabert-vio2024probabilistic}}]\label{thm:gilabertVio2}
Let $\mu_{1}$ and $\mu_{2}$ be probability measures on $\Homeo_{+}^{1}(S^{1})$ such that $\llangle \supp \mu_{1} \rrangle$ and $\llangle \supp \mu_{2} \rrangle$ are subgroups without invariant probability measure. Let $(Z_{n})_{n>0}$ and $(Z_{n}')_{n>0}$ be independent (left) random walks generated by $\mu_{1}$ and $\mu_{2}$, respectively. Then the following holds almost surely:  \[
\lim_{N \rightarrow \infty} \frac{1}{N} \# \{0 \le n \le N \, | \, \textrm{$Z_{n}$ and $Z_{n}'$ comprise a ping-pong pair} \} =1.
\]
\end{thm}

We now present a strengthening of the above result.

\begin{theorem}\label{thm:main1}
Let $\mu_{1}$ and $\mu_{2}$ be nondegenerate probability measures  on $\Homeo(S^{1})$ such that the semigroups $ \llangle \supp \mu_{1} \rrangle$ and $ \llangle \supp \mu_{2} \rrangle$ do not admit invariant probability measures on $S^{1}$. Let $(Z_{n})_{n>0}$ and $(Z_{n}')_{n>0}$ be independent random walks generated by $\mu_{1}$ and $\mu_{2}$, respectively. Then there exists $\kappa >0$ such that \begin{equation}\label{eqn:main1}
\Prob \left( \textrm{$Z_{n}$ and $Z_{n}'$ comprise a ping-pong pair} \right) \ge 1-\frac{1}{\kappa} e^{-\kappa n}
\end{equation}
for all $n \in \Z_{>0}$.

Furthermore, the constant $\kappa$ is stable under perturbation in the following sense: there exist neighborhoods $\mathcal{U}_{1}$ of $\mu_{1}$ and $\mathcal{U}_{2}$ of $\mu_{2}$ in the space of probability measures on $\Homeo(S^{1})$ (with the weak-$\ast$ topology), respectively, so that Inequality \ref{eqn:main1} holds for a uniform $\kappa>0$ whenever $(Z_{n})_{n>0}$ is driven by a probability measure in $\mathcal{U}_{1}$ and $(Z_{n}')_{n>0}$ is driven by a probability measure in $\mathcal{U}_{2}$.
\end{theorem}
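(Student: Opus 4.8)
The plan is to combine Theorem \ref{thm:main0} with an adaptation of Gou{\"e}zel's pivoting technique. First I would isolate the only two things that need to be proved. Say that a homeomorphism $h$ is in \emph{Schottky position} for a pair of disjoint finite unions of intervals $(U,V)$ if $h(S^1\setminus U)\subseteq V$ and $h^{-1}(S^1\setminus V)\subseteq U$; then, reading off Definition \ref{dfn:Schottky}, $(Z_n,Z_n')$ is a Schottky (ping-pong) pair as soon as $Z_n$ is in Schottky position for some $(U^n,V^n)$, $Z_n'$ is in Schottky position for some $(U'^n,V'^n)$, and the four sets $U^n,V^n,U'^n,V'^n$ are pairwise disjoint. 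So the content reduces to: (I) with probability $\ge 1-Ce^{-cn}$, $Z_n$ is in Schottky position for a pair $(U^n,V^n)$ drawn from a \emph{fixed finite} family of disjoint finite unions of intervals, and likewise for $Z_n'$; and (II) with probability $\ge 1-Ce^{-cn}$, the four distinguished sets are pairwise disjoint. From these the main inequality follows with $\kappa=\min(c,1/C)$, and the perturbation clause will come for free, as explained below.

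To produce the fixed family of domains I would apply Theorem \ref{thm:main0} to each semigroup $\llangle\supp\mu_i\rrangle$, obtaining a Schottky pair $(f_1^{(i)},f_2^{(i)})$ associated with finite unions of intervals; replacing these by suitable powers and products, I extract finitely many elements $g_1^{(i)},\dots,g_N^{(i)}$, each realized by a word in $\supp\mu_i$ of bounded length, all simultaneously in Schottky position for a common family of pairwise disjoint finite unions of intervals, nested in the usual ping-pong fashion (each $g_k^{(i)}$ maps everything outside its repelling domain, which contains all the other domains, into its attracting domain). Nondegeneracy of $\mu_i$ guarantees that a fixed block of increments of the walk realizes any prescribed $g_k^{(i)}$ with a probability bounded below at every step; call these the \emph{Schottky increments}. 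The core of the proof is then the pivoting construction: processing the increment sequence one step at a time, I build inductively a sequence of pivot times $0<\tau_1<\tau_2<\cdots$ with a choice of Schottky letter at each, recording the Schottky position locked in so far. Two estimates are needed: (a) at each step there is a probability bounded below of appending a new pivot, which is immediate from the lower bound on Schottky increments; and (b) the probability that an unfavorable increment erases the last $j$ pivots decays geometrically in $j$. Estimate (b) is where the disjointness of the Schottky domains does the work, replacing the Gromov-hyperbolic contraction estimates of Gou{\"e}zel's original setting: backtracking of depth $j$ forces the offending increment into a nested sequence of ever smaller sets, whose probabilities are bounded geometrically. Given (a) and (b), the standard stopping-time comparison yields $\#\{k:\tau_k\le n\}\ge cn$ with probability $\ge 1-Ce^{-cn}$, and the same bound controls how close $\tau_1$ is to $0$ and $\tau_\ell$ to $n$.

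From the pivot data I extract (I) and (II). The pivot chain confines $Z_n$ to a "Schottky cone": the first pivot $\tau_1$ fixes a repelling domain $U^n$ and the last pivot $\tau_\ell$ fixes an attracting domain $V^n$ (both members of the fixed finite family) such that $Z_n(S^1\setminus U^n)\subseteq V^n$ and $Z_n^{-1}(S^1\setminus V^n)\subseteq U^n$; this is (I). For (II), the disjointness of every repelling domain from every attracting domain is built into the Schottky family, which separates the attracting domain of a walk from its own repelling domain; and for the two \emph{independent} walks, the attracting domain of $Z_n$ is not determined by the randomness up to the penultimate pivot $\tau_{\ell-1}$ — conditionally, it is one of at least two disjoint attracting domains, each with probability bounded below — so the event that the four distinguished domains fail to be pairwise disjoint forces a chain of unfavorable choices at the trailing pivots of one of the walks, which by (a), (b) and independence has probability $\le Ce^{-cn}$. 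This establishes (II), hence the theorem.

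Finally, the perturbation statement is soft once the quantitative pivoting is in place: every constant above ($C$, $c$, $N$, the lower bound on Schottky increments, and the domains themselves) depends only on the fixed Schottky family — an open condition, hence stable under small perturbation of the generators — and on lower bounds of the shape $\mu_i^{\ast m}(\mathcal{O})\ge\varepsilon_0$ for finitely many fixed open sets $\mathcal{O}$. Since weak-$\ast$ convergence gives $\liminf_k\nu_k(\mathcal{O})\ge\nu(\mathcal{O})$ for open $\mathcal{O}$, these lower bounds persist, uniformly, on a weak-$\ast$ neighborhood $\mathcal{U}_i$ of $\mu_i$; rerunning the argument with any $\nu_i\in\mathcal{U}_i$ gives a uniform $\kappa$. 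The main obstacle is step three: pinning down the correct purely combinatorial notion of "pivot" and "Schottky position" (there is no metric on $\Homeo(S^1)$ to exploit) and proving the geometric decay of backtracking in (b), together with the conditional non-concentration feeding (II).
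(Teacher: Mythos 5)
Your proposal follows essentially the same route as the paper: the weak Tits alternative (Theorem~\ref{thm:main0}) gives a Schottky pair in each semigroup, this is amplified into a common family of Schottky domains (Lemma~\ref{lem:amplify}, packaged as $(S,\epsilon)$-admissibility in Corollary~\ref{cor:measureConvTarget}), a nondegeneracy/perturbation argument makes that admissibility an open condition, and the pivoting construction (Proposition~\ref{prop:pivotingPrep}) reduces $Z_n$ to a product $w_0 s_1 w_1\cdots s_{\lfloor\kappa n\rfloor}w_{\lfloor\kappa n\rfloor}$ with constant $w_i$'s nesting a median $\mathcal{I}$ and fresh Schottky-uniform $s_i$'s on each equivalence class. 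Your estimates (a) and (b) correspond to Lemma~\ref{lem:pivotGain} and Proposition~\ref{prop:pivotDistbn}.

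One imprecision worth flagging in your step (II): the backtracking estimate (b) controls the size of the pivot set but is not what yields disjointness, so ``by (a), (b) and independence'' overstates what those two estimates alone give. What is actually needed is the combinatorial counting bound of Lemma~\ref{lem:1segment}: for a \emph{fixed} $g\in\Homeo(S^1)$, at most $3\zeta^2 N\sqrt N$ of the $N^2$ pairs $(i,j)$ have $I_i\cap gJ_j\neq\emptyset$. The paper cashes this in through Lemma~\ref{lem:1segmentAction} and the six events $E_1,\ldots,E_6$ of Theorem~\ref{thm:freeSubgpEquiv}, which construct the four ping-pong domains as images under the first and second halves of the two pivoted products and check all pairwise disjointness conditions symmetrically (the fixed ``middle'' $g$ swallows the first halves of \emph{both} walks, and fresh Schottky randomness from both second halves provides the decay). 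Your sketch instead conditions on all of walk~2 and extracts decay from walk~1's trailing pivots alone; that can be made to work and rests on the same counting lemma, but ``conditionally one of at least two disjoint attracting domains, each with probability bounded below'' is far weaker than what you need — you must apply the counting bound at each of $\sim\kappa n$ pivots along the nested chain $J_{i_1}\supset J_{i_2}\supset\cdots$ to turn a constant avoidance probability into an exponential one. You do identify this (``conditional non-concentration feeding (II)'') as the main obstacle at the end, which is the right place to focus.
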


We next study the synchronization of random homeomorphisms of $S^{1}$. Let $f_{1}, f_{2}, \ldots, f_{m}$ be elements of $\Homeo_{+}(S^{1})$. Given $n>0$, each sequence $(\theta(1), \ldots, \theta(n)) \in \{1, \ldots, m\}^{n}$ gives rise to homeomorphism $f_{\theta(n)} \circ \cdots \circ f_{\theta(1)} \in \Homeo(S^{1})$. We can ask if the orbits of a random product of $f_{1}, \ldots, f_{m}$ are \emph{synchronized}, i.e., given any $x, y \in S^{1}$, if $(f_{\theta(n)} \circ \cdots \circ f_{\theta(1)}) (x)$ and $(f_{\theta(n)} \circ \cdots \circ f_{\theta(1)}) (y)$ gets closer as $n$ grows for ``most'' choices of $(\theta(1), \theta(2), \ldots)$.

In this context, the semigroup $G_{+}$ generated by $f_{1}, \ldots, f_{m}$ need not be a subgroup of $\Homeo_{+}(S^{1})$. Indeed, the semigroup $G_{+}$ generated by $f_{1}, \ldots, f_{m}$ and the semigroup $G_{-}$ generated by $f_{1}^{-1}, \ldots, f_{m}^{-1}$ can exhibit widely different dynamics (e.g. having distinct minimal sets). This motivates our Theorem \ref{thm:main0} that concerns semfigroups.

Assuming that the semigroup generated by $f_{1}, \ldots, f_{m}$ and the semigroup generated by $f_{1}^{-1}, \ldots, f_{m}^{-1}$ both act minimally on $S^{1}$, V. A. Antonov established the following alternatives \cite{MR756386}: either \begin{enumerate}
\item there exists a probability measure on $S^{1}$ preserved by each of $f_{1}, \ldots, f_{m}$ (and it follows that $f_{1}, \ldots, f_{m}$ are simultaneously conjugated to rotations), or  
\item there exists $g \in \Homeo(S^{1})$ of finite order commuting with each of $f_{1}, \ldots, f_{m}$, or 
\item for any i.i.d.s $\theta(1), \theta(2), \ldots$ whose supports are $\{1, \ldots, m\}$, for every pair of points $x, y \in S^{1}$ and for almost every infinite sequence $(\theta(1), \theta(2), \ldots)$, the distance between the trajectories $f_{\theta(n)} \cdots f_{\theta(1)} (x)$ and $f_{\theta(n)} \cdots f_{\theta(1)} (y)$ goes to 0 as $n$ tends to infinity.
\end{enumerate}
In the first case, $f_{i}$'s simultaneously preserve a metric on $S^{1}$ and distinct points are kept distant. In the second case, a global synchronization cannot be expected but a local synchronization can happen. In the third case, synchronization happens almost surely. An independent work was done by V. A. Kleptsyn and M. B. Nalskii under the additional assumption that $f_{1},\ldots, f_{m}$ generate a hyperbolic map \cite{kleptsyn2004convergence}.

Synchronization in random dynamical systems was also studied by Bertrand Deroin, Victor Kleptsyn and Andr{\'e}s Navas \cite[Proposition 5.2]{deroin2007sur-la-dynamique}, where the authors gave an exponential upper bound on the probability of non-synchronization. The authors then related synchronization in $\mu$-random walk with properties of the $\mu$-stationary measure such as non-atomness and uniqueness. See also \cite{navas2011groups} for further context and overview.

Synchronization was promoted into exponential synchronization by Dominique Malicet in the following form. Note that the minimality assumption is lifted.

\begin{thm}[{\cite[Theorem A]{malicet2017random}}]\label{thm:malicet}
Let $\mu$ be a probability measure on $\Homeo(S^{1})$ such that the semigroup $ \llangle \supp \mu \rrangle$ does not admit any invariant probability measure on $S^{1}$. Let $(Z_{n})_{n>0}$  be the (left) random walk generated by $\mu$. Then there exists $q \in (0, 1)$ such that for each $x \in S^{1}$ and for almost every random path $(Z_{n}(\w))_{n>0}$, there exists a neighborhood $I_{x, \w}$ of $x$ such that \[
\diam \big( Z_{n}(\w) (I_{x, \w}) \big) \le q^{n}
\]
for all $n \in \Z_{>0}$.
\end{thm}

We strengthen this result by providing an exponential bound for exponential synchronization: \begin{theorem}\label{thm:main2}
Let $\mu$ be a probability measure on $\Homeo(S^{1})$ such that the semigroup $\llangle \supp \mu \rrangle$ does not admit any invariant probability measure on $S^{1}$. Let $(Z_{n})_{n>0}$  be the (left) random walk generated by $\mu$. Then there exists $\kappa > 0$ such that for each $x \in S^{1}$, \begin{equation}\label{eqn:main2}
\Prob \left( \w: \begin{array}{c} \textrm{there exists an interval $I_{x, \w}$ containing $x$ such that} \\\textrm{ $\diam \big( Z_{k}(\w) (I_{x, \w}) \big) \le q^{k}$ for each $k \ge n$}\end{array}\right) \ge 1-\frac{1}{\kappa} e^{-\kappa n}
\end{equation}
for all $n \in \Z_{>0}$.

Furthermore, the constant $\kappa$ is stable under perturbation. That means, there exists a neighborhood $\mathcal{U}$ of $\mu$ in the space of probability measures on $\Homeo(S^{1})$ so that Inequality \ref{eqn:main2} holds for a uniform $\kappa > 0$ whenever $(Z_{n})_{n>0}$ is driven by some probability measure in $\mathcal{U}$.
\end{theorem}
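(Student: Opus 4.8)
The plan is to run Gou{\"e}zel's pivoting technique with the circle itself playing the role of a boundary and the contracting Schottky moves of Definition~\ref{dfn:Schottky} playing the role of Schottky directions; since upgrading the almost-sure Theorem~\ref{thm:malicet} to an exponential probability tail cannot be done by soft Borel--Cantelli-type arguments, the pivotal bookkeeping has to be carried out directly. The first step extracts, from the hypothesis that $\llangle\supp\mu\rrangle$ has no invariant probability measure, a finite family of \emph{contracting moves} with quantitative control: Theorem~\ref{thm:main0} produces a Schottky pair associated with finite unions of intervals, and post-composing with bounded words --- using the absence of an invariant measure once more, to push a prescribed neighborhood into a trapped region --- yields, uniformly over $x\in S^{1}$, boundedly many elements of $\llangle\supp\mu\rrangle$ that realize a ``capturing'' move taking a definite neighborhood of $x$ into a fixed safe zone and ``contracting'' moves each taking a safe zone into a zone of definite-factor-smaller diameter whose image again lies in a safe zone. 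Because the Schottky and trapping inclusions are open $C^{0}$ conditions, each such element sits in an open set $\mathcal{O}\subseteq\Homeo(S^{1})$ all of whose members perform the same move, and, $L$ denoting the (bounded) word length, the law of a block of $L$ consecutive increments gives $\mathcal{O}$ positive mass, since a product of $\mu$-positive neighborhoods of the syllables lands in $\mathcal{O}$ by continuity of composition. Grouping the increments of $Z_{n}$ into consecutive length-$L$ blocks, the walk therefore performs any prescribed move at a given block with probability at least some $p_{0}>0$, independently across blocks, with $p_{0}$ and all the combinatorial data independent of $x$.

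Following Gou{\"e}zel, I then build inductively a \emph{pivotal configuration}: a stack of block-times $p_{1}<p_{2}<\cdots$ carrying nested intervals $I^{(0)}\supseteq I^{(1)}\supseteq\cdots$, all containing $x$, where $p_{1}$ records a capturing move that places a definite neighborhood of $x$ in a safe zone under $Z_{p_{1}}$ and each later $p_{j}$ records a contracting move performed while the current image lies in the relevant safe zone, so that $\diam(Z_{p_{j}}I^{(j)})\le\lambda\,\diam(Z_{p_{j-1}}I^{(j-1)})$ for a fixed $\lambda\in(0,1)$. Blocks between pivots are arbitrary and may move the current image out of the safe zones, forcing one to pop pivots off the stack and backtrack; the crux is that the stack height dominates a random walk with strictly positive drift --- each block independently pushes a pivot with probability $\ge p_{0}$, while the probability of being forced into a long run of pops decays geometrically --- and that pivots deep in the stack are ``locked in'', i.e. eventually never popped. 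The pivoting estimate then furnishes $\epsilon,\kappa_{0},C>0$ independent of $x$ such that the number $M_{n}$ of permanent pivots realized by block-time $n$ obeys $\Prob(M_{n}\le\epsilon n)\le Ce^{-\kappa_{0}n}$, and I set $I_{x,\w}:=\bigcap_{j}I^{(j)}_{\infty}$, the intersection of the stabilized pivotal intervals, an interval containing $x$ defined almost surely.

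Applying a union bound to the tail estimate over $k\ge n$, off an event of probability at most $C'e^{-\kappa_{0}n}$ one has $M_{k}\ge\epsilon k$ for \emph{every} $k\ge n$ simultaneously; on this event, at time $k$ the permanent pivots occupy the bottom of the stack, so $Z_{k}(I_{x,\w})\subseteq Z_{k}(I^{(M_{k})}_{\infty})$ has undergone at least $\epsilon k$ contractions by $\lambda$ while the intervening blocks inflate it by at most a bounded factor per unit time, whence $\diam(Z_{k}(\w)I_{x,\w})\le C''\lambda^{\epsilon k}\le q^{k}$ once $q\in(0,1)$ is chosen with $\lambda^{\epsilon}<q$; absorbing the constant into the rate gives Inequality~\ref{eqn:main2}, and uniformity in $x$ is automatic because $p_{0},\epsilon,\lambda$ were fixed independently of $x$. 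For the perturbation statement, every input is an inequality asserting that the length-$L$ block law of the driving measure assigns mass at least $p_{0}/2$ to one of the finitely many open sets $\mathcal{O}$ fixed above; by the portmanteau theorem each is an open condition on the driving measure in the weak-$\ast$ topology, so their finite intersection yields the neighborhood $\mathcal{U}$ of $\mu$ on which the argument runs verbatim with a uniform $\kappa$.

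I expect the real difficulty to lie in Steps 1--2 together: arranging the trapping data so that the pivotal stack is genuinely governed by a positive-drift random walk --- that recovery from an unfavorable block costs only geometrically in the number of recent pops and never disturbs the deep, locked-in pivots --- even though a single non-pivotal homeomorphism of $S^{1}$ can expand a short interval arbitrarily, and doing this while working with finite unions of intervals for a \emph{semigroup} rather than with intervals for a group, so that ``the current image lies in a safe zone'' is the only control one ever has over the intervening compositions. The second delicate point is making the capturing move bring the \emph{prescribed} point $x$ into play with constants uniform in $x$, rather than merely producing contraction of some interval somewhere.
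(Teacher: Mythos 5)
Your high-level plan is aligned with the paper's: extract Schottky data from Theorem~\ref{thm:main0}, block the walk so each block performs a prescribed Schottky move with definite probability, run a Gou\"ezel-style pivotal stack, and conclude via an exponential tail. But there is a genuine gap at exactly the point you flagged as ``the real difficulty'' and then elided. The step that fails is the sentence ``while the intervening blocks inflate it by at most a bounded factor per unit time.'' This is false for $\Homeo(S^{1})$: a single homeomorphism drawn from $\mu$ can take an arbitrarily short interval to one of length arbitrarily close to $1$, and no moment hypothesis on $\mu$ controls this. With that claim gone, the conclusion $\diam(Z_{k}I_{x,\w})\le C''\lambda^{\epsilon k}$ does not follow from ``$\ge\epsilon k$ contractions and $\le(1-\epsilon)k$ bounded inflations.'' The paper circumvents the issue structurally rather than by estimation: Proposition~\ref{prop:pivotingPrep} decomposes $Z_{n}=w_{0}s_{1}w_{1}\cdots s_{m}w_{m}$ on a high-probability set \emph{with the property} $w_{i}\mathcal{I}\subseteq\mathcal{I}$, so the partial images $W_{0}\mathcal{I}\supseteq W_{0}s_{1}\mathcal{I}\supseteq W_{1}\mathcal{I}\supseteq\cdots$ form a monotone nested chain (this is the content of Lemma~\ref{lem:expShrink2}) and the non-pivotal pieces \emph{cannot} inflate anything --- they only fail to contract. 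To manufacture this nesting, the paper inserts a \emph{triple} Schottky syllable $r_{i}s_{i}t_{i}$ at each pivotal candidate, where $r_{i}$ and $t_{i}$ buffer the free $s_{i}$ against the surrounding $w$'s (Definition~\ref{dfn:pivot}, Lemma~\ref{lem:pivotAlign1step}); a single Schottky step per block, as in your sketch, does not produce $w_{i}\mathcal{I}\subseteq\mathcal{I}$ and hence does not tame the inflation.

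A second, related gap concerns the ``for all $k\ge n$'' quantifier. Your proposal sets $I_{x,\w}:=\bigcap_{j}I^{(j)}_{\infty}$, but the pivotal decomposition of Proposition~\ref{prop:pivotingPrep} is an object built at a fixed horizon $n$ (the equivalence classes, $w_{i}$'s and $P_{n}$ all depend on $n$); the permanent-pivot limit is not something the one-horizon pivoting produces, and even granting its existence, the at-time-$k$ contraction still leans on the failed inflation bound. The paper's proof of Theorem~\ref{thm:main3} (Section~\ref{subsect:localCont}) handles this by a dyadic multi-scale decomposition of the increment sequence into blocks $\Omega_{1},\Omega_{2},\ldots$ of geometrically growing length, applies Proposition~\ref{prop:pivotingPrep} independently in each block, and then chains the blocks via the events $F_{k}$ (tail of block $k+1$ composed with head of block $k$ still nests $\mathcal{I}$) and $End_{t}$ (contraction at the running time $t$); the interval $I_{x,\w}$ is then taken explicitly as a preimage of $\mathcal{I}$ under a middle segment of the first block, and the chain of $F_{k}$'s is what propagates the contraction to every $k\ge n$. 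Your union bound over $k\ge n$ is fine for the probability estimate, but without the monotone nesting and the chaining structure there is no single interval that the argument certifies is contracted at every step.
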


When the action of $\llangle \supp \mu \rrangle$ is proximal, we have a better control on $I_{x, \w}$: 

\begin{theorem}\label{thm:main3}
Let $\mu$ be a probability measure on $\Homeo(S^{1})$ such that $\llangle \supp \mu \rrangle$ does not fix any point in $S^{1}$ and acts on $S^{1}$ proximally. Let $(Z_{n})_{n>0}$  be the (left) random walk generated by $\mu$. Then there exists $\kappa > 0$ such that for each $x \in S^{1}$, \begin{equation}\label{eqn:main3}
\Prob \left( \w: \begin{array}{c} \textrm{there exists an interval $I_{x, \w}$ containing $x$ such that} \\\textrm{$\diam(I_{x, \w}) \ge 1 - q^{n}$ and $\diam \big( Z_{k}(\w) (I_{x, \w}) \big) \le q^{k}$ for each $k \ge n$}\end{array}\right) \ge 1-\frac{1}{\kappa} e^{-\kappa n}
\end{equation}
for all $n \in \Z_{>0}$.

Furthermore, the constant $\kappa$ is stable under perturbation. That means, there exists a neighborhood $\mathcal{U}$ of $\mu$ in the space of probability measures on $\Homeo(S^{1})$ so that Inequality \ref{eqn:main2} holds for a uniform $\kappa>0$ whenever $(Z_{n})_{n>0}$ is driven by some probability measure in $\mathcal{U}$.
\end{theorem}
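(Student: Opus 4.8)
The plan is to carry out the pivoting construction of the preceding sections exactly as in the proof of Theorem~\ref{thm:main2}, and then to use proximality to collapse the ``scattered'' exceptional set of that theorem into a single small arc; the clause $\diam(I_{x,\w})\ge 1-q^n$ is precisely this collapse. First I would dispose of a bookkeeping point: a proximal action of $\llangle\supp\mu\rrangle$ with no global fixed point admits no invariant probability measure. The maximal atoms of an invariant measure form a finite set permuted by every homeomorphism in $\supp\mu$, hence a finite orbit, which proximality forbids unless it is a single fixed point; and an atomless invariant measure would live on a proper closed invariant set, forcing a non-proximal picture (a finite orbit, an action by rotations on an invariant circle, or the preservation of the gap structure of a minimal Cantor set). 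Hence Theorem~\ref{thm:main0} applies in its proximal form and yields a Schottky pair $(w_1,w_2)\in\llangle\supp\mu\rrangle$ associated with \emph{genuine intervals} $U_1,U_2,V_1,V_2$, rather than with finite unions of intervals as in the general case.

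Next I would feed this Schottky system into the pivoting machinery. As in the proof of Theorem~\ref{thm:main2} one gets, for almost every path, pivot times $\tau_1<\tau_2<\cdots$ along which a definite ping-pong configuration is locked in and never undone, with $\#\{j:\tau_j\le n\}\ge cn$ with probability at least $1-\tfrac1{\kappa_0}e^{-\kappa_0 n}$. The new point is purely geometric: since the Schottky data are single intervals, the configuration retained after $j$ pivots is a \emph{nest of intervals} — it asserts that $Z_{\tau_j}$ carries the complement of a single arc into a single arc, both of diameter $\le\rho^{\,j}$ for a fixed $\rho<1$, the exponential shrinking coming (as in Malicet's argument) from the variability of the random pivot blocks applied to proper subintervals, while the ping-pong compatibility built into the pivoting keeps the image arc inside the contracting region of the next block, so the estimate propagates to all later times. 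Taking $q$ a suitable power of $\rho$, this gives: with probability at least $1-\tfrac1\kappa e^{-\kappa n}$ there is a single arc $U_n=U_n(\w)$ with $\diam U_n\le q^n$ and $\diam\big(Z_k(S^1\setminus U_n)\big)\le q^k$ for all $k\ge n$, and one may take $U_n$ decreasing in $n$ (or replace it by $\bigcup_{m\ge n}U_m$ after adjusting constants).

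Now fix $x\in S^1$. On the event above intersected with $\{x\notin U_n\}$ put $I_{x,\w}:=S^1\setminus U_n$: it is an interval containing $x$, with $\diam I_{x,\w}\ge 1-q^n$ and $\diam(Z_k I_{x,\w})\le q^k$ for all $k\ge n$, which is \eqref{eqn:main3}. To bound $\Prob(x\in U_n)$ I would use that $U_n$ decreases to the single repelling point $\xi(\w)$ fixed by the first pivot, whose law is a stationary measure $\nu$ for the reversed walk; the same pivot-concentration estimates that produced the shrinking above endow $\nu$ with a quantitative non-concentration (Hölder-type) bound, so $\Prob(x\in U_n)\le\nu(B(x,q^n))$ is exponentially small, and intersecting the two exponentially small bad events (renaming $\kappa$) gives \eqref{eqn:main3}. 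The stability of $\kappa$ under perturbation follows as for Theorems~\ref{thm:main1} and~\ref{thm:main2}: the words $w_1,w_2$ are fixed finite products of elements of $\supp\mu$, so for $\mu'$ in a small weak-$*$ neighborhood of $\mu$ these same blocks (or small perturbations of them) occur along $\mu'$-strings with probability bounded below uniformly and still form a Schottky system with intervals of comparable size, whence all pivot constants, and thus $\kappa$ and $q$, are uniform over the neighborhood.

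The main obstacle is the geometric and probabilistic content of the middle two paragraphs: verifying that proximality genuinely reduces the exceptional set to a \emph{single} arc (in contrast to the finite union of arcs forced by Theorem~\ref{thm:main0}, and hence by Theorem~\ref{thm:main2}, in the non-proximal case), controlling the diameter of that arc by $q^n$ through the nested ping-pong images, and — the most delicate point — extracting from the pivot machinery the quantitative non-concentration of the repelling point $\xi(\w)$ that makes $\Prob(x\in U_n)$ exponentially small for a prescribed $x$.
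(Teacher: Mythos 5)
Your overall strategy — run the pivoting machinery, exploit that proximality yields a Schottky set with an \emph{interval} median so that the locally contracted set at each step is a single arc, and read off both the diameter $\ge 1-q^n$ and the contraction — is the same as the paper's. But there are two places where what you have written does not yet amount to a proof, and at the second of them you go a genuinely different route from the paper that buys you an extra, unestablished dependency.

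First, the ``for all $k\ge n$'' quantifier is where all the work is, and you dispatch it in a single clause (``the estimate propagates to all later times''). The paper handles this by decomposing $\Omega=\cdots\times\Omega_3\times\Omega_2\times\Omega_1$ into dyadic blocks, invoking Proposition~\ref{prop:pivotingPrep} in each block separately to obtain, on an exponentially likely event $A^{(k)}$, a factorization $Z_{n(2^k-1)}Z^{-1}_{n(2^{k-1}-1)}=w^{(k)}_0 s^{(k)}_1 w^{(k)}_1\cdots s^{(k)}_{\kappa n 2^{k-1}}w^{(k)}_{\kappa n 2^{k-1}}$ with the nesting $w^{(k)}_i\mathcal I\subseteq\mathcal I$, and then controlling the hand-off from block $k$ to block $k+1$ via the events $F_k$ (Lemma~\ref{lem:1segmentAction}) and the pointwise bounds $End_t$ (Lemma~\ref{lem:expShrink2}). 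The nesting $W_n\mathcal I\supseteq W_{n+1}\mathcal I\supseteq\cdots$ alone does not propagate a \emph{rate}; you need the fresh Schottky randomness in each later block and you need to show that the good interval coming out of block $k$ is re-captured by the pivot structure in block $k+1$. This is precisely what $F_k$ does, and it cannot be waved away.

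Second, and more fundamentally, your treatment of the requirement ``$I_{x,\w}$ contains $x$''. You set $I_{x,\w}:=S^1\setminus U_n$ on the event $\{x\notin U_n\}$ and then bound $\Prob(x\in U_n)\le\nu(B(x,q^n))$, where $\nu$ is the law of the limiting repelling point of the reversed walk, asserting that $\nu$ satisfies a Hölder-type non-concentration bound. That last assertion is doing all the work and is nowhere proved; the paper does not establish Hölder regularity of $\nu$, and deriving it would itself be a substantial application of the pivoting machinery — so your argument smuggles in essentially another instance of the theorem you are trying to prove. The paper's own toolbox gives a much more direct route. The good interval that gets contracted is of the form $I_\w=\big(s^{(1)}_{0.5\kappa n+1}w^{(1)}_{0.5\kappa n+1}\cdots s^{(1)}_{\kappa n}w^{(1)}_{\kappa n}\big)^{-1}\mathcal I$, so $\{x\notin I_\w\}$ is exactly $\{\,s^{(1)}_{0.5\kappa n+1}w^{(1)}_{0.5\kappa n+1}\cdots s^{(1)}_{\kappa n}w^{(1)}_{\kappa n}\cdot x\notin\mathcal I\,\}$. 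Conditional on each equivalence class of the pivot partition the $s^{(1)}_i$ are Schottky-uniform, the $w^{(1)}_i$ nest $\mathcal I$, and Lemma~\ref{lem:expShrink4.5} (or equivalently Lemma~\ref{lem:expShrink4} applied to the reverted Schottky set $\check S$) gives $\Prob\big(x\in I_\w\mid\mathcal E\big)\ge 1-e^{-0.5\kappa n}$, which closes the gap with the same constants and no appeal to the stationary measure at all. If you insist on the stationary-measure route you must first prove the Hölder bound; until you do, this step is a genuine gap, not a shortcut.

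As a minor remark, your first paragraph (proximal $+$ no global fixed point $\Rightarrow$ no invariant probability measure) gestures at the right facts, but the case analysis for an atomless invariant measure (``forcing a non-proximal picture'') is loose; the clean argument is the one in the proof of Theorem~\ref{thm:proximalEquiv}: an atomless invariant measure would give an interval $[a,b]$ and a complementary interval each of positive mass, and proximality would push one of them onto a point, contradicting non-atomicity.
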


The statements in Theorem \ref{thm:main1}, \ref{thm:main2}, \ref{thm:main3} still hold even if the the step distributions for the random walk are independent but non-identical, as long as they are distributed according to measures chosen from $\mathcal{U}$ or $\mathcal{U}_{1}$ and $\mathcal{U}_{2}$, respectively. 

We also have an exponential bound for global synchronization for proximal actions, which strengthens \cite[Theorem E]{malicet2017random}.
\begin{theorem}\label{thm:main4}
Let $\mu$ be a probability measure on $\Homeo(S^{1})$ such that $\llangle \supp \mu \rrangle$ does not fix any point in $S^{1}$ and acts on $S^{1}$ proximally. Let $(Z_{n})_{n>0}$  be the random walk generated by $\mu$. Then there exists $\kappa > 0$ such that for each $x, y \in S^{1}$, \begin{equation}\label{eqn:main4}
\Prob\left( d(Z_{n} x, Z_{n}y) < e^{-\kappa n} \right) \ge 1-\frac{1}{\kappa} e^{-\kappa n}
\end{equation}
for all $n \in \Z_{>0}$.

Furthermore, the constant $\kappa$ is stable under perturbation. That means, there exists a neighborhood $\mathcal{U}$ of $\mu$ in the space of probability measures on $\Homeo(S^{1})$ so that Inequality \ref{eqn:main4} holds for a uniform $\kappa>0$ whenever $(Z_{n})_{n>0}$ is driven by an arbitrary measure in $\mathcal{U}$.
\end{theorem}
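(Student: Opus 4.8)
The plan is to obtain Theorem~\ref{thm:main4} as a short consequence of Theorem~\ref{thm:main3}, which already encapsulates the pivoting construction. A preliminary remark makes the reduction clean: for each \emph{fixed} $n$, reversing the order of the i.i.d.\ increments $g_1,\dots,g_n$ preserves their joint law, so the random homeomorphism $g_1\cdots g_n$ has the same distribution as $g_n\cdots g_1$; since the probability appearing in \eqref{eqn:main4} depends only on the law of the single variable $Z_n$ (not on the joint law of the whole trajectory), it is harmless to assume that $(Z_n)_{n>0}$ is the \emph{left} random walk generated by $\mu$, exactly the setting of Theorem~\ref{thm:main3}. Fix $x\neq y$ (the case $x=y$ is trivial) and let $\kappa_3>0$, $q\in(0,1)$ be the constants furnished by Theorem~\ref{thm:main3}.

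I would then apply Theorem~\ref{thm:main3} twice: once at the point $x$ and once at $y$. This produces events $E_x$ and $E_y$, each of probability at least $1-\tfrac1{\kappa_3}e^{-\kappa_3 n}$, on which there are intervals $I_{x,\w}\ni x$ and $I_{y,\w}\ni y$ whose complementary arcs have length at most $q^n$, with $\diam\big(Z_n(\w) I_{x,\w}\big)\le q^n$ and $\diam\big(Z_n(\w) I_{y,\w}\big)\le q^n$. On $E_x\cap E_y$, as soon as $q^n<\tfrac12$ the two complementary arcs, of total length at most $2q^n<1$, cannot cover $S^1$, so $I_{x,\w}\cap I_{y,\w}\neq\emptyset$; picking any $w_0$ in this intersection, the points $Z_n x$ and $Z_n w_0$ both lie in $Z_n I_{x,\w}$, whence $d(Z_n x,Z_n w_0)\le q^n$, and similarly $d(Z_n y,Z_n w_0)\le q^n$. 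The triangle inequality yields $d(Z_n x,Z_n y)\le 2q^n$ on $E_x\cap E_y$, an event of probability at least $1-\tfrac2{\kappa_3}e^{-\kappa_3 n}$.

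It remains to repackage the constants. Choosing $\kappa>0$ small enough — in particular $\kappa<\min\{\kappa_3,\,|\log q|\}$ — one arranges that $2q^n<e^{-\kappa n}$ and $\tfrac2{\kappa_3}e^{-\kappa_3 n}\le \tfrac1\kappa e^{-\kappa n}$ for all $n$ with $q^n<\tfrac12$, while $1-\tfrac1\kappa e^{-\kappa n}\le 0$ for the finitely many remaining $n$; combining cases gives $\Prob\big(d(Z_n x,Z_n y)<e^{-\kappa n}\big)\ge 1-\tfrac1\kappa e^{-\kappa n}$ for every $n$, which is \eqref{eqn:main4}. Perturbation stability is inherited for free: taking $\mathcal{U}$ to be the neighborhood of $\mu$ supplied by the perturbation clause of Theorem~\ref{thm:main3} and running the argument verbatim with any driving measure in $\mathcal{U}$ gives the same $\kappa$. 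The only genuine idea here, beyond quoting Theorem~\ref{thm:main3}, is in the middle step: one localizes $x$ and $y$ in their \emph{own} contracting intervals and exploits that two arcs of length exceeding $\tfrac12$ must overlap, rather than trying to force $y$ into the single contracting interval built around $x$ — the latter would run straight into the difficulty Theorem~\ref{thm:main3} is designed to overcome, namely controlling the location of the $q^n$-small ``repelling'' arc of $Z_n$ relative to a prescribed pair of points.
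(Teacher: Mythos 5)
Your proof is correct but follows a genuinely different route than the paper's. The paper proves Theorem~\ref{thm:main4} \emph{directly} from the pivoting machinery: Proposition~\ref{prop:pivotingPrep} furnishes a decomposition $Z_n = w_0 s_1 w_1 \cdots s_{\lfloor\kappa n\rfloor} w_{\lfloor\kappa n\rfloor}$; the proof of Theorem~\ref{thm:expShrink} then splits the Schottky factors in half, applies Lemma~\ref{lem:expShrink2} to the first half (so the median $\mathcal{I}$ is sent to an interval of size $N^{-\lfloor\kappa n\rfloor/8}$) and Lemma~\ref{lem:expShrink4.5} to the second half (so the pair $\{x,y\}$ lands inside that small interval with high probability), and finishes by summing over the measurable partition. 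Logically, in the paper Theorem~\ref{thm:main4} is established \emph{before} Theorem~\ref{thm:main3}: the local-contraction statement has a more delicate proof requiring the dyadic product-space decomposition of Section~\ref{subsect:localCont} in order to control \emph{all} steps $k\ge n$ simultaneously, not just a snapshot. You invert this order, taking Theorem~\ref{thm:main3} as a black box and extracting Theorem~\ref{thm:main4} from it by the pleasant geometric observation that two arcs whose complements each have length under $q^n < 1/2$ must intersect, followed by a triangle inequality through a common point. This is a valid implication (there is no circularity, since the paper's proof of Theorem~\ref{thm:main3} nowhere invokes Theorem~\ref{thm:main4}), and it is arguably more conceptual. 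What you lose relative to the paper's route is that you have invoked a strictly stronger and harder theorem to prove an easier one; what you gain is a clean illustration that local eventual contraction around \emph{every} point implies global exponential synchronization. Your reduction to the left random walk by reversing the order of the i.i.d.\ increments is exactly the remark the paper itself makes, and your constant-chasing at the end (shrink $\kappa$ so that the bound is vacuous for small $n$ and dominated for large $n$) is standard and sound.
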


In Theorem \ref{thm:main1} or \ref{thm:main4}, it is not important if the random walk is a right random walk or left random walk. Indeed, the estimate is a snapshot at step $n$. Note also that, as in \cite[Theorem A]{malicet2017random}, our results are concerned with homeomorphism groups and do not require higher regularity of the homeomorphisms. The only property of homeomorphisms of $S^{1}$ that we use is the following: if $g \in \Homeo(S^{1})$ and if $I$ and $J$ are nested intervals of $S^{1}$, then $gI$ and $gJ$ are also nested.

Our method is based on Gou{\"e}zel's pivoting technique, which was introduced in \cite{gouezel2022exponential} and led to a remarkable exponential estimate for random walks on Gromov hyperbolic spaces. There has been several attempts to generalize Gou{\"e}zel's technique to a broader setting (see \cite{choi2022random1}, \cite{chawla2022the-poisson}, \cite{peneau2025limit} for example), and this paper is in line with those efforts. We use Schottky dynamics exhibited by Schottky pairs of homeomorphisms to implement Gou{\"e}zel's pivoting time construction. It turns out that the 1-dimensionality of the ambient space is somehow crucial, but a more crucial thing is the nesting of the Schottky regions. Indeed, the particular choice of Lebesgue measure when measuring the diameter of intervals is not important. We have:

\begin{thm}\label{thm:main5}
The statement in Theorem \ref{thm:main2} and  \ref{thm:main3} hold even if the diameter $\diam(\cdot)$ is replaced with $\nu(\cdot)$ for an arbitrary probability measure $\nu$ on $S^{1}$. 
\end{thm}

Above, $\nu$ need not be absolutely continuous with respect to $\Leb$; it could be e.g., a measure concentrated on a Cantor set.

\begin{remark}\label{rem:Gromov}
Since the pivoting technique is originally developed for groups acting on Gromov hyperbolic spaces, the analogue of Theorem \ref{thm:main4} for Gromov hyperbolic spaces also hold. We state it for the record; we will not prove it here but it can be proven using the pivoting technique. Below, $(\cdot | \cdot)_{o}$ denotes the Gromov product based at $o$.

\begin{prop}\label{prop:main6}
Let $X$ be a Gromov hyperbolic space with basepoint $o$, let $G$ be a group of isometries of $X$, and let $\mu$ be a probability measure on $G$ such that $\llangle \supp \mu \rrangle$ and $\llangle \supp \mu' \rrangle$ contains two independent loxodromic isometries. Let $(Z_{n})_{n>0}$ be the (left) random walk generated by $\mu$. Then there exists $\kappa>0$ such that  \[
\Prob \left( \w : \begin{array}{c} \textrm{there exists $\xi = \xi(\w, n) \in \partial X$ such that $\big(Z_{k}(\w) \xi' \big| Z_{k}(\w)o\big)_{o} > \kappa k$}\\
\textrm{  for every $k \ge n$ and for every $\xi' \in \partial X$ such that $\big(\xi' \big| \xi\big)_{o} < \kappa n$} \end{array}\right) \ge 1 - \frac{1}{\kappa} e^{-\kappa n}
\]
for each $n > 0$.  Furthermore, the constant $\kappa$ is stable under perturbation of the measure.
\end{prop}

This can be considered as a counterpart to the main result of \cite{gouezel2022exponential}, which asserts that sample paths of a right random walk escapes to infinity with a linear speed outside a set of exponentially small probability.
\end{remark}

\subsection{Sharpness of the results}\label{subsection:sharp}

Theorem \ref{thm:main0} is concerned with subsemigroup $G$ of $\Homeo(S^{1})$. If $G$ does not have any invariant probability measure, then $G$ contains a Schottky pair associated with finite unions of intervals. Conversely, if $G$ preserves a probability measure on $S^{1}$, then $G$ cannot contain a Schottky pair. If the action of $G$ on $S^{1}$ is not proximal, then $G$ cannot contain a Schottky pair associated with intervals (see Theorem \ref{thm:titsAltGen} and \ref{thm:proximalEquiv}).

Theorem \ref{thm:main1} is concerned with probability measures $\mu_{1}, \mu_{2}$ whose supports do not have any invariant probability measure. The desired statement fails if, for example, $\supp \mu_{1}$ is allowed to preserve a probability measure. Indeed, if we consider any $\mu_{1}$ supported on a finite subgroup of $\Homeo(S^{1})$, then $Z_{n} = id$ holds infinitely often almost surely. In that case, $Z_{n}$ and $Z_{n}'$ do not comprise a ping-pong pair and do not generate a free subgroup of rank 2.

In Theorem \ref{thm:main2}, we assume that the support of $\mu$ does not have any invariant probability measure. The desired statement fails if $\supp \mu$ is allowed to preserve a probability measure. One counterexample is $\mu$ supported on a finite subgroup of $\Homeo(S^{1})$. Another counterexample is as follows. Fix a point $x \in S^{1}$ and identify $S^{1} \setminus \{x\}$ with $\mathbb{R}$ by a homeomorphism. Then the unit translation $\tilde{\tau} : t \mapsto t + 1$ on $\mathbb{R}$ induces a homeomorphism $\tau : S^{1} \rightarrow S^{1}$ fixing $x$, and generates a cyclic subgroup $\langle \tau \rangle$ of $\Homeo(S^{1})$. If we consider a symmetric nearest-neighbor random walk on $\langle \tau \rangle$, the random walk visits the identity element infinitely often almost surely. Hence, the desired eventual exponential contraction cannot happen almost surely. Note also that the action of $\langle \tau \rangle$ on $S^{1}$ is proximal. Hence, this also serves as a counterexample to Theorem \ref{thm:main3} and Theorem \ref{thm:main4} when there is no assumption about fixed point.

In Theorem \ref{thm:main2} and \ref{thm:main3}, it is important that the choice of $I_{x, \w}$ depends on the sample $\w$. It is easy to construct a random walk (say, a nearest-neighbor random walk on a surface group acting on $S^{1} = \partial \mathbb{H}^{2}$) such that for any nonempty open set $O$, there exists $\epsilon > 0$ such that \[
\Prob \big(\diam (Z_{n} \cdot O) > 1/2\big) > \epsilon
\]
for each $n \in \Z_{>0}$.

\subsection*{Acknowledgments}
The author thanks Hyungryul Baik and Sang-hyun Kim for their valuable comments on the Korean version of this paper. The author thanks Dominque Malicet and Mart{\'i}n Gilabert Vio for explaining the basics of circle homeomorphisms and clarifying the author's many confusions. Malicet and Gilabert Vio encouraged the author to consider non-degenerate random walks on subsemigroups (which are not necessarily subgroups) of $\Homeo(S^{1})$, which led to the current statements. The author is grateful to their suggestions. The author also thanks Gilabert Vio for many corrections and suggestions for the first draft of this paper. Lastly, the author is grateful to {\'E}tienne Ghys, Victor Kleptsyn and Andr{\'e}s Navas for their comments on the first draft of this paper and reference suggestions.

The author was partially supported by Mid-Career Researcher Program (RS-2023-00278510) through the National Research Foundation funded by the government of Korea. The first draft of the paper was written while the author was visiting the HCMC of KIAS.

\section{Preliminaries}\label{section:prelim}

In this paper, $S^{1}$ denotes the circle. We regard $S^{1}$ as the quotient of $\mathbb{R}$ by the translation $t \mapsto t+1$. This implicitly endows the Lebesgue measure $\Leb$ and the orientation on $S^{1}$. More explicitly, we say that distinct points $x_{1}, x_{2}, \ldots, x_{N} \in S^{1}$ are \emph{oriented counterclockwise} if there exist lifts $\tilde{x}_{1}, \ldots, \tilde{x}_{N}$ on $\mathbb{R}$ of $x_{1}, \ldots, x_{N}$, respectively, such that \[
\tilde{x}_{1} < \tilde{x}_{2} < \ldots < \tilde{x}_{N}, \quad \tilde{x}_{N} - \tilde{x}_{1} < 1.
\]
For distinct points $x, y \in S^{1}$, let $\tilde{x}$ and $\tilde{y}$ be lifts on $\mathbb{R}$ of $x$ and $y$, respectively, such that $\tilde{x} < \tilde{y} < \tilde{x} + 1$. We define the open interval $(x, y)$ in $S^{1}$ by the image of $(\tilde{x}, \tilde{y}) \subseteq \mathbb{R}$ on $S^{1}$ by the quotient map. Equivalently, \[
(x, y) := \big\{ z \in S^{1} : x, z, y\,\,\textrm{is oriented counterclockwise} \big\}.
\]
We similarly define the closed interval $[x, y]$ and the half-open interval $(x, y]$. We use $\sqcup$ to denote the disjoint union.

We denote by $\Homeo(S^{1})$ the group of homeomorphisms from $S^{1}$ to itself. A subset $G$ of $\Homeo(S^{1})$ is called a \emph{subsemigroup} if the composition is closed in $G$, i.e., $f\circ g \in G$ for each $f, g \in G$. We call $G$ a \emph{subgroup} of $\Homeo(S^{1})$ if it moreover satisfies that $f^{-1} \in G$ for each $f \in G$.

Let $A$ and $B$ be subsets of $S^{1}$. We say that $A$ and $B$ are \emph{essentially disjoint} if their closures $\bar{A}$ and $\bar{B}$ are disjoint. We say that $A$ \emph{essentially contains} $B$ if $\bar{B} \subseteq \inte A$.

Throughout the article, a \emph{neighborhood} of $A$ always refers to an open one, i.e., an open set containing $A$. We define the \emph{$\epsilon$-neighborhood of $A$} by \[
N_{\epsilon}(A) := \big\{ x \in S^{1} : \exists a \in A \,\,\textrm{such that} \,\,d(x, a) < \epsilon\big\}.
\]

Given a subset $A \subseteq S^{1}$, we denote by  $\zeta(A)$ the number of connected components of $A$ (possibly $+\infty$). Hence, $\zeta(A) = 1$ precisely when $A$ is connected, i.e., is a point, interval or $S^{1}$.

Let $G$ be a subset of $\Homeo(S^{1})$. We say that the action of $G$ on $S^{1}$ is \emph{proximal} if every pair of points on $S^{1}$ can be brought  close to each other by the action of $G$, i.e., \[
\inf_{g \in G} d(gx, gy) = 0 \,\,\textrm{for every $x, y \in S^{1}$}.
\]
We say that an interval $I \subseteq S^{1}$ is \emph{$G$-contractible} if it can be arbitrarily contracted by the action of $G$, i.e., $\inf_{g \in G} \diam (gI) = 0$.

We endow $\Homeo(S^{1})$ with the $C^{0}$-topology. Given subsets $A_{1}, \ldots, A_{n} \subseteq \Homeo(S^{1})$, we define their \emph{convolution} by means of the composition: \[
A_{1} \ast \cdots \ast A_{n} := \{ g_{1} \cdots g_{n} : g_{i} \in A_{i} \,\,\textrm{for each}\,\, i = 1, \ldots, n\}.
\]
We abbreviate the $A \ast \cdots \ast A$, the $n$-self-convolution of $A$, by $A^{\ast n}$.

In this paper, every probability measure on $\Homeo(S^{1})$ that we consider is a Borel probability measure. Given a probability measure $\mu$ on $\Homeo(S^{1})$, we define $\supp \mu$ to be the largest $\mu$-conull closed set on $\Homeo(S^{1})$. The convolution map on $\Homeo(S^{1})$ induces the convolution operation on probability measures. In general, for probability measures $\mu$ and $\nu$, we have \[
(\supp \mu) \ast (\supp \nu) \subsetneq \supp (\mu \ast \nu).
\]

We give the weak-$\ast$ topology on the space of probability measures on $\Homeo(S^{1})$. In this topology, a sequence $\mu_{i}$ of probability measures on $\Homeo(S^{1})$ converges to another measure $\mu$ if and only if $\lim_{i \rightarrow +\infty} \E_{\mu_{i}} (f) \rightarrow \E_{\mu} (f)$ for each bounded continuous function $f: \Homeo(S^{1}) \rightarrow \mathbb{R}$.

\section{Weak Tits alternative for semigroups}\label{section:weakTits}

The goal of this section is to prove the following:

\begin{thm}\label{thm:titsAltGen}
Let $G$ be a subsemigroup of $\Homeo(S^{1})$  that does not admit an invariant probability measure on $S^{1}$. Then $G$ contains a Schottky pair associated with finite unions of intervals. That means, there exists $f_{1}, f_{2} \in G$ and essentially disjoint open sets $U_{1}, U_{2}, V_{1}, V_{2}$ of $S^{1}$ that satisfy the following: \begin{enumerate}
\item each of $U_{1}, U_{2}, V_{1}, V_{2}$ has finitely many components;
\item $f_{1}(S^{1} \setminus U_{1}) \subseteq V_{1}$ and $f_{1}^{-1} (S^{1} \setminus V_{1}) \subseteq U_{1}$, and 
\item $f_{2}(S^{1} \setminus U_{2}) \subseteq V_{2}$ and $f_{2}^{-1} (S^{1} \setminus V_{2}) \subseteq U_{2}$.
\end{enumerate}
\end{thm}

\begin{thm}\label{thm:proximalEquiv}
Let $G$ be a subsemigroup of $\Homeo(S^{1})$. Then the following are equivalent: 
\begin{enumerate}
\item $G$ acts on $S^{1}$ proximally and does not have a global fixed point on $S^{1}$.
\item $G$ acts on $S^{1}$ proximally and every $G$-orbit is infinite, i.e., $\# Gx = +\infty$ for each $x \in S^{1}$.
\item $G$ acts on $S^{1}$ proximally and does not preserve a probability measure on $S^{1}$.
\end{enumerate}
The above equivalent conditions imply that:
\begin{enumerate}\setcounter{enumi}{3}
\item $G$ has a Schottky pair associated with intervals, i.e., there exists $f_{1}, f_{2} \in G$ and essentially disjoint open intervals $I_{1}, I_{2}, J_{1}, J_{2}$ in $S^{1}$ such that \begin{equation}\label{eqn:SchottkyPair}
f_{i}(S^{1} \setminus I_{i}) \subseteq J_{i}, \,\,f_{i}^{-1} (S^{1} \setminus J_{i})  \subseteq I_{i} \quad (i=1, 2).
\end{equation}
\end{enumerate}
\end{thm}

Condition (4) above is actually equivalent to Condition (1)--(3). We will prove this in Section \ref{section:SchottkyRandom}.

We first prove that each point of $S^{1}$ has a $G$-contractible neighborhood unless $G$ preserves a probability measure.

\begin{thm}\label{thm:semigpInvProb}
Let $G$ be a subsemigroup of $\Homeo(S^{1})$ and let $x \in S^{1}$. Suppose that $x$ does not have a $G$-contractible neighborhood, and suppose that each $G$-orbit is infinite. Then $G$ has an invariant probability measure.
\end{thm}

\begin{proof}
From the assumption, we observe: \begin{claim}\label{claim:semigpInvFirst}
For each open interval $I$ containing  $x$, there exists $\delta= \delta(I)>0$ such that   $\diam(gI) > \delta$ for all $g \in G$.
\end{claim}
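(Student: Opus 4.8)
The plan is to observe that Claim \ref{claim:semigpInvFirst} is, once the relevant definitions are unwound, essentially a restatement of the standing hypothesis. Recall that an interval $J\subseteq S^{1}$ is \emph{$G$-contractible} precisely when $\inf_{g\in G}\diam(gJ)=0$, and that every neighborhood $U$ of $x$ — being an open set containing $x$ — contains an open interval $I$ with $x\in I\subseteq U$, for which $\diam(gI)\le\diam(gU)$ for all $g$. Consequently, $x$ admits a $G$-contractible neighborhood if and only if some open interval containing $x$ is $G$-contractible. The assumption that $x$ has \emph{no} $G$-contractible neighborhood therefore says exactly that $\inf_{g\in G}\diam(gI)>0$ for every open interval $I$ with $x\in I$.

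Granting this, the argument is short. First I would fix an arbitrary open interval $I$ containing $x$. Since $I$ is in particular a neighborhood of $x$, it fails to be $G$-contractible, so the number $m(I):=\inf_{g\in G}\diam(gI)$ is strictly positive. Set $\delta(I):=m(I)/2>0$. Then for every $g\in G$ one has $\diam(gI)\ge m(I)>\delta(I)$, which is precisely the assertion of the claim. The only place any care is needed is the strict inequality demanded in the statement: a priori the infimum $m(I)$ might be attained by some $g\in G$, so one cannot simply take $\delta(I)=m(I)$; halving it is the cleanest remedy. Equivalently, one may argue by contraposition — were no such $\delta$ to exist, there would be $g_{n}\in G$ with $\diam(g_{n}I)\to 0$, exhibiting $I$ itself as a $G$-contractible neighborhood of $x$, contrary to hypothesis.

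I do not anticipate any genuine obstacle in this claim. Note that the infinite-orbit hypothesis of Theorem \ref{thm:semigpInvProb} plays no role here; it is needed only afterwards, when the uniform lower bounds $\delta(I)$ produced by the claim are combined with the infiniteness of $G$-orbits to manufacture a $G$-invariant probability measure on $S^{1}$. So the "hard part" of the surrounding proof lies downstream of this claim, not in it: the claim merely records the quantitative non-contraction statement that the subsequent construction will feed on.
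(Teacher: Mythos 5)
Your proof is correct and takes the same route as the paper, which simply notes that the claim is a restatement of the standing hypothesis that $I$ is not $G$-contractible. Your additional care about achieving strict inequality by halving the infimum, and your observation that the infinite-orbit hypothesis is unused here, are both accurate but do not change the substance.
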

This is just a rephrasing of the fact that $I$ is not $G$-contractible.

\begin{claim}\label{claim:semigpInvSecond}
For each $y \in S^{1}$ and $N>0$, there exists  $\epsilon =\epsilon(N, y)>0$ such that the following holds. For each neighborhood $I$ of $y$ whose diameter is smaller than $\epsilon$, there exist $N$ elements $g_{1}, \ldots, g_{N}$ of $G$ such that $g_{1} I, \ldots, g_{N} I$ are disjoint.
\end{claim}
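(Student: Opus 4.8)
The plan is to prove this claim using only the standing hypothesis that every $G$-orbit is infinite; the assumption that $x$ has no $G$-contractible neighborhood plays no role here. The underlying idea is elementary: pick $N$ distinct points in the orbit $Gy$, surround them by pairwise disjoint metric balls, and use continuity of the finitely many maps realizing those orbit points to conclude that every sufficiently small neighborhood of $y$ is pushed into the corresponding ball.

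In detail, I would first use $\#Gy = +\infty$ to select $g_{1}, \ldots, g_{N} \in G$ so that the points $p_{i} := g_{i} y$ are pairwise distinct, and put $\delta := \tfrac{1}{3}\min_{i \neq j} d(p_{i}, p_{j}) > 0$; then the neighborhoods $N_{\delta}(p_{1}), \ldots, N_{\delta}(p_{N})$ are pairwise disjoint. Since each $g_{i}$ is a homeomorphism with $g_{i} y = p_{i}$, the set $g_{i}^{-1}\big(N_{\delta}(p_{i})\big)$ is an open neighborhood of $y$, and hence so is the finite intersection $\bigcap_{i=1}^{N} g_{i}^{-1}\big(N_{\delta}(p_{i})\big)$. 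I then choose $\epsilon = \epsilon(N, y) > 0$ small enough that $N_{\epsilon}(y) \subseteq \bigcap_{i=1}^{N} g_{i}^{-1}\big(N_{\delta}(p_{i})\big)$.

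To finish, let $I$ be any neighborhood of $y$ with $\diam(I) < \epsilon$. Since $y \in I$, every $z \in I$ satisfies $d(z, y) \le \diam(I) < \epsilon$, so $I \subseteq N_{\epsilon}(y)$ and therefore $g_{i} I \subseteq N_{\delta}(p_{i})$ for every $i$. Because the $N_{\delta}(p_{i})$ are pairwise disjoint, so are $g_{1} I, \ldots, g_{N} I$, which is exactly the assertion. I do not expect a genuine obstacle in this argument; the only point deserving a line of justification is the trivial remark that an open set of diameter less than $\epsilon$ containing $y$ lies inside the $\epsilon$-ball around $y$. It is worth noting that the chosen $g_{i}$ work for all admissible $I$ simultaneously, so the claim in fact holds in the slightly stronger form where the $N$ elements do not depend on $I$.
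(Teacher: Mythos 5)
Your proof is correct and follows essentially the same route as the paper: choose $g_1,\dots,g_N$ sending $y$ to distinct orbit points, separate those points by disjoint balls, pull back through the continuous maps $g_i$, and take $\epsilon$ so that $N_\epsilon(y)$ lies in the intersection of the preimages. The only cosmetic differences are the explicit factor $\tfrac{1}{3}$ and the (correct) remark that the chosen $g_i$ work uniformly for all admissible $I$, which is also implicit in the paper's argument.
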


\begin{proof}
Since the $G$-orbit of $y$ is infinite, we can take $N$ elements $g_{1}, \ldots, g_{N} \in G$ such that $g_{1} y, \ldots, g_{N}y$ are distinct. Then their $\eta$-neighborhoods are also disjoint for some small enough $\eta$. Since each $g_{i}$ is continuous, $\cap g_{i}^{-1} N_{\eta}(g_{i}y)$ is an open neighborhood of $y$. Let $\epsilon$ be such that $N_{\epsilon}(y)$ is contained in this open neighborhood. Now, let $I$ be a neighborhood of  $y$ whose diameter smaller than $\epsilon$. Then $I$ is contained in $N_{\epsilon}(y)$, and it is clear that $g_{1} I, \ldots, g_{N} I$ are disjoint. 
\end{proof}

Meanwhile, consider an interval neighborhood $I$ of  $x$. Claim \ref{claim:semigpInvFirst} provides us with some $\delta = \delta( I) > 0$ such that $\diam(g I) \ge \delta$ for any $g \in G$. Hence, the maximum number of mutually disjoint $G$-translates of $I$ is at most $1/\delta$. For an interval $I \subseteq S^{1}$ and a subset $A \subseteq S^{1}$, we define \begin{equation}\label{eqn:disjtTrans}
N(A; I) := \sup \left\{ \# S : \begin{array}{c}\textrm{$S\subseteq G$, $gI \subseteq A$ for each $g \in S$}, \\
\textrm{ $g I \cap g' I = \emptyset$ for distinct elements $g, g' \in S$}\end{array}\right\}.
\end{equation}
The following is immediate:
\begin{claim}\label{claim:semigpInvThird}
For each open interval $I$ containing $x$ and for each subset $A$ of $S^{1}$, $N(A; I)$ defined above is finite.
\end{claim}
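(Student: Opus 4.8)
The plan is to read off finiteness of $N(A;I)$ directly from Claim \ref{claim:semigpInvFirst}, combined with the fact that $\Leb(S^{1}) = 1$. First I would fix an open interval $I$ containing $x$ and invoke Claim \ref{claim:semigpInvFirst} to produce $\delta = \delta(I) > 0$ such that $\diam(gI) \ge \delta$ for every $g \in G$.

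The one ingredient to add is the passage from diameter to Lebesgue measure for arcs: since each $g \in \Homeo(S^{1})$ carries the interval $I$ to an interval $gI$, and for any arc $J \subseteq S^{1}$ one has $\Leb(J) \ge \diam(J)$ (take $p, q \in \bar{J}$ realizing the diameter; one of the two arcs of $S^{1}$ between $p$ and $q$ lies inside $\bar{J}$, and since $d(p,q) \le 1/2$ its length is at least $d(p,q) = \diam(J)$), we get $\Leb(gI) \ge \delta$ for all $g \in G$. This is the only place where the $1$-dimensionality of $S^{1}$ enters, and it is routine.

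Finally, if $S \subseteq G$ is any family witnessing the supremum in \eqref{eqn:disjtTrans}, so that the intervals $\{gI : g \in S\}$ are pairwise disjoint (and contained in $A$), then disjointness and additivity of $\Leb$ give
\[
\delta \cdot \# S \le \sum_{g \in S} \Leb(gI) \le \Leb(S^{1}) = 1,
\]
hence $\# S \le 1/\delta$. Taking the supremum over all such $S$ yields $N(A; I) \le 1/\delta < \infty$; in particular the supremum is attained. I expect no genuine obstacle here: the substantive point was already isolated in Claim \ref{claim:semigpInvFirst}, and what remains is the elementary measure bound above.
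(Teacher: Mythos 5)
Your proof is correct and follows the same route as the paper: fix $\delta = \delta(I)$ from Claim~\ref{claim:semigpInvFirst}, and bound the number of pairwise disjoint translates of $I$ by $1/\delta$ using additivity of Lebesgue measure. The paper states this bound as immediate; you merely fill in the (correct and routine) observation that $\Leb(J) \ge \diam(J)$ for an arc $J \subseteq S^{1}$, which is the step needed to pass from the diameter lower bound to the measure-theoretic packing bound.
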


Let us now take an open neighborhood basis $\{I_{n}\}_{n>0}$ of $x$, i.e.,  \[
I_{1} \supseteq I_{2} \supseteq \ldots \searrow \{x\}.
\]
We then consider the set of binary rationals \[
\mathcal{D} := \{ 2^{-n} k : n>0, k=1, \ldots, 2^{n}\}
\]
and the collection of dyadic half-open intervals: \[
\mathcal{E} := \{ (a, b] : a, b \in \mathcal{D} \}.
\]
(For convenience, we include $\emptyset, S^{1} \in \mathcal{E}$.) For each dyadic half-open interval $A \in \mathcal{E}$, the sequence \[
\left( \frac{N(A; I_{n})}{N(S^{1}; I_{n})} \right)_{n > 0} = \left( \frac{N(A; I_{1})}{N(S^{1}; I_{1})} , \, \frac{N(A; I_{2})}{N(S^{1}; I_{2})},  \,\frac{N(A; I_{3})}{N(S^{1}; I_{3})}, \ldots \right)
\]
is well-defined thanks to Claim \ref{claim:semigpInvThird} and is bounded between 0 and 1. Hence, up to replacing $(I_{n})_{n>0}$ with its subsequence, $(N(A; I_{n})/N(S^{1};I_{n}))$ converges. Since $\mathcal{E}$ is countable, we can subsequently take convergent subsequences for each $A \in \mathcal{E}$. As a result, for a suitable interval neighborhood basis  $(I_{n})_{n>0}$ of $x$, we can guarantee that 
\[
\lim_{n \rightarrow +\infty} \frac{N(A; I_{n})}{N(S^{1}; I_{n})} =: \mu(A)\,\,\textrm{exists for each $A \in \mathcal{E}$.}
\]

Here, $\mu$ is not yet to be called a Borel measure. Still, we observe\begin{enumerate}
\item monotonicity: If $A, B \in \mathcal{E}$ satisfies $A \subseteq B$, then $\mu(A) \le \mu(B)$ also holds.
\end{enumerate}

To discuss finite additivity, let $A_{1}, A_{2} \in \mathcal{E}$ be disjoint elements of $\mathcal{E}$ and suppose that $A := A_{1} \sqcup A_{2}$ belongs to $\mathcal{E}$. Up to relabelling $A_{1}$ and $A_{2}$, we can write as $A_{1} = (a, c]$ and $A_{2} = (c, b]$ for some $a, b \in S^{1}$ and $c \in A$. (Here, we allow the possibility that $a = b$ and $A = S^{1}$.) 

We now claim for each $n$ that  \[
N(A_{1}; I_{n}) + N(A_{2}; I_{n}) \le N(A; I_{n}) \le N(A_{1}; I_{n}) + N(A_{2}; I_{n}) + 2.
\]
First, if $g_{1} I_{n}, \ldots, g_{k} I_{n}$ are disjoint translates of $I_{n}$ in $A_{1}$ and $h_{1} I_{n}, \ldots, h_{l} I_{n}$ are disjoint translates of $I_{n}$ in $A_{2}$, then $g_{1} I_{n}, \ldots, g_{k} I_{n}, h_{1} I_{n}, \ldots, h_{l} I_{n}$ are disjoint translates of $I_{n}$ in $A$. This explains the first inequality. Next, let $u_{1} I_{n}, \ldots, u_{m} I_{n}$ be disjoint translates of $I_{n}$ in $A$. Since these translates are disjoint, at most one can contain $c$. Next, at most one can contain $a$ or $b$, which is possible only when $a=b$ and $A = S^{1}$. Except these at most two translates, all the other translates are contained in either $A_{1}$ or $A_{2}$. This explains the second inequality.

Meanwhile, we observed in Claim \ref{claim:semigpInvSecond} that $N(S^{1}; I_{n})$ grows indefinitely as $n$ tends to infinity. Hence, we conclude \[
0 \le \big|\mu(A_{1}) + \mu(A_{2}) - \mu(A) \big| = \lim_{n \rightarrow +\infty}\left| \frac{N(A_{1}; I_{n}) + N(A_{2}; I_{n}) - N(A; I_{n}) }{N(S^{1}; I_{n})} \right| \le \lim_{n \rightarrow +\infty}\left| \frac{2}{N(S^{1}; I_{n})} \right| =0.
\]

Inducting on the number of summands, we conclude
\begin{enumerate}\setcounter{enumi}{1}
\item finite additivity: if some finitely many elements $A, A_{1}, \ldots, A_{n}$ of $\mathcal{E}$ satisfy \[
A = A_{1} \sqcup \ldots \sqcup A_{n},
\]
then  $\mu(A) = \mu(A_{1}) + \ldots + \mu(A_{n})$ holds.
\end{enumerate}

Note also that $\mathcal{E}$ is a semiring of sets. From the monotonicity and the finite additivity, we deduce
\begin{enumerate}\setcounter{enumi}{2}
\item finite subadditivity: if some finitely many elements $A, A_{1}, \ldots, A_{n}$ of $\mathcal{E}$ satisfy \[
A \subseteq  A_{1} \cup \ldots \cup A_{n},
\]
then $\mu(A) \le \mu(A_{1}) + \ldots + \mu(A_{n})$ holds.
\end{enumerate}

We now discuss some sort of absolute continuity. (Note: this is not the absolute continuity as measures. See Remark \ref{rem:absCtsNo}.)

\begin{claim}\label{claim:absCts}
For each $\eta>0$ there exists $\epsilon>0$ such that for every $\epsilon$-short interval $I \subseteq S^{1}$  and for every $k \in \Z_{>0}$, we have $N(I; I_{k})/ N(S^{1}; I_{k}) < \eta$. In particular, if $I$ is a dyadic half-open interval whose length is at most $\epsilon$, then $\mu(I)$ is smaller than $\eta$.
\end{claim}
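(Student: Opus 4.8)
The plan is to leverage Claim \ref{claim:semigpInvSecond} \emph{uniformly} across $S^{1}$ by a compactness argument, and then amplify disjoint configurations by ``multiplying'' them. Given $\eta>0$, the first move is to fix an integer $N$ with $1/N<\eta$; it then suffices to produce $\epsilon>0$ such that $N(I;I_{k}) \le \tfrac{1}{N} N(S^{1};I_{k})$ for every interval $I$ with $\diam I \le \epsilon$ and every $k\in\Z_{>0}$, since $N(S^{1};I_{k})\ge 1$ and $\mu(I)$ is by definition the limit of these ratios.

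\textbf{Step 1: a uniform form of Claim \ref{claim:semigpInvSecond}.} The proof of Claim \ref{claim:semigpInvSecond} actually yields more than its statement: for each $y\in S^{1}$ it produces $\epsilon_{y}>0$ and elements $g_{1}^{y},\dots,g_{N}^{y}\in G$ such that $g_{1}^{y}N_{\epsilon_{y}}(y),\dots,g_{N}^{y}N_{\epsilon_{y}}(y)$ are pairwise disjoint, so that for \emph{every} subset $I\subseteq N_{\epsilon_{y}}(y)$ the translates $g_{1}^{y}I,\dots,g_{N}^{y}I$ are pairwise disjoint. I would cover $S^{1}$ by the open sets $\{N_{\epsilon_{y}/2}(y)\}_{y\in S^{1}}$, extract a finite subcover based at $y_{1},\dots,y_{r}$, and set $\epsilon := \tfrac{1}{4}\min_{i}\epsilon_{y_{i}}$. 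A short triangle-inequality argument then shows that any interval $I$ with $\diam I\le\epsilon$ is contained in $N_{\epsilon_{y_{i}}}(y_{i})$ for some $i$, so $g_{1}^{y_{i}}I,\dots,g_{N}^{y_{i}}I$ are pairwise disjoint.

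\textbf{Step 2: amplification.} Fix such an $I$ and a $k\in\Z_{>0}$. By Claim \ref{claim:semigpInvThird} the number $m:=N(I;I_{k})$ is finite (as $x\in I_{k}$), so there exist $h_{1},\dots,h_{m}\in G$ with $h_{1}I_{k},\dots,h_{m}I_{k}$ pairwise disjoint and contained in $I$. I claim the $Nm$ sets $g_{j}^{y_{i}}h_{\ell}I_{k}$ ($1\le j\le N$, $1\le\ell\le m$) are pairwise disjoint: for fixed $j$ they are disjoint in $\ell$ because $g_{j}^{y_{i}}$ is injective, while for $j\ne j'$ they lie in the disjoint sets $g_{j}^{y_{i}}I$ and $g_{j'}^{y_{i}}I$. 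Since these translates are nonempty and pairwise disjoint, the $Nm$ elements $g_{j}^{y_{i}}h_{\ell}\in G$ are distinct, whence $N(S^{1};I_{k})\ge Nm=N\cdot N(I;I_{k})$, i.e. $N(I;I_{k})/N(S^{1};I_{k})\le 1/N<\eta$. Letting $k\to\infty$ gives $\mu(I)\le 1/N<\eta$ for every dyadic half-open $I$ of length at most $\epsilon$.

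The only genuine obstacle is Step 1: Claim \ref{claim:semigpInvSecond} is phrased for a single point, so one has to reread its proof to extract a \emph{fixed} finite list of semigroup elements that works for an entire neighborhood, and then glue these local data together using compactness of $S^{1}$. Once that uniformity is in hand, Step 2 is just bookkeeping with images of disjoint sets under homeomorphisms.
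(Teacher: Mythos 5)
Your proof is correct and follows essentially the same amplification strategy as the paper: both arguments use Claim \ref{claim:semigpInvSecond} together with a compactness step to obtain a uniform $\epsilon$, then produce $N \cdot N(I;I_{k})$ pairwise disjoint $G$-translates of $I_{k}$ in $S^{1}$ to conclude $N(I;I_{k})/N(S^{1};I_{k}) \le 1/N$. A small simplification worth noting: the paper applies Claim \ref{claim:semigpInvSecond} as stated to the covering dyadic intervals $J_{y}$, invokes the Lebesgue covering lemma for the uniform $\epsilon$, and then reduces the case $I \subseteq J_{y}$ to $I = J_{y}$ by monotonicity of $A \mapsto N(A;I_{k})$ — which sidesteps the need in your Step 1 to reopen the proof of Claim \ref{claim:semigpInvSecond} and extract the stronger ``fixed $g_{1},\dots,g_{N}$ for a whole ball'' form.
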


\begin{proof}
Let $\eta>0$ and pick an integer $N$ greater than $1/\eta$. For each $y \in S^{1}$ there exists $\epsilon(y, N)$ as in Claim \ref{claim:semigpInvSecond}. We then pick a $\epsilon(y, N)$-short dyadic interval $J_{y}$ whose interior contains $y$. Then $\{\inte J_{y}\}_{y \in S^{1}}$ becomes an open cover of $S^{1}$. By the Lebesgue covering lemma, there exists $\epsilon$ such that any  $\epsilon$-short interval is contained in some $J_{y}$. It now suffices to check the claim for $I= J_{y}$ for each $y$.

To this end, let us fix $y \in S^{1}$. By Claim \ref{claim:semigpInvSecond}, there exists $g_{1}, \ldots, g_{N} \in G$ such that $g_{1} J_{y}, g_{2} J_{y}, \ldots, g_{N} J_{y}$ are disjoint. Now let $k$ be an integer, and pick $h_{1}, \ldots, h_{N(J_{y}; I_{k})}$ such that $h_{1} I_{k}, \ldots, h_{N(J_{y}; I_{k})} I_{k}$ are disjoint subsets of $J_{y}$. Then  \[
\{ g_{l} h_{i} I_{k}: l=1, \ldots, N, i = 1, \ldots, N(J_{y}; I_{k}) \}
\]
become $N \cdot N(J_{y}; I_{k})$ disjoint $G$-translates of $I_{k}$ in $S^{1}$. This implies that  $N(S^{1}; I_{k}) \ge N \cdot N(J_{y}; I_{k})$ and $N(J_{y}; I_{k}) / N(S^{1}; I_{k}) \le 1/N$. By sending $k$ to infinity, we conclude that $\mu(J_{y}) \le 1/N < \eta$.
\end{proof}

Let us now prove the following countable additivity of $\mu$: \begin{claim}\label{claim:countableAdd}
If some finitely many elements $A, A_{1}, A_{2}, \ldots$ of $\mathcal{E}$ satisfy \[
A = \sqcup_{i=1}^{\infty} A_{i},
\]
then $\mu(A) = \sum_{i=1}^{\infty} \mu(A_{i})$ holds.
\end{claim}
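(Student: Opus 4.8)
The plan is to establish the two inequalities $\mu(A)\ge\sum_{i}\mu(A_{i})$ and $\mu(A)\le\sum_{i}\mu(A_{i})$ separately. The first is elementary, and the second is the standard ``premeasure extends to a measure'' compactness argument, powered by the absolute continuity of Claim \ref{claim:absCts}. Throughout one must stay inside the semiring $\mathcal{E}$, since $\mu$ has only been defined on $\mathcal{E}$; this dictates the shape of the argument.

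\emph{Lower bound.} Fix $N$. The sets $A_{1},\ldots,A_{N}$ are pairwise disjoint elements of $\mathcal{E}$ contained in $A$. For each $k$, pick finite collections $S_{i}\subseteq G$ realizing $N(A_{i};I_{k})$ (the sup is attained, being finite by Claim \ref{claim:semigpInvThird}). The elements of $S_{1}\sqcup\cdots\sqcup S_{N}$ are pairwise distinct in $G$ and their translates of $I_{k}$ are pairwise disjoint and contained in $A$, disjointness across different indices being automatic since the $A_{i}$ are disjoint. Hence $N(A;I_{k})\ge\sum_{i=1}^{N}N(A_{i};I_{k})$; dividing by $N(S^{1};I_{k})$ and letting $k\to\infty$ gives $\mu(A)\ge\sum_{i=1}^{N}\mu(A_{i})$, and then $N\to\infty$ gives $\mu(A)\ge\sum_{i=1}^{\infty}\mu(A_{i})$. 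In particular the series on the right converges.

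\emph{Upper bound.} Fix $\eta>0$. Write $A=(a,b]$ with $a,b$ dyadic; the cases $A=\emptyset$ and $A=S^{1}$ are easier (for $S^{1}$ the compact set $K$ below is all of $S^{1}$, so no shrinking is needed). By Claim \ref{claim:absCts}, choose a dyadic $a'\in(a,b)$ with $\mu((a,a'])<\eta$; then by finite additivity $\mu(A)=\mu((a,a'])+\mu((a',b])<\eta+\mu((a',b])$, and $K:=[a',b]$ is a compact subset of $A=\bigsqcup_{i}A_{i}$. For each $i$, write $A_{i}=(a_{i},b_{i}]$ and, again by Claim \ref{claim:absCts}, choose a dyadic $c_{i}>b_{i}$ so close to $b_{i}$ that the open interval $\tilde A_{i}:=(a_{i},c_{i})$, which contains $A_{i}$, lies in the dyadic half-open interval $A_{i}^{+}:=(a_{i},c_{i}]\in\mathcal{E}$ with $\mu(A_{i}^{+})=\mu(A_{i})+\mu((b_{i},c_{i}])<\mu(A_{i})+\eta2^{-i}$. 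The open sets $\{\tilde A_{i}\}$ cover $K\subseteq A=\bigsqcup_{i}A_{i}$, so by compactness $K\subseteq\tilde A_{i_{1}}\cup\cdots\cup\tilde A_{i_{m}}$; hence $(a',b]\subseteq K\subseteq\bigcup_{j}A_{i_{j}}^{+}$, and finite subadditivity yields
\[
\mu((a',b])\le\sum_{j=1}^{m}\mu\bigl(A_{i_{j}}^{+}\bigr)\le\sum_{j=1}^{m}\bigl(\mu(A_{i_{j}})+\eta2^{-i_{j}}\bigr)\le\sum_{i=1}^{\infty}\mu(A_{i})+\eta.
\]
Combining, $\mu(A)<\sum_{i}\mu(A_{i})+2\eta$, and letting $\eta\to0$ gives $\mu(A)\le\sum_{i}\mu(A_{i})$. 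Together with the lower bound this proves the claim.

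The only real content is the compactness step and the bookkeeping at the half-open endpoints of $A$ and of each $A_{i}$: the open enlargements $\tilde A_{i}$ must be used to apply the Lebesgue-type covering argument but then immediately replaced by the dyadic half-open $A_{i}^{+}$ before invoking finite subadditivity, and the leftover pieces $(a,a']$ and $(b_{i},c_{i}]$ are absorbed using Claim \ref{claim:absCts}. Everything else is formal manipulation of the properties already established for $\mu$.
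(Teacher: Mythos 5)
Your proof is correct and follows essentially the same route as the paper: shrink $A$ at the left end to get a compact core $K$ (using Claim \ref{claim:absCts} to keep the discarded piece $\mu$-small), enlarge each $A_{i}$ to an open interval with dyadic endpoints whose $\mu$-excess is controlled by Claim \ref{claim:absCts}, extract a finite subcover of $K$ by compactness, and apply finite subadditivity. The only cosmetic difference is that you derive the lower bound directly from the definition of $N(\cdot\,;I_{k})$ rather than citing finite additivity and monotonicity as the paper does — a slightly more careful variant, since $A_{1}\sqcup\cdots\sqcup A_{N}$ need not itself lie in the semiring $\mathcal{E}$.
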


\begin{proof}
One direction is straightforward: by finite additivity and monotonicity we have\[
\sum_{i=1}^{N} \mu(A_{i}) = \mu\big( A_{1} \sqcup \ldots \sqcup A_{N}\big) \le \mu(A),
\]
and we obtain $\sum_{i=1}^{\infty} \mu(A_{i}) \le \mu(A)$ by sending $N$ to infinity. For the reverse direction, we will prove   \begin{equation}\label{eqn:epsilonCountableAdd}
\sum_{i=1}^{\infty} \mu(A_{i}) + \epsilon \ge \mu(A)
\end{equation}
for arbitrary $\epsilon>0$. By Claim \ref{claim:absCts}, there exists $\epsilon_{i}>0$ for each  $i \in \Z_{>0}$ such that \[
\textrm{$\mu(I) \le \epsilon/3^{i}$ for every $\epsilon_{i}$-short dyadic half-open interval $I$.}
\]
Now let $A = (a, b]$. If $A$ is shorter than $\epsilon_{1}$, then its $\mu$-value is smaller than  $\epsilon$ by the above. Hence Inequality \ref{eqn:epsilonCountableAdd} is immediate. If $A$ is not shorter than $\epsilon_{1}$, then we take $c \in (a, b]$ such that $(a, c]$ has length $\epsilon_{1}$.

Each $A_{i}$ is of the form $(a_{i}, b_{i}]$. We now take $c_{i} \in S^{1}$ such that $[b_{i}, c_{i}]$ has length $\epsilon_{i}$. Then we have\[
[c, b] \subseteq (a, b]=A \subseteq \sqcup_{i=1}^{\infty} A_{i} \subseteq \cup_{i=1}^{\infty} (a_{i}, c_{i}).
\]
Since $[c, b]$ is compact, there exists an integer $N$ such that  \[
[c, b] \subseteq \cup_{i=1}^{N} (a_{i}, c_{i}) \subseteq \cup_{i=1}^{M} (a_{i}, c_{i}].
\]
From this, we observe \[
(a,b ] \subseteq (a, c] \cup \big(\sqcup_{i=1}^{N} (a_{i}, b_{i}] \big) \cup \big(\cup_{i=1}^{N} (b_{i}, c_{i}]\big).
\]
Finite subadditivity then tells us that \[
\mu\big((a, b] \big)\le \mu\big( (a, c]\big) + \sum_{i=1}^{N} \mu(A_{i}) + \sum_{i=1}^{N} \mu\big( (b_{i}, c_{i}]\big).
\]
Recall that $(a, c]$ has length  $\epsilon_{1}$ and $(b_{i}, c_{i}]$ has length  $\epsilon_{i}$; their $\mu$-values are at most $\epsilon/3$ and at most $\epsilon/3^{i}$, respectively. \[
\mu(A)\le \epsilon/3 + \sum_{i=1}^{N} \mu(A_{i}) + \sum_{i=1}^{N} \epsilon/3^{i} \le \sum_{i=1}^{\infty} \mu(A_{i}) + \epsilon
\]
This ends the proof.
\end{proof}

The Carath{\'e}odory extension theorem tells us that $\mu$ is uniquely extended to a (countably additive) measure on the $\sigma$-algebra generated by $\mathcal{E}$. Here, the uniqueness relies on the fact that $\mu$ is a finite measure. So $\mu$ is now a Borel measure.

It remains to prove the $G$-invariance of $\mu$. We claim that $\mu(J) = \mu(gJ)$ holds for each $g \in G$ and for each dyadic half-open interval $J$. If this is true, then $\mu_{g} := g^{\ast} \mu$ coincides with $\mu$ on $\mathcal{E}$ so the uniqueness part of the Carath{\'e}odory extension theorem will imply that $\mu = \mu_{g}$ on the Borel $\sigma$-algebra. 

To prove the claim, let $J = (a, b]$ for some distinct elements $a, b \in \mathcal{D}$. Fix $\eta>0$, and let $\epsilon>0$ be the one for $\eta$ as in Claim \ref{claim:absCts}.

A subtlety is that $ga$ or $gb$ may well be outside $\mathcal{D}$. To cope with this, we take  small open intervals $U_{a}\ni a$, $U_{b}\ni b$ such that $gU_{a}$ and $gU_{b}$ are disjoint and are shorter than $\eta$. Here, by shrinking $U_{a}$ and $U_{b}$ a little bit, we can force $gU_{a}$ and $gU_{b}$ to be elements of $\mathcal{E}$, i.e., binary half-open intervals.

We now define $J_{1} := J \setminus (U_{a} \cup U_{b})$, $J_{2} := (S^{1} \setminus J) \setminus (U_{a} \cup U_{b})$. Then $J_{1}$ and $J_{2}$ are slightly smaller intervals than $J$ and $S^{1} \setminus J$, respectively, and $gJ_{1}, gJ_{2}, gU_{a}, gU_{b}$ are elements of $\mathcal{E}$ that partition $S^{1}$.

Let $k \in \Z_{>0}$. In the remaining, we abbreviate $N(S^{1} ; I_{k})$ into $N_{k}$ for convenience. By the definition of $N(\cdot \,; I_{k})$, we can take $g_{i}, g_{j}' \in G$ such that   \[
\sqcup_{i=1}^{N(J; I_{k})} g_{i} I_{k} \subseteq J,  \quad \sqcup_{j=1}^{N(S^{1} \setminus J; I_{k})} g'_{j}I_{k} \subseteq (S^{1} \setminus J).
\] We can then see that  \[
\sqcup_{i=1}^{N(J; I_{k})} gg_{i} I_{k} \subseteq gJ.
\]
Among $\{gg_{i} I_{k}\}_{i}$, there are at most $\eta N_{k}$ intervals that are included in $gU_{a}$ because of Claim \ref{claim:absCts}. Some of $\{g g_{i} I_{k}\}_{i}$ may intersect with $gU_{a}$ while not being included in $gU_{a}$, but such intervals are at most 2 (the ones containing an endpoint of $gU_{a}$). This uses the fact that $\{g g_{i} I_{k}\}_{i}$ are disjoint. Hence, at most $\eta N_{k} +2$ intervals among $\{g g_{i}I_{k}\}_{i}$ intersect with $gU_{a}$.

Similarly, there are at most $\eta N_{k}+2$ intervals among  $\{gg_{i} I_{k}\}_{i}$ that intersect with $gU_{b}$. Hence, there are at least $N(J; I_{k}) - 2\eta N_{k}-4$ intervals among $\{g g_{i} I_{k}\}_{i}$ that are included in $gJ_{1}$. Similar argument applies to  $\{gg'_{j}I_{k}\}_{j}$, and we conclude: \[
\begin{aligned}
N\big(gJ_{1} ; I_{k} \big) &\ge N(J; I_{k}) - 2\eta N_{k}-4, \\
N\big( gJ_{2}; I_{k} \big) &\ge N(S^{1} \setminus J; I_{k}) - 2 \eta N_{k}-4.
\end{aligned}
\]
Towards contradiction, let us assume that $N(gJ_{1}; I_{k})$ is greater than  $N(J; I_{k}) + 2\eta N_{k} +6$. Because $gJ_{1}$ and $gJ_{2}$ are disjoint subsets of $S^{1}$, we have \begin{equation}\label{eqn:contraNk}
N(S^{1} ; I_{k}) \ge N\big(gJ_{1}; I_{k} \big) + N\big( gJ_{2}; I_{k} \big) \ge N(J; I_{k}) + N(S^{1} \setminus J; I_{k})+3.
\end{equation}
Let us now pin down the translates of $I_{k}$'s realizing this number, i.e., we take $u_{1}, \ldots, u_{N_{k}} \in G$ such that $u_{1}I_{k}, \ldots, u_{N_{k}}I_{k}$ are disjoint. Here, $J$ and $S^{1} \setminus J$ partition $S^{1}$ and their boundary $\partial J$ consists of 2 points. In other words, except for at most 2 that contain some points of $\partial J$, the other  $u_{i} I_{k}$'s must be contained in $J$ or $S^{1} \setminus J$. Clearly $u_{i} I_{k}$'s are mutually disjoint. This leads to \[
N(J; I_{k}) + N(S^{1} \setminus J; I_{k}) \ge N(S^{1}; I_{k}) - 2,
\]
which contradicts with Inequality \ref{eqn:contraNk}.

Therefore, we conclude that $N(gJ_{1}; I_{k})$ and $N(J; I_{k})$ differ by at most  $2\eta N_{k} +6$. For the same reason, $N(gJ_{2}; I_{k})$ and  $N(S^{1} \setminus J; I_{k})$ differ by at most  $2\eta N_{k} +6$. We now increase $k$ and observe\[\begin{aligned}
\mu(J) - 2\eta &= \lim_{k \rightarrow +\infty} \frac{N(J; I_{k})-2\eta N_{k} - 6}{N_{k}} \\
&\le   \lim_{k \rightarrow +\infty} \frac{N(gJ_{1}; I_{k})}{N_{k}} = \mu (gJ_{1}) \\
&\le \lim_{k \rightarrow +\infty} \frac{N(J; I_{k})+2\eta N_{k} + 6}{N_{k}} = \mu(J) + 2\eta.
\end{aligned}
\]
For the similar reason, $\mu(S^{1}\setminus J)$ and $\mu(gJ_{2})$ differ by at most  $2\eta$. We now conclude  \[
\mu(J) - 2 \eta \le \mu(gJ_{1}) \le \mu(gJ) = 1 - \mu(S^{1} \setminus gJ) \le 1 - \mu(gJ_{2}) \le 1 - \mu(S^{1} \setminus J) + 2\eta = \mu(J) + 2\eta
\]
Since this inequality holds for arbitrary $\eta>0$, we conclude $\mu(J) = \mu(gJ)$.\end{proof}

\begin{remark}\label{rem:absCtsNo}
Note that the measure $\mu$ constructed in this proof is not always absolutely continuous with respect to $\mu$. For example, if one considers the group $G$ of rotations on the circle and then blow up dyadic rationals, we obtain a new action of $G$ on $S^{1}$ with the Cantor set as the minimal set. The new action preserves the pullback of the Lebesgue measure through the semiconjugacy map. The pullback measure $\mu$ is indeed the $\mu$ constructed in the proof above. It is true that short \emph{intervals} have small $\mu$-value, but it is not true that measurable sets with small Lebesgue measure have small $\mu$-value. Indeed, $\mu$ can be supported on the Lebesgue measure-zero Cantor set.

When $G$ is assumed to be a subgroup (rather than a subsemigroup), Margulis and Ghys reduces the general case to minimal actions by considering the semiconjugacy to the minimal Cantor set. It is tricky to apply this method  if $G$ is a subsemigroup, as $G$ does not canonically act on the minimal set by homeomorphisms. 
\end{remark}

\begin{cor}\label{cor:semigpNoInv}
Let $G$ be a subsemigroup of $\Homeo(S^{1})$ without invariant probability measure. Then there exists $\epsilon>0$ such that every $\epsilon$-short intervals are $G$-contractible.
\end{cor}

\begin{proof}
Since there is no invariant probability measure, each $G$-orbit must be infinite. Now Theorem \ref{thm:semigpInvProb} tells us that each point on $S^{1}$ has a $G$-contractible neighborhood. Lebesgue covering lemma then gives the desired $\epsilon$.
\end{proof}

\begin{cor}\label{cor:contractibleOpen}

Let $G$ be a subsemigroup of $\Homeo(S^{1})$ without invariant probability measure. Then every $G$-contractible closed interval of $S^{1}$ is contained in another $G$-contractible open interval.
\end{cor}

\begin{proof}
We first take $\epsilon>0$ to be the constant as in Corollary \ref{cor:semigpNoInv}. If a closed interval $[a, b]$ is $G$-contractible, then there exists $g \in G$ such that $\diam(g [a, b]) < \epsilon/3$. Because $g$ is continuous, we have $\diam( g[\alpha, \beta])< \epsilon/2$ for $\alpha, \beta \in S^{1} \setminus [a, b]$ close enough to $a$ and $b$, respectively. Then Corollary \ref{cor:semigpNoInv} tells us that  $g[\alpha, \beta]$ is $G$-contractible, i.e., there exists a sequence $\{g_{n}\}_{n>0}$ in $G$ such that $\lim_{n} \diam(g_{n} \cdot g[\alpha, \beta]) =0$. Now $\{g_{n} \cdot g\}_{n>0}$ is also a sequence in $G$ and contracts $[\alpha, \beta]$ arbitrarily small. Hence  $[\alpha, \beta]$ is $G$-contractible.
\end{proof}

We now talk about proximity.

\begin{dfn}\label{dfn:repeller}
Let $G$ be a subsemigroup of $\Homeo(S^{1})$ and let $x \in S^{1}$. If every closed interval in $S^{1} \setminus \{x\}$ is $G$-contractible, we say that  $x$ is a $G$-repeller.
\end{dfn}

The following lemma (and its proof) is a classical fact in group theory due to B. H. Neumann \cite[Lemma 4.1]{neumann1954groups}. It originally states that no group can be written as a finite union of left cosets of  infinite-index subgroups. We need a semigroup version of this fact, whose proof is very similar to the original one. We record it for the readers' convenience.

\begin{lem}\label{lem:disjtStab}
Let $G$ be a subsemigroup of $\Homeo(S^{1})$ such that every $G$-orbit is infinite. Then for every pair of finite sets $A$ and $B$, there exists $g \in G$ such that $gA$ and $B$ are disjoint.
\end{lem}

\begin{proof}
For each $x, y \in S^{1}$ we define $Fix(x, y):= \{g \in G : g(x) = y\}$. Let $G^{-1} := \{id\} \cup \{g^{-1} : g \in G\}$, which is again a semigroup. Our goal is to prove that:\begin{claim}\label{claim:disjtStab}
For any  $n \in \mathbb{Z}_{>0}$, for any $n$ finite subsets $S_{1}, \ldots, S_{n} \subseteq G^{-1}$, and for any (not necessarily distinct) $2n$ points $x_{1}, \ldots, x_{n}, y_{1}, \ldots, y_{n}$, \[
G \subseteq \cup_{i=1}^{n} S_{i} \cdot  Fix(x_{i}, y_{i})
\]
cannot hold.
\end{claim}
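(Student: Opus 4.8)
The plan is to mimic B.\ H.\ Neumann's original induction, adapting it to the semigroup setting where inverses are only available formally through $G^{-1}$.

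First I would set up the induction on $n$. The base case $n=1$ asserts $G \not\subseteq S_1 \cdot Fix(x_1, y_1)$ for a finite set $S_1 \subseteq G^{-1}$. Since every $G$-orbit is infinite, the orbit $G x_1$ is infinite; but each coset $s \cdot Fix(x_1, y_1)$ with $s \in S_1$ sends $x_1$ to a single point $s(y_1)$ (when $s$ is genuinely a partial inverse, or more carefully: if $g = s h$ with $h \in Fix(x_1,y_1)$ then $g(x_1) = s(y_1)$), so $S_1 \cdot Fix(x_1,y_1)$ moves $x_1$ to at most $\#S_1$ points, contradicting infinitude of $G x_1$. Here I must be slightly careful that elements of $S_i \subseteq G^{-1}$ may be the identity or honest inverses $g^{-1}$, and that $s(y_i)$ is well-defined as a point of $S^1$ since $G^{-1} \subseteq \Homeo(S^1)$; that is fine.

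For the inductive step, suppose the claim holds for fewer than $n$ sets and suppose toward a contradiction that $G \subseteq \bigcup_{i=1}^n S_i \cdot Fix(x_i, y_i)$ with, say, $Fix(x_n, y_n)$ appearing. If $Fix(x_n,y_n) = \emptyset$ we may drop that term and apply the inductive hypothesis. Otherwise pick $g_0 \in Fix(x_n, y_n)$. The key Neumann trick: for \emph{any} $g \in Fix(x_n, y_n)$, left-translating the covering by $g g_0^{-1}$ (interpreted appropriately: compose on the right, or rather observe $Fix(x_n,y_n) \cdot g_0^{-1} \cdot g \subseteq$ something) lets one replace the $i=n$ coset by the whole subsemigroup-like piece and absorb it, so that $Fix(x_n,y_n)$ ends up covered by the other $n-1$ cosets $\bigcup_{i<n} S_i' \cdot Fix(x_i, y_i)$ for suitably enlarged finite sets $S_i'$; meanwhile $G$ itself is covered by $\bigcup_{i \le n} S_i \cdot Fix(x_i,y_i)$, and substituting the covering of $Fix(x_n,y_n)$ into the $i=n$ term expresses $G$ as a union of $n-1$ cosets of stabilizers, contradicting the inductive hypothesis. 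I would carry this out by fixing representatives and chasing the membership relations $g = s_i h_i$ carefully, using that $Fix(x_i,y_i) \cdot Fix(y_i, z) \subseteq Fix(x_i, z)$ and that a finite union of $Fix$-cosets times a fixed element is again a finite union of $Fix$-cosets (the point $x_i$ or $y_i$ changes, the finite set $S_i$ grows by a controlled amount).

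Once Claim \ref{claim:disjtStab} is established, the lemma follows quickly: given finite sets $A = \{a_1, \dots, a_p\}$ and $B = \{b_1, \dots, b_q\}$, the ``bad'' set $\{g \in G : gA \cap B \neq \emptyset\}$ equals $\bigcup_{i,j} Fix(a_i, b_j)$, which is a finite union of $Fix$-cosets (with $S$'s trivial, i.e.\ $S = \{id\}$); by the claim this cannot be all of $G$, so some $g \in G$ has $gA \cap B = \emptyset$, as desired. The main obstacle I anticipate is the bookkeeping in the inductive step: in a genuine group Neumann simply multiplies cosets on the left by group elements, but here the ``inverses'' live in the auxiliary semigroup $G^{-1}$ and one must verify at each step that the manipulated objects remain finite unions of sets of the form $S_i \cdot Fix(x_i, y_i)$ with $S_i \subseteq G^{-1}$ finite — in particular that composing such a set with a fixed homeomorphism, or intersecting with $Fix(x_n, y_n)$ and translating, does not leave this class. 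This is exactly where having phrased the claim in terms of $G^{-1}$ rather than $G$ pays off, and the verification, while a little fiddly, is essentially the same combinatorial argument as in \cite{neumann1954groups}.
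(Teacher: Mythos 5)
Your base case is correct and matches the paper. The inductive step, however, has a genuine gap: you have not identified the mechanism that actually drives the induction, and the specific translation you propose does not do the job. In Neumann's argument one must produce a translate $g \cdot Fix(x_n,y_n)$ that is \emph{disjoint} from $S_n \cdot Fix(x_n,y_n)$, so that $g \cdot Fix(x_n,y_n) \cap G$ is forced into the remaining $n-1$ terms of the covering. The way to produce such a $g$ is again the orbit-infinitude hypothesis (not just in the base case): since $G y_n$ is infinite and $S_n y_n$ is finite, one may choose $g \in G$ with $g y_n \notin S_n y_n$, and then $g \cdot Fix(x_n,y_n)$ (which sends $x_n$ to $g y_n$) is disjoint from $S_n \cdot Fix(x_n,y_n)$ (which sends $x_n$ into $S_n y_n$). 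Your proposed translation -- ``for any $g \in Fix(x_n,y_n)$, left-translate by $g g_0^{-1}$ with $g_0 \in Fix(x_n,y_n)$'' -- does exactly the opposite: both $g$ and $g_0$ lie in $Fix(x_n,y_n)$, so $g g_0^{-1}$ fixes $y_n$, and $Fix(x_n,y_n) \cdot g_0^{-1} g \subseteq Fix(x_n,y_n)$, i.e.\ this translation \emph{stabilizes} the coset rather than displacing it. Consequently there is no reason the resulting translate should escape $S_n \cdot Fix(x_n,y_n)$, and the ``absorption'' you describe does not get off the ground.

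The subsequent substitution scheme (cover $Fix(x_n,y_n)$ by the other $n-1$ coset families, then substitute into the $i=n$ term) is a legitimate route \emph{once} you have the disjoint translate, and your observation that $G^{-1}$ is closed under composition keeps the bookkeeping in class. But deferring the core step to ``fiddly bookkeeping'' conceals that the proof requires a second application of orbit infinitude inside the inductive step. There is also a semigroup-specific subtlety you would need to address when undoing the translation by $g$: one only has $g^{-1} G \cap Fix(x_n,y_n)$ inside the smaller union, and to pass from this to $G \cap s \cdot Fix(x_n,y_n)$ for $s \in S_n$ one uses the one-sided inclusion $G \subseteq s g^{-1} G$ (valid because $g s^{-1} G \subseteq G$), which is not automatic in a semigroup and is precisely the place where the paper's argument earns its keep. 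As written, your proposal does not supply either of these two ingredients, so it does not constitute a proof.
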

Let us observe this for $n=1$. Given a finite set $S = \{g_{1}, \ldots, g_{k}\} \in \Homeo(S^{1})$ and points $x, y \in S^{1}$, the elements of $S \cdot Fix(x, y)$ send $x$ to one of finitely many candidates $g_{1} y, \ldots, g_{k} y$. Meanwhile, because $x$ has infinite $G$-orbit, there exists $g \in G$ that sends $x$ to something other than $g_{1} y \ldots, g_{k} y$. This shows that $G \not\subseteq S \cdot Fix(x, y)$.

In order to induct on $n$, let us assume\[
G \subseteq \cup_{i=1}^{n} S_{i} \cdot  Fix(x_{i}, y_{i})
\]
for some $S_{1}, \ldots, S_{n} \subseteq G^{-1}$ and $x_{1}, y_{1}, \ldots, x_{n}, y_{n} \in S^{1}$. Since $y_{1}$ has infinite $G$-orbit, there exists $g \in G$ such that $gy_{1} \notin S_{1} y_{1}$. Then $g \cdot Fix(x_{1}, y_{1})$ cannot intersect with $S_{1} \cdot Fix(x_{1}, y_{1})$. It follows that  \[
G \cap \big(g \cdot Fix(x_{1}, y_{1})\big) \subseteq \cup_{i=2}^{n} S_{i} \cdot Fix(x_{i}, y_{i}).
\]
This implies \[
g^{-1} \cdot G \cap Fix(x_{1}, y_{1}) \subseteq \cup_{i=2}^{n} g^{-1} S_{i} \cdot Fix(x_{i}, y_{i}).
\]
Hence, for each $s \in S_{1}$, we have  \[
G \cap s Fix(x_{1}, y_{1}) \subseteq sg^{-1} G \cap sFix(x_{1}, y_{1}) \subseteq \cup_{i=2}^{n} s g^{-1} S_{i} \cdot Fix(x_{i}, y_{i}).
\]
Here, the first inclusion is due to the fact that  $G \subseteq sg^{-1} \cdot (g s^{-1} G) \subseteq sg^{-1} G$. Using this, we deduce \[\begin{aligned}
G &\subseteq \Big(\cup_{s \in S_{1}} (G \cap s Fix(x_{1}, y_{1}) ) \Big) \cup \Big( \cup_{i=2}^{n} S_{i} Fix(x_{i}, y_{i}) \Big) \\
&\subseteq \cup_{i=2}^{n} \big(\{s g^{-1} s' : s \in S_{1}, s' \in S_{i} \} \cup S_{i} \big) \cdot Fix(x_{i}, y_{i}).
\end{aligned}
\]
Clearly $\{s g^{-1} s' : s \in S_{1}, s' \in S_{i} \} \cup S_{i}$ is a finite subset of $G^{-1}$ for each $i$. Hence, a counterexample for $n$ leads to a counterexample for $n-1$. Since we have ruled out such a counterexample for $n=1$, the claim follows from the induction.

Coming back to the lemma, we need to prove that \[
G \subseteq \cup_{a \in A, b \in B} Fix(a, b)
\]does not hold. This follows immediately from Claim  \ref{claim:disjtStab}.
\end{proof}

\begin{prop}\label{prop:repellerSchottky}
Let $G$ be a subsemigroup of $\Homeo(S^{1})$ such that every $G$-orbit is infinite. Suppose also that there exists a $G$-repeller $x \in S^{1}$. Then $G$ contains a Schottky pair associated with essentially disjoint intervals.
\end{prop}

\begin{proof}
Let $\{I_{n}\}_{n>0}$ be an interval neighborhood basis of $x$, i.e., $I_{n} \searrow \{x\}$. For each $n$, $S^{1} \setminus I_{n}$ is a closed interval disjoint from $x$ and hence $G$-contractible. From this we can take a sequence $\{g_{n}\}_{n>0}$ in $G$ such that $\diam(S^{1} \setminus g_{n} I_{n}) \searrow 0$. Let $J_{n} := S^{1} \setminus g_{n}I_{n}$. Because $S^{1}$ is compact, $\{J_{n}\}_{n>0}$ has an accumulation point $y$. Up to subsequence, we may assume that $J_{n}$ converge to $y$, i.e., $\diam(J_{n} \cup y) \searrow 0$. At this moment, if $y=x$ happens to be the case, then we pick $g \in G$ such that $gx \neq x$ (using the infinitude of $Gx$). Then $\{g \cdot g_{n}\}_{n>0}$ now sends $S^{1}\setminus I_{n}$ to $gJ_{n}$, which converge to $gx \neq x$. Considering this, we may assume $y\neq x$.

By Lemma \ref{lem:disjtStab}, there exists $g \in G$ such that $g\cdot \{x, y\}$ does not intersect with $\{x\}$, i.e., $x, y, g^{-1} x$ are distinct points. Another round of Lemma \ref{lem:disjtStab} guarantees an element $h \in G$ such that $hy$ and $\{x, y, g^{-1} x\}$ are disjoint. 

Here, note that $hg_{n} g$ sends $S^{1} \setminus g^{-1} I_{n}$ to $hJ_{n}$, where $g^{-1} I_{n}$ converges to $g^{-1} x$ and $hJ_{n}$ converges to $hy$ as $n\rightarrow \infty$. Since $x, y, g^{-1} x, hy$ are all distinct, we can take large enough $n$ such that $\overline{I_{n}}, \overline{J_{n}}, \overline{g^{-1} I_{n}}, \overline{hJ_{n}}$ are each sufficiently close to $x, y, g^{-1} x, hy$ and are mutually disjoint. For that $n$, $(g_{n}, hg_{n} g)$ becomes a Schottky pair associated with essentially disjoint intervals $(I_{n}, J_{n}, g^{-1} I_{n}, hJ_{n})$. Clearly $g_{n}$ and $hg_{n}g$ both belongs to $G$. This finishes the proof.
\end{proof}

For a subsemigroup $G$ of $\Homeo(S^{1})$, if the supremum of lengths of $G$-contractible intervals is 1, then there must be $G$-repeller. Indeed, suppose that   $\lim_{n} \diam( S^{1} \setminus I_{n}) =0$ for some sequence $\{I_{n}\}_{n>0}$ of $G$-contractible intervals. By taking a subsequence if needed, we may assume that $S^{1} \setminus I_{n}$ converges to some point $x \in S^{1}$. Then any closed interval not containing $x$ should be contained in some $I_{n}$ for some large $n$. Such an interval should be $G$-contractible, and $x$ is a $G$-repeller.

The contrapositive of the above is as follows: if a subsemigroup $G$ of $\Homeo(S^{1})$ does not have a $G$-repeller, then the supremum of lengths of $G$-contractible intervals is smaller than 1.  We now introduce a notion:

\begin{dfn}\label{dfn:nextContract}
Let $G$ be a subsemigroup of $\Homeo(S^{1})$. We say that a closed interval $J = [a, b]$ (that is not the entire circle) is \emph{$G$-firm} if it satisfies the following: \begin{enumerate}
\item $[a, c]$ is $G$-contractible for each $c \in (a, b)$, and
\item $J=[a, b]$ is not $G$-contractible.
\end{enumerate}
\end{dfn}

Let $G$ be a subsemigroup of $\Homeo(S^{1})$ without invariant probability measure, let $x \in S^{1}$ be an arbitrary point, and let \[
I := \{y \in S^{1} : \textrm{$[x, y]$ is $G$-contractible}\}
\]
The following is immediate: for each element $y \in I \setminus \{x\}$, every element of $[x, y]$ also belongs to $I$. Hence, $I$ is either an interval with $x$ as the left endpoint, or the entire circle. Corollary \ref{cor:contractibleOpen} tells us that $I$ is a half-open interval, with the left end closed and the right end open, unless $I = S^{1}$. Hence, if $I \neq S^{1}$, then the closure $\bar{I}$ of $I$ is a $G$-firm interval. 

Note also that $x$ is a $G$-repeller if $I = S^{1}$. In other words, if $G$ is moreover a subsemigroup without $G$-repeller, then each $x\in S^{1}$ becomes a left endpoint of some $G$-firm interval. We denote this $G$-firm interval by $Firm(x)$. Note that the supremum of length of $G$-firm intervals is smaller than 1.

\begin{lem}\label{lem:nextContractNoProx}
Let $G$ be a subsemigroup of $\Homeo(S^{1})$ without $G$-repeller. If $x, y \in S^{1}$ satisfy that $Firm(x) \subseteq [x, y]$ and $Firm(y) \subseteq [y, x]$, then $x$ and $y$ cannot be brought close to each other by the $G$-action, i.e., $\inf_{g \in G} d(gx, gy) > 0$ holds.
\end{lem}

\begin{proof}
Towards contradiction, suppose that there exists a sequence $\{g_{n}\}_{n>0}$ in $G$ such that 
$\lim_{n} d(g_{n}x, g_{n}y) = 0$. Then either $\liminf_{n} \diam(g_{n}[x, y]) =0$ or $\liminf_{n} \diam(g_{n}[y, x]) =0$ holds. The former implies that $Firm(x) \subseteq [x, y]$ is $G$-contractible, which is a contradiction. The latter implies that $Firm(y) \subseteq [y, x]$ is $G$-contractible, which is again a contradiction.\end{proof}

Let $G$ be a subsemigroup of $\Homeo(S^{1})$ whose every $G$-orbit is infinite, and suppose that there exists a $G$-repeller $p$. We then claim that $G$ is proximal, i.e., $\inf_{g \in G} d(gx, gy) = 0$ for each $x, y \in S^{1}$. First, when $x, y \neq p$ this is clear from the definition of $G$-repellers. If $x = p$, then we can pick some $h \in G$ such that $x \notin \{hx, hy\}$ using Lemma \ref{lem:disjtStab}. It is easy to check that $h^{-1} x$ is also a $G$-repeller, which is distinct from $x$ and $y$. It is then clear that $\inf_{g \in G} d(gx, gy) = 0$.

Interestingly, the converse also holds. We first observe:

\begin{lem}\label{lem:noFirmIntersect}
Let $G$ be a subsemigroup of $\Homeo(S^{1})$ without invariant probability measure and without $G$-repeller. Then there exists $x, y \in S^{1}$ such that $Firm(x) \subseteq [x, y]$ and $Firm(y)\subseteq [y, x]$.
\end{lem}

\begin{proof}
Suppose to the contrary that \[
Firm(x) \not\subseteq [x, y] \,\,\textrm{or}\,\, Firm(y) \not\subseteq [y, x]
\]
for each $x, y \in S^{1}$. We then claim the following; see Figure \ref{fig:contractDichom}.

\begin{claim}\label{claim:contractibleNbdDich}
For each $x \in S^{1}$ there exists an open neighborhood $U$ of $x$ such that, for every open interval $I \subseteq U$ and for every $q \in S^{1} \setminus I$, one of the following holds:\begin{enumerate}
\item $[p, q]$ is $G$-contractible for all $p \in I$, or;
\item $[q, p]$ is $G$-contractible for all $p \in I$.
\end{enumerate}
\end{claim}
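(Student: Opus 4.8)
\textbf{Proof plan for Claim \ref{claim:contractibleNbdDich}.}

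The plan is to fix $x \in S^{1}$ and produce the neighborhood $U$ as a small half-open (or open) interval around $x$, then verify the dichotomy by a continuity/nesting argument. The starting point is that since $G$ has no $G$-repeller, $x$ is the left endpoint of its $G$-firm interval $Firm(x) = [x, z_{x}]$ for some $z_{x} \ne x$, and likewise every point $p$ near $x$ has its own $Firm(p) = [p, z_{p}]$. The contrary assumption, applied to pairs of points, says that for every $p, q$, either $Firm(p) \subseteq [p, q]$ or $Firm(q) \subseteq [q, p]$; equivalently, the two $G$-firm intervals $Firm(p)$ and $Firm(q)$ cannot be ``linked'' in the sense of Lemma \ref{lem:noFirmIntersect}'s hypothesis. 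The first step is to translate this into a statement about the endpoints $z_{p}$: for $p$ and $q$ close together (both in a small interval $I$), the firm intervals $[p,z_p]$ and $[q,z_q]$ each have length bounded away from $0$ and bounded away from $1$ (the latter because there is no $G$-repeller, as noted just before Lemma \ref{lem:nextContractNoProx}), so if $q \in I$ is very close to $p$ then $q \notin Firm(p)$ would force $z_p \in [p,q]$, impossible for $I$ short; hence $q \in Firm(p)$ for all $p, q$ in a sufficiently short $I$, i.e. $Firm(p) \supseteq I$ up to the left endpoint.

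The second step is the main point: fix such a short interval $U = I$ and take any $q \in S^{1} \setminus I$ and any $p \in I$. Consider the two arcs $[p, q]$ and $[q, p]$ into which $p$ and $q$ split the circle. The interval $Firm(p) = [p, z_p]$ starts at $p$ and, by the previous step, contains all of $I$ near $p$; since $q \notin I$, the point $z_p$ is either inside $[p,q]$ or inside $[q,p]$ (it cannot equal $q$ generically, but we can handle $z_p \in \{q\}$ separately or perturb). Suppose $z_p \in (q, p)$, i.e. $Firm(p) \supseteq [p, q]$ up to orientation — wait, more carefully: $z_p$ lies on one of the two arcs determined by $p,q$. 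If $z_p \in [p,q]$ then $[p,q] \not\subseteq Firm(p)$ and $Firm(p) = [p,z_p] \subseteq [p,q]$, so by the contrary assumption (with this $p$ and $q$) we get $Firm(q) \subseteq [q,p]$; if instead $z_p \in (q,p)$ then $[p,q] \subseteq Firm(p)$, hence $[p,q]$ is $G$-contractible (a closed subinterval of a $G$-firm interval not equal to it — using that $[p,c]$ is $G$-contractible for every $c \in (p, z_p)$ and $q$ is strictly inside). The key observation I will establish is that \emph{which} of these two cases occurs does not depend on the choice of $p \in I$: if it flipped for two points $p_1, p_2 \in I$, then using that $I \subseteq Firm(p_1) \cap Firm(p_2)$ (both firm intervals contain all of $I$ near their left endpoints) together with the nesting property of the $G$-action on intervals, one derives that $Firm(p_1)$ and $Firm(p_2)$ are linked, contradicting the contrary hypothesis via Lemma \ref{lem:nextContractNoProx}. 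So there is a single alternative valid for all $p \in I$: either $[p,q]$ is $G$-contractible for all $p \in I$, or $Firm(q) \subseteq [q,p]$ hence $[q,p]$ is $G$-contractible for all $p \in I$ (again because $Firm(q) = [q, z_q]$ with $[q,c]$ $G$-contractible for $c \in (q, z_q)$, and $p$ ranges over $I$ which sits strictly inside $(q, z_q)$ once we know $Firm(q) \subseteq [q,p_0]$ for one $p_0$ and $I$ is short). This is exactly the stated dichotomy.

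The third step, which is purely bookkeeping, is to make sure the boundary cases are covered: $p$ could be an endpoint of $I$, $q$ could be an endpoint of $Firm(p)$, and one must check that ``$[p,q]$ is $G$-contractible'' genuinely follows when $[p,q]$ is a \emph{proper} closed subinterval of the $G$-firm interval $Firm(p)$, using Definition \ref{dfn:nextContract}(1) directly. I expect the main obstacle to be the second step — showing the chosen alternative is independent of $p \in I$ — because it requires carefully exploiting the one-dimensional nesting property (the only property of circle homeomorphisms used in the paper) to convert a ``linking'' configuration of two firm intervals into a contradiction with the no-linking hypothesis; the bound ``length of $G$-firm intervals is $< 1$'' (no $G$-repeller) and the openness from Corollary \ref{cor:contractibleOpen} are the tools that keep everything strictly inside and let continuity do its work. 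Once the claim is proved, the subsequent argument in Lemma \ref{lem:noFirmIntersect} will patch these local dichotomies together around the circle to reach a global contradiction.
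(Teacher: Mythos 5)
Your proposal takes a genuinely different route from the paper — and unfortunately it contains both a sign error and an unresolved gap at exactly the spot you flagged as the main obstacle.

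The paper's proof is constructive: starting from $Firm(x)=[x,z]$, it uses the contrary hypothesis once (applied to the pair $(x,z)$) to obtain a point $a \in (x,z)$ with $[z,a]$ $G$-contractible, then invokes Corollary~\ref{cor:contractibleOpen} to extend this to $[z',a]$ with $z' \in (a,z)$, picks $z'' \in (z',z)$ so that $[x,z'']$ is $G$-contractible, and extends again to $[x',z'']$ with $x' \in (z'',x)$. The neighborhood $U=(x',a)$ then works because $[x',z'']$ and $[z',a]$ are two \emph{fixed} $G$-contractible intervals whose union covers $S^1 \setminus I$ for any $I \subseteq U$: any $q \notin I$ lies in $[d,z''] \subseteq [x',z'']$ or in $[z'',c] \subseteq [z',a]$, and the dichotomy is read off from which witness interval swallows the arc $[p,q]$ or $[q,p]$ uniformly over $p \in I$. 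Nothing in the argument varies with $p$; the uniformity is baked in.

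Your proposal instead varies $p \in I$, classifies the two cases by the position of $z_p$ relative to $q$, and then tries to show the case selected is independent of $p$. Two concrete problems. First, a sign error: when $Firm(p) \subseteq [p,q]$, the contrary hypothesis yields $Firm(q) \not\subseteq [q,p]$, i.e.\ $z_q \in (p,q)$ and hence $p$ lies in the \emph{interior} of $Firm(q)$, which is what makes $[q,p]$ $G$-contractible. You wrote $Firm(q) \subseteq [q,p]$, which is the opposite, and it is internally inconsistent with your own next sentence: $Firm(q) \subseteq [q,p_0]$ forces $z_q \in [q,p_0]$, so $p_0 \notin (q,z_q)$, whereas your argument needs $p_0 \in (q,z_q)$. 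Second, and more seriously, the independence claim is not established. One can check that $z_p$ is weakly monotone in $p$ (if $p_1, p_2, y$ are in CCW order and $[p_1,y]$ is $G$-contractible then so is the subinterval $[p_2,y]$), and this rules out a flip from case~(ii) to case~(i) as $p$ increases — but it does \emph{not} rule out the flip from (i) to (ii), which is perfectly compatible with monotonicity ($z_{p_1} \le q < z_{p_2}$). In that flip scenario neither ``$[p,q]$ contractible for all $p$'' nor ``$[q,p]$ contractible for all $p$'' follows from your local analysis; you would need a separate argument (essentially, ruling out $z_q \in I$) that your sketch does not supply. The phrase ``one derives that $Firm(p_1)$ and $Firm(p_2)$ are linked, contradicting the contrary hypothesis via Lemma~\ref{lem:nextContractNoProx}'' is not a proof — I tried to make it work with the pairs $(p_1,p_2)$, $(q,p_2)$, $(p_1,q)$ and did not find the required linked configuration. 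This is precisely where the paper's explicit witness intervals earn their keep: they sidestep the need to track $z_p$ and $z_q$ at all.
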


\begin{proof}
Given $x \in S^{1}$, we take $z$ such that $Firm(x) = [x, z]$. Then by the assumption, $Firm(z)$ is not contained in $[z, x]$; there exists $a \in (x, z)$ such that $[z, a]$ is $G$-contractible. (Think of $a$ as a point ``just right to $x$"). By Corollary \ref{cor:contractibleOpen}, there exists $z' \in (a, z)$ such that $[z', a]$ is also $G$-contractible. (Think of $z'$ as a point ``just left to $z$".) We now take $z'' \in (z', z)$. Then $z'' \in S^{1} \setminus [z, a] \subseteq (x, z)$. Hence, $[x, z'']$ is $G$-contractible. This means $[x', z'']$ is $G$-contractible for some  $x' \in (z'', x) \subseteq (z, x)$ (Again, $x'$ is ``just left to $x$").

To show that $U := (x', a)$ satisfies the desired property, let $I = (c, d)$ be an interval contained in $U$. Every $q \in S^{1} \setminus I$ either belongs to $[d, z'']$ or $[z'', c]$. In the former case, $[p, q] \subseteq [x', z'']$ is $G$-contractible for every $p \in I$. In the latter case, $[q, p] \subseteq [z', a]$ is $G$-contractible for each $p \in I$. 
\end{proof}

\begin{figure}
\begin{tikzpicture}[scale=0.75]
\draw (0, 0) circle (3);
\draw[very thick, rotate=180] (3.3, 0) arc (0:180:3.3);
\draw[very thick] (3.6, 0) arc (0:210:3.6);
\draw[dashed] (3, 0) -- (4, 0);
\draw (4.15, 0) node {$z$};
\draw[dashed] (-3, 0) -- (-4, 0);
\draw (-4.15, 0) node {$x$};
\draw[dashed, rotate=203] (3, 0) -- (4.55, 0);
\draw[rotate=203] (4.65, 0) node {$a$};
\draw[dashed, rotate=-15] (3, 0) -- (4, 0);
\draw[rotate=-15] (4.18, 0) node {$z'$};
\draw[dashed, rotate=-7.5] (3, 0) -- (4.45, 0);
\draw[rotate=-7.5] (4.7, 0) node {$z''$};
\draw[dashed, rotate=162] (3, 0) -- (4.45, 0);
\draw[rotate=162] (4.65, 0) node {$x'$};
\draw (0, -3.7) node {$Firm(x)$};
\draw (0, 3.95) node {$Firm(z)$};
\draw[dashed, very thick, rotate=167] (4.65, 0) arc (0:33:4.65);
\draw[rotate=-6] (-4.92, 0) node {$U$};
\fill[rotate=3] (-4.65, 0) circle (0.1);
\fill[rotate=15] (-4.65, 0) circle (0.1);
\draw[rotate=3] (-4.95, 0) node {$c$};
\draw[rotate=15] (-4.95, 0) node {$d$};
\draw[ very thick, rotate=183] (4.65, 0) arc (0:12:4.65);


\end{tikzpicture}
\caption{Schematics for Claim \ref{claim:contractibleNbdDich}.}
\label{fig:contractDichom}
\end{figure}
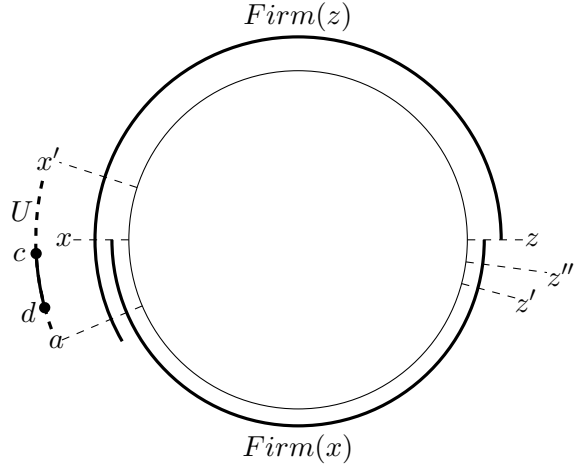

Pick an arbitrary $x \in S^{1}$ and consider an increasing sequence of intervals $I_{n} = [x, z_{n}]$ that fills up the interior of $Firm(x)$. That means, writing  $Firm(x)$ as $[x, z]$, we require that $[x, z_{n}] \subsetneq [x, z]$ and $\lim_{n} z_{n} =z$. Note that $z \notin [x, z_{n}]$. Each $I_{n}$ is $G$-contractible and there exists $g_{n} \in G$ such that $g_{n} I_{n}$ is $1/n$-short. By taking a subsequence, we may assume that $g_{n}I_{n}$ converges to a point $y$.

Let $U=U(y)$ be the open neighborhood for $y$ as in Claim \ref{claim:contractibleNbdDich}. For every sufficiently large $n$, $g_{n}I_{n}$ is contained in  $U(y)$ and $g_{n}z$ is outside $g_{n}I_{n}$. By Claim \ref{claim:contractibleNbdDich}, one of the following should hold:\begin{enumerate}
\item $g_{n}[p, z]$ is $G$-contractible for every $p \in [x, z_{n}] \subseteq g_{n}^{-1}U$; or
\item $g_{n}[z, p]$ is $G$-contractible for every $p \in  [x, z_{n}] \subseteq g_{n}^{-1}U$.
\end{enumerate}
In the former case, $[x, z] = Firm(x)$ becomes $G$-contractible, which is a contradiction. Hence, only the latter can be true: $[z, z_{n}]$ is $G$-contractible. Because this holds for arbitrary $n$ and because $[z, z_{n}]$ exhausts $S^{1} \setminus \{z\}$, we conclude that $z$ is a $G$-repeller. This contradicts the assumption.
\end{proof}

We are now ready to prove Theorem \ref{thm:proximalEquiv}.

\begin{proof}[Proof of Theorem \ref{thm:proximalEquiv}]
Let $G$ be a subsemigroup of $\Homeo(S^{1})$. First note that a $G$-fixed point is by definition a singleton $G$-orbit. Next, if there exists a finite $G$-orbit, i.e., $\# Gx < +\infty$ for some $x \in S^{1}$, then the uniform measure on $Gx$ becomes $G$-invariant. Indeed, for any $g \in G$, we have $g \cdot Gx \subseteq Gx$ by the semigroup property of $G$. Since $Gx$ is a finite set, this forces that $gGx = Gx$. In other words, each $g \in G$ permutes points in $Gx$ and preserves the uniform measure on $Gx$. We conclude the implication (3) $\Rightarrow$ (2) $\Rightarrow$ (1) in the statement.

We now show that (1) implies (3) by the method of contradiction. To this end, let $G$ be a subsemigroup of $\Homeo(S^{1})$ that acts on $S^{1}$ proximally without a fixed point, but with an invariant measure $\mu$. Suppose first that $G$ has a finite orbit, i.e., there exists $y \in S^{1}$ such that $Gy = \{y_{1}, \ldots, y_{N}\}$ is a finite set. Since $G$ does not have a global fixed point, $N$ must be at least 2. Then for any homeomorphism $g \in G$, $gy_{1}$ and $gy_{2}$ are distinct points in $Gy$. In other words, \[
\inf_{g \in G} d(gy_{1}, gy_{2}) \ge \min \big\{ d(y_{i}, y_{j}) : i, j \in \{1, \ldots, N\}, i \neq j \big\} > 0.
\]
and the action of $G$ is not proximal, which is a contradiction. Hence, $G$ does not have a finite orbit.

It is now immediate that $\mu$ is atom-less. Indeed, if $\mu(x) = \epsilon > 0$ for some $x \in S^{1}$, then $\mu(Gx) = \epsilon \cdot \#G = +\infty$, which cannot happen for a probability measure on $S^{1}$. We can then find an interval $I = [a, b]$ such that both $I$ and $S^{1} \setminus I$ has positive $\mu$-value. Let $\epsilon>0$ be such that $\mu([a, b]), \mu([b, a]) \ge \epsilon$. (In fact, one can realize $\epsilon = 1/2$ by the Intermediate Value Theorem.)

Since $G$ acts proximally on $S^{1}$, there exists a sequence $(g_{n})_{n>0}$ in $G$ such that either $\diam(g_{n}[a, b]) \searrow 0$ or $\diam(g_{n}[b, a]) \searrow 0$. By switching $a$ and $b$ and taking a subsequence of $(g_{n})_{n>0}$ if necessary, we may assume that $g_{n} [a, b]$ converges to a point $y \in S^{1}$. Then for any small neighborhood $U$ of $y$, $\mu(U) \ge \mu(g_{n}[a, b]) = \mu([a, b]) \ge \epsilon$ holds for some large enough $n$. It follows that $\mu(\{y\}) = \inf_{\textrm{open $U \ni y$}} \mu(U)\ge \epsilon$, contradicting to the non-atomness of $\mu$. Hence, such a $G$-invariant measure $\mu$ cannot exist.

Let us now show that (3) implies (4). For this, let $G$ be a subsemigroup of $\Homeo(S^{1})$ that acts proximally without an invariant probability measure. If there is no $G$-repeller on $S^{1}$, then there exists $x, y \in S^{1}$ such that $Firm(x) \subseteq [x, y]$ and $Firm(y) \subseteq [y, x]$ by Lemma \ref{lem:noFirmIntersect}. Lemma \ref{lem:nextContractNoProx} will then imply that the action of $G$ is not proximal, a contradiction. In summary, there must be a $G$-repeller. Note also that every $G$-orbit is infinite. By Proposition \ref{prop:repellerSchottky}, $G$ contains a Schottky pair associated with essentially disjoint intervals.
\end{proof}

We turn to the proof of Theorem \ref{thm:titsAltGen} following Margulis' argument with some paraphrasing. In the rest of this section, we will call finite unions of intervals \emph{elementary sets}. We begin by recording an elementary lemma.

\begin{lem}\label{lem:eltSet}
Let $\epsilon>0$, let $E \subseteq S^{1}$ be an elementary set and let $\{F_{k}\}_{k>0}$ be a sequence of intervals. If $\Leb(F_{k} \setminus E)$ is greater than $\epsilon$ for each $k$, then there exists a subsequence $\{F_{k(l)}\}_{l>0}$ of $\{F_{k}\}_{k}$ and an elementary set $K$ outside $E$  such that $\Leb(K) =\epsilon/2$ and $K \subseteq F_{k(l)}$ for all $l$.
\end{lem}

\begin{proof}[Proof of Theorem \ref{thm:titsAltGen}]
Let $G$ be a subsemigroup of $\Homeo(S^{1})$ without any invariant probability measure. We can pick $\epsilon>0$ for $G$ as in Corollary \ref{cor:semigpNoInv}. For convenience, assume that $N:= 1/\epsilon$ is a positive integer. We then take $N$ points equidistributed on $S^{1}$; they become endpoints of $N$ almost-disjoint closed intervals denoted by $I_{1}, \ldots, I_{N}$. In other words, each $I_{i}$ have length $1/N = \epsilon$ and their union is the entire circle.

We construct, for each $n=1, 2, \ldots$, a finite set $S_{n} \subseteq S^{1}$, an interval $V_{n}$,  an elementary set $W_{n}$  and a  sequence  $\{g_{k; n}\}_{k>0}$ in $G$. We will also define $W_{0}$ as the base case. We claim that they satisfy:
\begin{enumerate}
\item $V_{n} \in \{I_{1}, \ldots, I_{N}\}$.
\item $W_{0}, W_{1}, \ldots$ are disjoint and $\Leb(W_{n}) = \frac{1}{2N}(1-\frac{1}{2N})^{n-1}$. (Hence, $\Leb(W_{0} \sqcup \ldots \sqcup W_{n}) = 1- (1-\frac{1}{2N})^{n+1}$.)
\item $g_{k; n}(W_{n}) \subseteq V_{n}$ for each $k$.
\item $g_{k; n} (W_{0} \sqcup \ldots \sqcup W_{n-1})$ converges to $S_{n}$ as $k$ tends to infinity. That means, for every $\eta>0$,  $g_{k; n} (W_{0} \sqcup \ldots \sqcup W_{n-1}) \subseteq N_{\eta}(S_{n})$ holds for all sufficiently large $k$.
\end{enumerate}

We discuss the base case $n=1$. Pick an arbitrary interval of length $1/2N$ and denote it by $W_{0}$. Because $W_{0}$ is $\epsilon$-short, it is $G$-contractible. Hence there exists a sequence $\{g_{k}\}_{k>0}$ in $G$ such that $\diam(g_{k} W_{0}) \rightarrow 0$. By taking suitable subsequence,  we may assume that $g_{k} W_{0}$ converges to some point  $x_{1} =: S_{1}$. Meanwhile, for each $k$, \[
g_{k}^{-1} (I_{1}) \setminus W_{0},\,g_{k}^{-1} (I_{2}) \setminus W_{0},\,  \ldots, \,g_{k}^{-1} (I_{N})\setminus W_{0}
\]
are disjoint elementary sets partitioning $S^{1} \setminus W_{0}$. Let $l(k)$-th one be the one with greatest Lebesgue measure value; that value should be at least  $\frac{1}{N} (1 - \diam(W_{0}))$. Note that $l(k)$ is picked from $\{1, \ldots, N\}$, a finite set. Hence, up to subsequence, we may assume that $l(1), l(2), \ldots$ are identical; we define  $V_{1}$ to be $I_{l(1)} = I_{l(2)} = \ldots$. By appealing to Lemma \ref{lem:eltSet}, we may assume the following up to subsequence: there exists an elementary subset $W_{1} \subseteq S^{1} \setminus W_{0}$ with Lebesgue measure $\frac{1}{2N}(1-\diam(W_{0}))$ so that  $g_{k}^{-1} (V_{1}) \setminus W_{0}$ contains $W_{1}$ for each $k$. Now we adopt the resulting $\{g_{k}\}_{k>0}$ as $\{g_{k;1}\}_{k>0}$. The desired properties for $n=1$ are easily checked.

Next, given the objects for $n-1$, we define the ones for $n$. First, since $V_{n-1}$ is $\epsilon$-shot, it is $G$-contractible; there exists $\{h_{k}\}_{k>0} \subseteq G$ such that   $\diam(h_{k} V_{n-1}) \rightarrow 0$. Up to subsequence, we may assume that $h_{k} V_{n-1}$ converges to a point $x_{n}$.  Up to a further subsequence, we may assume that $\{h_{k}S_{n-1}\}_{k>0}$, a sequence of finite sets of cardinality $\#S_{n-1}$, converges to a finite set $S'$. We then define $S_{n} := S' \cup \{x_{n}\}$. For each  $k$, we can take large enough  $i(k)$ such that \[
 h_{k} g_{i(k); n-1} (W_{0} \sqcup \ldots \sqcup W_{n-2}) \subseteq N_{1/k} (h_{k} S_{n-1}),\quad h_{k} g_{i(k); n-1} W_{n-1} \subseteq h_{k} V_{n-1}.
 \] Then $\{g_{k} := h_{k} g_{i(k);n-1}\}_{k>0}$ becomes a sequence in $G$ such that  $g_{k} (W_{0} \sqcup \ldots \sqcup W_{n-1}) \rightarrow S_{n}$.

For each $k$, we consider \[
g_{k}^{-1} (I_{1}) \setminus (W_{0} \sqcup \ldots \sqcup W_{n-1}), \,
g_{k}^{-1} (I_{2}) \setminus (W_{0} \sqcup \ldots \sqcup W_{n-1}),\,\ldots, \,g_{k}^{-1} (I_{N})\setminus (W_{0} \sqcup \ldots \sqcup W_{n-1}).
\]
These are disjoint elementary sets partitioning $S^{1} \setminus (W_{0} \sqcup \ldots \sqcup W_{n-1})$. Let $l(k)$-th one be the one with the greatest Lebesgue measure value. That measure value should be at least  $\frac{1}{N} (1 - \diam(W_{0} \sqcup \ldots \sqcup W_{n-1}))$. Note that  $l(k)$'s take values in a finite set $\{1, \ldots, N\}$. Up to subsequence we may assume that $l(1), l(2), \ldots$ are identical; then we define $V_{n}$ to be $I_{l(1)} = I_{l(2)} = \ldots$. Thanks to  Lemma \ref{lem:eltSet}, the following holds up to subsequence: there exists an elementary set $W_{n} \subseteq S^{1} \setminus (W_{0} \sqcup \ldots \sqcup W_{n-1})$ with Lebesgue measure $\frac{1}{2N}\big(1-\diam(W_{0} \sqcup \ldots \sqcup W_{n-1})\big)$ such that $g_{k}^{-1} (V_{n}) \setminus (W_{0} \sqcup \ldots \sqcup W_{n-1})$ contains  $W_{n}$ for each $k$. We now take the resulting $\{g_{k}\}_{k>0}$ as  $\{g_{k;n}\}_{k>0}$. The desired properties for $n$ follow.

We have now constructed $S_{n}, V_{n}, W_{n}, \{g_{k; n}\}_{k>0}$ satisfying Properties (1)--(4). 
In other words, we constructed for each $n$ an elementary set $K_{n} (:=W_{0} \sqcup \ldots \sqcup W_{n-1})$, a finite set $S_{n}$ and a sequence $\{g_{k;n}\}_{k>0}$ in $G$ such that $g_{k;n} K_{n} \rightarrow S_{n}$ as $k$ tends to infinity, and such that $\Leb(K_{n}) = 1 - (1-\frac{1}{2N})^{n+1}$. (We can now forget about $V_{n}$.)

We have not yet put restrictions on the size of $S_{n}$, but by modifying the choice of $\{g_{k; n}\}_{k>0}$ (while keeping $K_{n}:=W_{0} \sqcup \ldots \sqcup W_{n-1}$) we can make $\#S_{n} \le N$. To see this, suppose that $\#S_{n} > N$. Then one of $I_{1}, \ldots, I_{N}$ contains more than 2 points of $S_{n}$. Without loss of generality, suppose that $I_{1}$ does so. Because $I_{1}$ has length $\epsilon$ and is $G$-contractible, there exist $\{h_{k}\}_{k>0}$ in $G$ such that  $\diam(h_{k} I_{1}) \rightarrow 0$. Up to subsequence, we may assume that $h_{k} I_{1}$ converges to a point $x$. Moreover,  $\{h_{k} (S_{n} \setminus I_{1})\}_{k>0}$ is a sequence of sets of cardinality at most $\#S_{n} - 2$. Up to subsequence, we may assume that $h_{k} (S_{n} \setminus I_{1})$ converges to a finite set $F$ of cardinality at most $\#S_{n}-2$. We now set $S_{n}' := F \cup \{x\}$; then $h_{k} S_{n} \rightarrow S_{n}'$ holds. For each $k$, we choose large enough $i(k)$ such that \[
h_{k} g_{i(k); n} (K_{n}) \subseteq N_{1/k} (h_{k} S_{n}).
\] Then $\{h_{k} g_{i(k);n}\}_{k>0}$ is a sequence in $G$ such that $h_{k} g_{i(k); n} (K_{n})$ converges to  $S_{n}'$. Note that $\#S_{n}' \le \#S_{n} -1$. By applying this procedure inductively, we can make $\#S_{n}$ smaller than or equal to $N$.
Hence, we may assume that $\#S_{n} \le N$ for each $n$.

We thus have a sequence $(K_{n})_{n>0}$ of elementary sets, a sequence $(S_{n})_{n>0}$ of sets with cardinality at most $N$, and a sequence $(\{g_{k; n}\}_{k>0})_{n>0}$ of sequences in $G$ that satisfy the following for each $n$: \begin{enumerate}
\item $g_{k; n} K_{n} \rightarrow S_{n}$ as $k$ tends to infinity, and
\item $\Leb(K_{n}) \ge 1 - (1-1/2N)^{n+1}$.
\end{enumerate}
Now, by replacing  $(K_{n}, S_{n}, \{g_{k; n}\}_{k>0} )_{n>0}$ with a suitable subsequence, we may assume that $S_{n}$ converges to a finite set $S$. (This is where the uniform bound on $\#S_{n}$ is needed.) We then pick small enough  $\eta>0$ and consider the $\eta$-neighborhood $N_{\eta}(S)$ of $S$. Then $S_{n} \subseteq N_{\eta/2}(S)$ for suitably large $n$, and $g_{k(n); n} (K_{n}) \subseteq N_{\eta/2} (S_{n}) \subseteq N_{\eta}(S)$ for suitably large $k(n)$. Hence, $L_{n} := g_{k(n); n}^{-1} (S^{1} \setminus N_{\eta}(S))$ becomes a set of Lebesgue measure at most $1- \Leb(K_{n})$. In fact, this set is a union of $\#S$ closed intervals, each with length at most $1- \Leb(K_{n})$. Let $C_{n}$ be the set of centers of these intervals. Up to subsequence, we may assume that $C_{n}$ converges to a finite subset $C$ as $n$ tends to infinity. Because $L_{n}$ lies in the $(1-\Leb(K_{n}))$-neighborhood of $C_{n}$, $L_{n} \subseteq N_{\eta}(C)$ for suitably large $n$. 

In summary, we have constructed two finite sets $C$ and $S$; for any $\eta>0$, there exists a large enough $n$ and $k(n)$ such that $g_{k(n); n}$ sends $S^{1} \setminus N_{\eta}(C)$ into $N_{\eta}(S)$, and $g_{k;n}^{-1}$ does vice versa.

Now using Lemma  \ref{lem:disjtStab}, we can pick $f \in G$ such that $C$ and $f(S)$ to be disjoint. We can take $g \in G$ such that  $g(C \cup f(S))$ and $C$ are disjoint, and $h \in G$ such that $h \cdot f(S)$ and $C \cup f(S) \cup g^{-1} C$ are disjoint. Then $C$, $f(S)$, $g^{-1} C$, $h f(S)$ are pairwise disjoint finite sets, so we can take small $\eta>0$ such that $N_{\eta}(C), fN_{\eta}(S), g^{-1} N_{\eta}(C)$, $hfN_{\eta} (S)$ are mutually essentially disjoint. For this  $\eta$, we can take $g_{k(n); n}$ as described above. Then $(fg_{k(n); n}, h fg_{k(n); n} g)$ becomes a Schottky pair in $G$  associated with $N_{\eta}(C), fN_{\eta}(S), g^{-1} N_{\eta}(C)$, $hfN_{\eta} (S)$, each of which is a finite union of intervals. 
\end{proof}

\section{Schottky sets and random walks}\label{section:SchottkyRandom}

We now use the properties of Schottky pairs to study random walks on $\Homeo(S^{1})$. Most of the time, the Borel measure $\mu$ on $\Homeo(S^{1})$ generating the random walk is not purely atomic. In this case, the semigroup $\llangle \supp \mu \rrangle$ might not charge atom to Schottky pairs. To accommodate this, we will consider a continuous family of Schottky pairs associated with common intervals/open sets.

\begin{dfn}\label{dfn:hyperbolicElt}
Let $f$ be a circle homeomorphism and let $U_{1}, U_{2}$ be disjoint subsets of $S^{1}$. If \[
f\big(S^{1} \setminus U_{1}\big) \subseteq U_{2}, \,\,f\big(S^{1} \setminus U_{2}\big) \subseteq U_{1}
\]
holds, we say that $f$ is a \emph{$(U_{1}, U_{2})$-hyperbolic map}. We denote the collection of $(U_{1}, U_{2})$-hyperbolic maps by $\mathfrak{S}(U_{1}, U_{2})$.
\end{dfn}

If $U_{1}, U_{2}$ are open sets (closed sets, resp.), then $\mathfrak{S}(U_{1}, U_{2})$ is also open (closed, reps.) with respect to the $C^{0}$-topology.

\begin{dfn}\label{dfn:SchottkyEss}
For essentially disjoint subsets $I_{1}, \ldots, I_{N}, J_{1}, \ldots, J_{N}$ of $S^{1}$, we call \[
S := \mathfrak{S}(I_{1}, J_{1}) \sqcup \ldots \sqcup \mathfrak{S}(I_{N}, J_{N}) \subseteq \Homeo(S^{1})
\]
the \emph{Schottky set} associated with $I_{1}, \ldots, I_{N}, J_{1}, \ldots, J_{N}$. We call $N$ the \emph{resolution} of $S$. For each $s \in S$ there exists unique $i$ such that $s \in \mathfrak{S}(I_{i}, J_{i})$; for such an $i$, we write $I(s) := I_{i}$ and $J(s) := J_{i}$.

We define the \emph{multiplicity} $\zeta(S)$ of $S$ by \[\begin{aligned}
\zeta(S) &:= \sup \big\{ \zeta(I_{1}), \ldots,  \zeta(I_{N}), \zeta(J_{1}),\ldots,  \zeta(J_{N}), \big\} \\
&=  \sup \big\{ \# (\textrm{connected components of $U$}) : U = I_{1}, \ldots, I_{N}, J_{1}, \ldots, J_{N}\big\}.
\end{aligned}
\]

If there is a subset $\mathcal{I}$ that is essentially disjoint from  $I_{1} \cup \ldots \cup I_{N}$ and essentially contains $J_{1} \cup \ldots \cup J_{N} \subseteq \inte \mathcal{I}$, then we call it a \emph{median} for $S$.
\end{dfn}

When $\zeta(S)$ is finite, we say that $S$ is a Schottky set associated with finite unions of intervals. If $\zeta(S) = 1$, we say that $S$ is a Schottky set associated with intervals. In practice, we will almost always use the Schottky sets with finite multiplicity only.

Let $S$ be a Schottky set associated with essentially disjoint sets $I_{1}, \ldots, I_{N}, J_{1}, \ldots, J_{N}$. Then for a suitably small $\epsilon>0$,  $\mathcal{I} = N_{\epsilon}( J_{1} \cup \cdots \cup J_{n})$ serves as a median. In a special case that $I_{1}, \ldots, I_{N}$ and $J_{1}, \ldots J_{N}$ are separated in opposite semicircles of $S^{1}$, one can take an \emph{interval} median. The existence of an interval median for a Schottky set for a given probability $\mu$ turns out to be an essential ingredient for the exponential synchronization. In fact, if there exists a Schottky set in a semigroup $G$ with an interval median, then $G$ has proximal action and there exists another Schottky set \emph{associated with intervals} in $G$.

\begin{dfn}\label{dfn:SchottkyUniform}
Let $\epsilon>0$ and let $S$ be a Schottky set associated with essentially disjoint sets   $I_{1}, \ldots, I_{N}, J_{1}, \ldots, J_{N}$. If a (Borel) measure $\mu$ on $\Homeo(S^{1})$ satisfies  \[
\mu\big(\mathfrak{S}(I_{i}, J_{i})\big) >\epsilon/N \,\, \textrm{for each}\,\, i=1, \ldots, N, 
\]
then we say that $\mu$ is an \emph{$(S, \epsilon)$-admissible measure}; if $\mu$ satisfies \[
\mu\big(\mathfrak{S}(I_{i}, J_{i})\big) =1/N \,\, \textrm{for each}\,\, i=1, \ldots, N, 
\]
then we say that $\mu$ is \emph{Schottky-uniform} on $S$.

\end{dfn}

Note that there can be several Schottky-uniform measures on a single Schottky set (because $\mathfrak{S}(I, J)$ is not a singleton for most $I$ and $J$). Let us now rephrase the weak Tits alternative discussed in Section \ref{section:weakTits}.

\begin{cor}\label{cor:weakTitsProximal}
Let $\mu$ be a probability measure on $\Homeo(S^{1})$ such that the semigroup $\llangle \supp \mu \rrangle$ acts proximally on $S^{1}$ without a global fixed point. Then there exists $N$ such that $(\supp \mu)^{\ast N}$ contains a Schottky pair associated with intervals.
\end{cor}

\begin{proof}
By Theorem \ref{thm:proximalEquiv}, there exists $k, l \in \Z_{>0}$, $f \in (\supp \mu)^{\ast k}$ and $g \in (\supp \mu)^{\ast l}$ such that $(f, g)$ forms a Schottky pair associated with essentially disjoint intervals $I_{1}, I_{2}, J_{1}, J_{2}$. Then $(f^{l}, g^{k})$ is also a Schottky pair associated with  $I_{1}, I_{2}, J_{1}, J_{2}$, and $f^{l}, g^{k}$ belongs to $(\supp \mu)^{\ast kl}$. 
\end{proof}

It is not hard to ``amplify'' a Schottky pair into larger Schottky set.

\begin{lem}\label{lem:amplify}
Let $\mu$ be a probability measure on $\Homeo(S^{1})$ such that $\supp \mu$ contains a Schottky pair ssociated with essentially disjoint subsets $I_{1}, I_{2}, J_{1}, J_{2}$ of $S^{1}$, and let $\zeta = \sup \big\{\zeta(I_{1}), \zeta(I_{2}), \zeta(J_{1}), \zeta(J_{2})\big\}$. Then for each $N \in \Z_{>0}$ there exists $m \in \Z_{>0}$, $\epsilon>0$ and a Schottky set $S$ with resolution $N$ and with multiplicity $\le 2\zeta$ such that $\mu^{\ast m}$ is $(S, \epsilon)$-admissible.

If $\zeta=1$, then $S$ can be taken to have an interval median and have multiplicity $1$.
\end{lem}

\begin{proof}
Let $(f_{1}, f_{2})$ be a Schottky pair in $\supp \mu$ associated with essentially disjoint sets $I_{1}, I_{2}, J_{1}, J_{2}$ on $S^{1}$. Let $\zeta = \sup\big\{\zeta(I_{1}), \zeta(I_{2}), \zeta(J_{1}), \zeta(J_{1})\big\}$.

We first make a reduction when $\zeta = 1$, i.e., in the case that $I_{i}, J_{i}$ are intervals. If there exists an interval $\mathcal{I}$ such that $I_{1} \cup I_{2} \subseteq \mathcal{I}$ and $J_{1} \cup J_{2} \subseteq S^{1} \setminus \mathcal{I}$, we keep it. If there exists no such interval, it means that $I_{1}, J_{1}, I_{2}, J_{2}$ are arranged clockwise or counterclockwise along $S^{1}$. In either case, we can take an interval $\mathcal{I}$ that essentially contains $J_{2}$ but does not essentially intersect with  $I_{1}, J_{1}$ and $I_{2}$. This $\mathcal{I}$ is not a median for $(f_{1}, f_{2})$ but is a median for $\{f_{2}^{2}, f_{2} f_{1}\}$. Indeed, for \[
f_{1}' := f_{2} f_{1}, \,\,f_{2}' := f_{2}^{2}, \,\,I_{1}' := I_{1}, \,\,J_{1}' :=f_{2}J_{1}, \,\,I_{2}' := I_{2}, \,\,J_{2}' := f_{2} J_{2},
\]
we observe $f_{i} ( S^{1} \setminus I_{i}') \subseteq J_{i}', f_{i}^{-1}(S^{1} \setminus J_{i}') \subseteq I_{i}'$ and \[
\overline{J_{1}'} \cap \overline{J_{2}'} = f_{2} (\overline{J_{1}} \cap \overline{J_{2}}) = \emptyset, \,\, (\overline{I_{1}'} \cup \overline{I_{2}'}) \cap (\overline{J_{1}'} \cup \overline{J_{2}'}) \subseteq  (\overline{I_{1}} \cup \overline{I_{2}}) \cap f_{2} (S^{1} \setminus \inte I_{2}) \subseteq (\overline{I_{1}\cup I_{2}})  \cap \overline{J_{2}} = \emptyset.
\] 
Furthermore, $\mathcal{I}$ is essentially disjoint with $I_{1}$ and $I_{2}$ but its interior contains $\bar{J_{2}}$, which in turn contains $\bar{J_{1}'}$ and $\bar{J_{2}'}$. Hence, $\mathcal{I}$ is a median for  $\{f_{1}', f_{2}'\}$.

Hence, in the case $\zeta = 1$, by replacing $(f_{1}, f_{2})$ with $(f_{2}f_{1}, f_{2}^{2})$ and by replacing $\mu$ with $\mu^{\ast 2}$, we may assume that the Schottky pair has interval median $\mathcal{I}$. If $\zeta \neq 1$, we take $\mathcal{I} = N_{\epsilon}(J_{1} \cup J_{2})$ for a small enough $\epsilon$ as a median; note that $\mathcal{I}$ has at most $2\zeta$ components.

Let us now take some $2^{N}$ homeomorphisms parametrized by  $\{1, 2\}^{N}$. Given  $\sigma \in \{1, 2\}^{N}$, we construct
\[
f_{\sigma} := f_{\sigma(1)} f_{\sigma(2)} \cdots f_{\sigma(N)}, \,\, I_{\sigma} := f_{\sigma}^{-1} \left(\overline{S^{1} \setminus \mathcal{I}} \right), \,\, J_{\sigma} := f_{\sigma} \mathcal{I}.
\]
Note that $f_{\sigma}^{2}$ sends $S^{1} \setminus I_{\sigma}$ into $J_{\sigma}$ and $f_{\sigma}^{-2}$ sends $S^{1} \setminus J_{\sigma}$ into $I_{\sigma}$. Furthermore, we observe\begin{equation}\label{eqn:JMedian}
\begin{aligned}
\overline{J_{\sigma}} &=  f_{\sigma(1)}\cdots f_{\sigma(N)} \overline{\mathcal{I}} \subseteq  f_{\sigma(1)} \cdots f_{\sigma(N-1)}J_{\sigma(N)} \\
&\subseteq f_{\sigma(1)} \cdots f_{\sigma(N-1)} (\inte \mathcal{I}) = f_{\sigma(1)} \cdots f_{\sigma(N-2)}(J_{\sigma(N-1)}) \\
 & \subseteq \ldots \subseteq J_{\sigma(1)} \subseteq \inte \mathcal{I}.
 \end{aligned}
\end{equation}
For a similar reason we have $\overline{I_{\sigma}} \subseteq \inte (S^{1} \setminus \mathcal{I})$. In short, $\overline{I_{\sigma}}$ and  $\overline{J_{\sigma'}}$ does not overlap with each other for any $\sigma, \sigma' \in \{1, 2\}^{N}$. Now let us take distinct elements $\sigma$ and $\sigma'$ of $\{1, 2\}^{N}$. Then there exists $i$ such that $\sigma(i) \neq \sigma'(i)$, and we take a minimal one. Then \[
\overline{J_{\sigma}} \subseteq f_{\sigma(1)} \cdots f_{\sigma(i-1)} \overline{J_{\sigma(i)}}, \,\,\overline{J_{\sigma'}} \subseteq f_{\sigma'(1)} \cdots f_{\sigma'(i-1)} \overline{J_{\sigma'(i)}}
\]
should not intersect. For a similar reason, $\overline{I_{\sigma}}$ and $\overline{I_{\sigma'}}$ are disjoint. To sum up, the $2 \cdot 2^{N}$ sets $\{ I_{\sigma}, J_{\sigma} : \sigma \in \{1, 2\}^{N}\}$ are all pairwise essentially disjoint. It is clear that  $(\supp \mu^{\ast 2N})$ intersects with each of  $\mathcal{S}(I_{\sigma}, J_{\sigma})$. Furthermore, Display \ref{eqn:JMedian} (and its counterpart for $\overline{I_{\sigma}}$'s) tells us that $\mathcal{I}$ works as a median. Finally, note that $\mathcal{I}$ and $\mathcal{S}^{1} \setminus \mathcal{I}$ have the same multiplicity, which bounds the number of components of $I_{\sigma}$ and $J_{\sigma}$ for each $\sigma$. (When $\zeta = 1$, $I_{\sigma}, J_{\sigma}$'s are intervals.)

We now take very small $\eta>0$ and let $I_{\sigma}' := N_{\eta}(I_{\sigma}), J_{\sigma}' := N_{\eta}(J_{\sigma})$. If $\eta$ is small enough, $\{I_{\sigma}', J_{\sigma}' : \sigma \in \{1, 2\}^{N}\}$ are mutually essentially disjoint, $\cup_{\sigma} I_{\sigma}'$ is essentially disjoint from $\mathcal{I}$ and $\cup_{\sigma} J_{\sigma}'$ is essentially contained in $\mathcal{I}$. Also, the maximum number of components of $\{I_{\sigma}', J_{\sigma}'\}$ is no bigger than the maximum for $\{I_{\sigma}, J_{\sigma}\}$.

Now for each $\sigma \in \{1, 2\}^{N}$, $\mathcal{S}(I_{\sigma}', J_{\sigma}')$ is an \emph{open} subset (of $\Homeo(S^{1})$) that intersects with  $\supp \mu^{\ast 2N}$, as it contains $f_{\sigma}^{2}$. Hence, it attains a strictly positive $\mu^{\ast 2N}$-value. This implies that $S' = \cup_{\sigma \in \{1, 2\}^{N}} \mathcal{S}(I_{\sigma}', J_{\sigma'})$ is a Schottky set with a median $\mathcal{I}$, with resolution $2^{N} \ge N$ and with multiplicity at most $2\zeta$. Moreover, $\mu^{\ast 2N}$ is $(S', \epsilon)$-admissible for some $\epsilon>0$. One can now consider $S = \cup_{\sigma \in A} \mathcal{S}(I_{\sigma}', J_{\sigma}')$ for some subset $A$ of $\{1, 2\}^{N}$ with cardinality $N$ to conclude a similar property.
\end{proof}

We can now prove the converse of Theorem \ref{thm:proximalEquiv}.

\begin{thm}\label{thm:proximalEquivConverse}
Let $G$ be a subsemigroup of $\Homeo(S^{1})$ that contains a Schottky pair associated with intervals. Then $G$ acts proximally and does not have a global fixed point on $S^{1}$.
\end{thm}

\begin{proof}
Let $(f_{1}, f_{2})$ be the Schottky pair in $G$ associated with essentially disjoint intervals $I_{1}, I_{2}, J_{1}, J_{2}$. Let $x \in S^{1}$ be an arbitrary point. Since the intervals are are disjoint, we have either $x \notin I_{1}\cup J_{1}$ or $x \notin I_{2} \cup J_{2}$. In the first case, we have $f_{1}(x) \in J_{1}$ and hence $f_{1}(x) \neq x$; in the second case, we have $f_{2}(x) \in J_{2}$ and hence $f_{2}(x) \neq x$. In both cases, $x$ is not a common fixed of $f_{1}$ and $f_{3}$.

Next, let $x, y \in S^{1}$ be arbitrary two points and let $N > 10$. To show this, consider a probability measure $\mu$ with $\supp \mu = \{f_{1}, f_{2}\}$ ($\mu(f_{1}) = \mu(f_{2}) = 1/2$ will do). Then by Lemma \ref{lem:amplify}, there exists $m, \epsilon >0$ and  a Schottky set $S$ with resolution $N$ and with multiplicity $1$ such that $\mu^{\ast m}$ is $(S, \epsilon)$-admissible. Let $I_{1}, \ldots, I_{N}, J_{1}, \ldots, J_{N}$ be the (essentially disjoint) intervals that $S$ is associated with. Then each $\mathfrak{S}(I_{i}, J_{i})$ intersects with $\supp \mu^{\ast m} \subseteq G$.

Since $I_{i}$'s are disjoint, we have \[
\# \big( \mathcal{A} := \{i : \textrm{$I_{i}$ contains $x$ or $y$}\}\big) \le 2.
\]
Furthermore, since $J_{i}$'s are disjoint, \[
\frac{1}{\sqrt{N}} \cdot \# \left(\mathcal{B} := \big\{ i : \Leb(J_{i}) > 1/\sqrt{N} \big\} \right) \le \Leb(S^{1}) = 1.
\]
Hence, $\mathcal{A} \cup \mathcal{B}$ has at most $\#\sqrt{N} + 2 < N$ elements, and we can pick an index $i$ outside it. For that $i$, we conclude that $d(gx, gy) < \Leb(J_{i}) \le 1/\sqrt{N}$ for each $g \in \mathfrak{S}(I_{i}, J_{i})$. Since $G$ intersects with $\mathfrak{S}(I_{i}, J_{i})$, we conclude that $\inf_{g \in G} d(gx, gy) < 1/\sqrt{N}$. By sending $N$ to infinity, we conclude that the action of $G$ is proximal.
\end{proof}

\begin{cor}\label{cor:measureConvTarget}
Let $\mu$ be a probability measure on $\Homeo(S^{1})$ such that the semigroup $\supp \mu$ does not admit any invariant probability measure on $S^{1}$. Then there exist an open neighborhood $\mathcal{U}$ of $\mu$ in the space of probability measures on $\Homeo(S^{1})$, $m, \zeta \in \Z_{>0}$, $\epsilon>0$ and a Schottky set $S$ with multiplicity $\zeta$ and with resolution $N \ge 2500 \zeta^{2}$ such that $\mu'^{\ast m}$ is $(S, \epsilon)$-admissible for each $\mu' \in \mathcal{U}$.

If the action of $\supp \mu$ is proximal without a global fixed point, we can moreover require that $S$ has an interval median.
\end{cor}

\begin{proof}
Let us first assume that $\mu$ does not have an invariant probability measure. By Theorem \ref{thm:titsAltGen}, there exists $m_{1}, \zeta' \in \Z_{>0}$ such that $(\supp \mu)^{\ast m_{1}} \subseteq \supp \mu^{m_{1}}$ contains a Schottky pair associated with essentially disjoint sets $I_{1}, I_{2}, J_{1}, J_{2}$ satisfying \[
\sup \{\zeta(I_{1}),\zeta(I_{2}),\zeta(J_{1}), \zeta(J_{2})\} \le \zeta'.
\]
We now apply Lemma \ref{lem:amplify} (with $N = 10^{4} \zeta'^{2}$) to obtain $m_{2} \in \Z_{>0}$, $\epsilon>0$ and a Schottky set $S$ with resolution $N = 10^{4} \zeta'^{2}$ and with multiplicity at most $\zeta := 2\zeta'$ such that $\mu^{\ast m_{1} m_{2}}$ is $(S, \epsilon)$-admissible. Here, let us write $S = \cup_{i=1}^{N} \mathfrak{S}(I_{i}, J_{i})$. Since $I_{i}, J_{i}$'s are essentially disjoint, we can slightly enlarge them if necessary to make them open sets, while being essentially disjoint. Then $\mu^{\ast m_{1} m_{2}}$ is still $(S, \epsilon)$-admissible. Moreover, the set $\mathcal{V}$ of $(S, \epsilon)$-admissible probability measures is an open set, as $I_{i}$, $J_{i}$'s are now open. Since the convolution operator on the space of probability measures on $\Homeo(S^{1})$ is continuous \cite[Proposition 3.1]{MR413217}, there exists a neighborhood $\mathcal{U}$ of $\mu$ such that $\mathcal{U}^{\ast m_{1}m_{2}}$ is contained in $\mathcal{V}$.

The proximal case can be handled by Corollary \ref{cor:weakTitsProximal} and Lemma \ref{lem:amplify}.
\end{proof}

\subsection{Exponential Synchronization}\label{subsect:expSync}

We now present a central proposition that follows from the pivoting technique. We postpone its proof to the next section.

\begin{prop}\label{prop:pivotingPrep}
For each $\epsilon>0$, $m \in \Z_{>0}$, there exists $\kappa = \kappa(\epsilon, m) > 0$ that satisfies the following.

Let $S$ be a Schottky set with multiplicity $\zeta$, with resolution $N \ge 2500\zeta^{2}$ and with a median $\mathcal{I}$. Let $\mu$ be a probability measure $\mu$ such that $\mu^{\ast m}$ is an $(S, \epsilon)$-admissible measure. 

Then for each $n \in \Z_{>0}$, there exists a probability space $\Omega_{n}$, a measurable subset $A_{n} \subseteq \Omega_{n}$, a measurable partition $\mathcal{P}_{n} = \{\mathcal{E}_{\alpha}\}_{\alpha}$ of the set $A_{n}$, and $\Homeo(S^{1})$-valued random variables \[
Z_{n}, \{w_{i}\}_{i=0, \ldots, \lfloor \kappa n \rfloor}, \{s_{i}\}_{i =1, \ldots, \lfloor \kappa n \rfloor }
\] such that the following holds: \begin{enumerate}
\item $\Prob(A_{n}) \ge 1 - \frac{1}{\kappa}e^{-\kappa n}$.
\item Restricted on each equivalence $\mathcal{E} \in \mathcal{P}_{n}$, $w_{0}, \ldots, w_{\lfloor \kappa n \rfloor}$ are \emph{constant} homeomorphisms and $s_{i}$ are independently distributed according to a Schottky-uniform measures on $S$.
\item On $A_{n}$, $w_{i} \mathcal{I} \subseteq \mathcal{I}$ holds for each $i=1, \ldots, \lfloor \kappa n \rfloor - 1$.
\item $Z_{n}$ is distributed according to $\mu^{\ast n}$ and $Z_{n}  = w_{0} s_{1} w_{1} \cdots s_{\lfloor \kappa n \rfloor } w_{\lfloor\kappa n \rfloor}$ holds on $A_{n}$.
\end{enumerate}
\end{prop}

We will prove the exponential synchronization assuming this proposition. \emph{From now on, we fix a measure $\Len$ on $S^{1}$. For Theorem \ref{thm:main1}, \ref{thm:main3}, \ref{thm:main4}, these can be taken as the Lebesgue measure. For Theorem \ref{thm:main5}, one can plug in an arbitrary measure}.

\begin{lem}\label{lem:expShrink1}
Let $w \in \Homeo(S^{1})$, let $S$ be a Schottky set with median $\mathcal{I}$ and with resolution $N$, and let $\mu$ be a Schottky-uniform measure on $S$. Then we have \[
\Prob_{s \sim \mu} \left( \Len(w s\mathcal{I}) \le \frac{1}{\sqrt{N}} \Len(w\mathcal{I}) \right) \ge 1 - \frac{1}{\sqrt{N}}.
\]
\end{lem}

\begin{proof}
First, let us write $S = \mathfrak{S}(I_{1}, J_{1}) \cup \ldots \cup \mathfrak{S}(I_{N}, J_{N})$ for some essentially disjoint sets   $I_{1}, \ldots, I_{N}, J_{1}, \ldots, J_{N}$. Recall that elements of  $\mathfrak{S}(I_{i}, J_{i})$ send $\mathcal{I}$ into  $J_{i}$. $(\ast)$ Note that $wJ_{1}, \ldots, w_{J_{N}}$ are disjointly contained in  $w\mathcal{I}$. Hence, the sum of their ``size" is no greater than that of  $w\mathcal{I}$, which implies that \[
Ind :=  \left\{ i : \Len (wJ_{i}) \ge \frac{1}{\sqrt{N}} \Len(w\mathcal{I}) \right\}
\]has at most $\sqrt{N}$ elements. For each $i \notin Ind$, ($\ast$) tells us that  $ \Len(w s\mathcal{I}) \le \frac{1}{\sqrt{N}} \Len(w\mathcal{I})$ for each $s \in \mathfrak{S}(I_{i}, J_{i})$. Summing up, we observe \[
\begin{aligned}
\Prob_{s \sim \mu}  \left( \Len(w s\mathcal{I}) \le \frac{1}{\sqrt{N}} \Len(w\mathcal{I}) \right)&\ge \Prob_{s \sim \mu} \big( s \in \mathfrak{S}(I_{i}, J_{i}) : i \notin Ind \big) \\
&\ge \frac{1}{N} (N-\sqrt{N}) = 1 - \frac{1}{\sqrt{N}}.
\end{aligned}
\]
\end{proof}

\begin{lem}\label{lem:expShrink2}
Let $S$ be a Schottky set with median $\mathcal{I}$ and with resolution $N \ge 100$. Fix homeomorphisms $w_{0}, \ldots, w_{n} \in \Homeo(S^{1})$ that satisfy  $w_{i} \mathcal{I} \subseteq \mathcal{I}$ for $i=1, \ldots, n$. Then for random variables $s_{1}, \ldots, s_{n}$ independently distributed according to Schottky-uniform measures on $S$, we have \[
\Prob\left( \Len\big(w_{0} s_{1} w_{1} \cdots s_{n} w_{n} \cdot \mathcal{I}\big) \le \frac{1}{N^{n/4}} \Len\big(w_{0} \mathcal{I}\big) \right) \ge 1 - e^{-n/4}.
\]
\end{lem}

\begin{proof}
Again, we start by writing $S = \mathfrak{S}(I_{1}, J_{1}) \cup \ldots \cup \mathfrak{S}(I_{N}, J_{N})$. Note that for each $i$, each element $s$ of $\mathfrak{S}(I_{i}, J_{i})$ sends $\mathcal{I}$ into $J_{i}$ and satisfies $s\mathcal{I} \subseteq J_{i} \subseteq \mathcal{I}$. In other words, the inclusion\[
W_{0} \mathcal{I}\supseteq W_{0} s_{1} \mathcal{I} \supseteq W_{1} \mathcal{I} \supseteq W_{1} s_{1} \mathcal{I} \supseteq \ldots \supseteq W_{n} \mathcal{I}  \quad \big(W_{k} = W_{k}(s_{0}, \ldots, s_{k}) := w_{0} s_{1} w_{1} \ldots s_{k} w_{k} \big)
\]
holds regardless of the values of $s_{i}$'s.

Now fixing $0 \le k \le n-1$ and the choices of $\{s_{i} : 1 \le i \le k\}$, we observe that\[
\Prob_{s_{k+1}\sim \textrm{Schottky-uniform on $S$}} \left( \Len (W_{k} s_{k+1} \mathcal{I}) \le \frac{1}{\sqrt{N}} \Len(W_{k}\mathcal{I}) \right) \ge 1-\frac{1}{\sqrt{N}}
\]
thanks to Lemma \ref{lem:expShrink1}. In other words, for \[
E_{k} := \left\{(s_{1}, \ldots, s_{k}) : \Len (W_{k-1} s_{k} \mathcal{I}) \le \frac{1}{\sqrt{N}} \Len(W_{k-1} \mathcal{I}) \right\},
\]
we have $\Prob(E_{k+1} | s_{1}, \ldots, s_{k}) \ge 1 - 1/\sqrt{N}$ regardless of the values of $s_{1}, \ldots, s_{k}$. Summing up these conditional probabilities, we obtain \begin{equation}\label{eqn:ekSumEst}
\Prob \left( \sum_{k=1}^{n} 1_{E_{k}} \ge n/2\right) \ge \Prob\left( B(n, 1-1/\sqrt{N}) \ge n/2 \right).
\end{equation}
Here, $B(n, 1-1/\sqrt{N})$ denotes the binomial random variable, the sum of $N$ independent Bernoulli random variables with expectation $1-1/\sqrt{N}$. We use Markov's inequality to estimate the latter:\[
\begin{aligned}
e^{-n/2} \cdot \Prob\left( B(n, 1-1/\sqrt{N}) \le n/2 \right) \le \E\left[e^{- B(n, 1-1/\sqrt{N})}\right] \le \left( \frac{1}{\sqrt{N}} + e^{-1} \right)^{n}.
\end{aligned}
\]
Here, the assumption $\sqrt{N} \ge 10$ implies $1/\sqrt{N}+ e^{-1} \le e^{-3/4}$. This leads to the estimate $\Prob \left(B(n, 1-1/\sqrt{N}) \le n/2 \right) \le e^{-n/4}$. Combining this with Inequality \ref{eqn:ekSumEst}, we can conclude the proof.
\end{proof}

We have another analogous computations.

\begin{lem}\label{lem:expShrink3}
Let  $x, y \in S^{1}$, let $w \in \Homeo(S^{1})$, let $S$ be a Schottky set with median $\mathcal{I}$ and with resolution $N$, and let $\mu$ be a Schottky-uniform measure on $S$. Then we have
\[
\Prob_{s \sim \mu} \big( \{x, y\} \cap s \mathcal{I}= \emptyset  \big) \ge 1 - 2/N
\]\end{lem}

\begin{proof}
Let $S = \mathfrak{S}(I_{1}, J_{1}) \cup \ldots\cup \mathfrak{S}(I_{N}, J_{N})$. Then for each $i$, every element of $\mathfrak{S}(I_{i}, J_{i}) \in S$ sends $\mathcal{I}$ into $J_{i}$. Since $J_{1}, \ldots, J_{N}$ are disjoint, $Ind := \{ i : \{x, y\} \cap J_{i} \neq \emptyset\}$ has cardinality at most 2. This implies  \[
\begin{aligned}
\Prob_{s \sim \mu} \left( \{x, y\} \cap I = \emptyset   \right) \ge \Prob_{s \sim \mu} \big( s \in \mathfrak{S}(I_{i}, J_{i}) : i \notin Ind \big) \ge \frac{1}{N} (N-2).
\end{aligned}
\]
\end{proof}

\begin{lem}\label{lem:expShrink4}
Let  $x, y \in S^{1}$, let $w \in \Homeo(S^{1})$ and  let $S$ be a Schottky set with median $\mathcal{I}$ and with resolution $N \ge 6$. Fix homeomorphisms $w_{0}, \ldots, w_{n} \in \Homeo(S^{1})$ such that $w_{i} \mathcal{I} \subseteq \mathcal{I}$ for $i=1, \ldots, n$. Then for random variables $s_{1}, \ldots, s_{n}$ independently distributed according to Schottky-uniform measures on $S$, we have \[
\Prob\big( \{x, y\} \cap w_{0} s_{1} w_{1} \cdots s_{n} w_{n}  \mathcal{I}= \emptyset \big) \ge 1 - e^{-n}
\]
\end{lem}

\begin{proof}
As in the proof of Lemma \ref{lem:expShrink2}, \[
W_{0} I \supseteq W_{0} s_{1} \mathcal{I} \supseteq W_{1} \mathcal{I} \supseteq W_{1} s_{1} \mathcal{I}\supseteq \ldots \supseteq W_{n} \mathcal{I}  \quad \big(W_{k} = W_{k}(s_{0}, \ldots, s_{k}) := w_{0} s_{1} w_{1} \ldots s_{k} w_{k} \big)
\]
holds regardless of the choices of $s_{i}$'s. Furthermore, when $0 \le k \le n-1$ and $\{s_{i} : 1 \le i \le k\}$ are given, \[
\Prob_{s_{k+1} \sim \textrm{Schottky-uniform on $S$} } \left( s_{k+1}\mathcal{I} \cap \{W_{k}^{-1}x, W_{k}^{-1}y\} =\emptyset \right) \ge 1- 2/N
\]
holds by Lemma \ref{lem:expShrink3}. In other words, if we define \[
E_{k} := \big\{ (s_{1}, \ldots, s_{k}) : \{x, y\} \cap W_{k-1} s_{k} \mathcal{I} = \emptyset \big\},
\]
then we have $\Prob(E_{k+1} | s_{1}, \ldots, s_{k}) \ge 1- 2/N$ for every choices of $s_{1}, \ldots, s_{k}$. This leads to \[
\Prob\big( \{x, y\} \cap W_{n} \mathcal{I} = \emptyset \big) \ge \Prob \left( E_{1} \cup \ldots \cup E_{n} \right) \ge 1 - (2/N)^{n} \ge 1-e^{-n}.
\]
\end{proof}

We can interpret the above lemma in the following way. Let $S=\cup_{i} \mathfrak{S}(I_{i}, J_{i})$ be a Schottky set with median $\mathcal{I}$ and with resolution $N \ge 6$. Then  $\check{S} := \cup_{i} \mathfrak{S}(J_{i}, I_{i})$ becomes another Schottky set with median $S^{1} \setminus \mathcal{I}$. Now, given a Schottky-uniform measure $\mu$ on $S$, the measure $\check{\mu}$ defined by $\check{\mu}(\cdot) := \mu(\cdot^{-1})$ becomes a Schottky-uniform measure on $\check{S}$. Finally, consider some homeomorphisms $w_{0}, \ldots, w_{n}$ that satisfy the following equivalent condition: \[
w_{i} \mathcal{I} \subseteq \mathcal{I} \,\,\textrm{for}\,\, i= 0, \ldots, n-1 \Leftrightarrow w_{i}^{-1} (S^{1} \setminus \mathcal{I}) \subseteq (S^{1} \setminus \mathcal{I})\,\,\textrm{for}\,\, i= 0, \ldots, n-1 .
\]
Now by applying Lemma \ref{lem:expShrink4}, we observe for arbitrary $x, y \in S^{1}$ that \[
\Prob_{\textrm{$s_{i}^{-1}$ independently Schottky-uniform  on $\check{S}$}}  \big( \{x, y\} \cap w_{n}^{-1} s_{n}^{-1} w_{n-1}^{-1} \cdots s_{1}^{-1} w_{0}^{-1} (S^{1} \setminus \mathcal{I}) =\emptyset \big) \ge 1-e^{-n}.
\]
Equivalently, we can say  \[
\Prob_{\textrm{$s_{i}$ independently Schottky-uniform on  $S$}}  \big(w_{0} s_{1} w_{1} \cdots s_{n} w_{n} \cdot \{x, y\}  \subseteq \mathcal{I} \big) \ge 1-e^{-n}.
\]
We record this as a separate lemma:

\begin{lem}\label{lem:expShrink4.5}
Let  $x, y \in S^{1}$, let $w \in \Homeo(S^{1})$ and  let $S$ be a Schottky set with median $\mathcal{I}$ and with resolution $N \ge 6$. Fix homeomorphisms $w_{0}, \ldots, w_{n} \in \Homeo(S^{1})$ such that $w_{i} \mathcal{I} \subseteq \mathcal{I}$ for $i=0, \ldots, n-1$. Then for random variables $s_{1}, \ldots, s_{n}$ independently distributed according to Schottky-uniform measures on $S$, we have \[
\Prob  \big(w_{0} s_{1} w_{1} \cdots s_{n} w_{n} \cdot \{x, y\}  \subseteq \mathcal{I} \big) \ge 1-e^{-n}.
\]
\end{lem}

We can now prove Theorem \ref{thm:main4}.

 \begin{thm}\label{thm:expShrink}
 
For each $\epsilon>0$ and $m \in \Z_{>0}$, there exists  $\kappa_{1} = \kappa_{1}(\epsilon, m) > 0$ such that the following holds.

Let $S$ be a Schottky set with resolution $N$, with multiplicity 1 and with an interval median $\mathcal{I}$. Let $\mu$ be a probability measure such that $\mu^{\ast m}$ is $(S, \epsilon)$-admissible. Then for every $x, y \in S^{1}$ and for every $n \in \Z_{>0}$ we have \[
\Prob_{Z_{n} \sim \mu^{\ast n}} \big( d(Z_{n}x, Z_{n}y) \le e^{-\kappa_{1} n} \big) \ge 1 -\frac{1}{\kappa_{1}} e^{-\kappa_{1} n}.\]
\end{thm}

\begin{proof}
For this proof we will employ the Lebesgue measure, i.e., $\Len = \Leb$.

Let $\kappa = \kappa(\epsilon, m)$ be as in Proposition \ref{prop:pivotingPrep}. Next, given a positive integer $n$, we fix the probability space $\Omega_{n}$, the measurable subset $A_{n}$, the measurable partition $\mathcal{P}_{n} = \{\mathcal{E}_{\alpha}\}_{\alpha}$ of $A_{n}$ and the random variables $Z_{n}$, $w_{0}, \ldots, w_{\lfloor \kappa n  \rfloor}, s_{1}, \ldots, s_{\lfloor \kappa n \rfloor}$ as in Proposition \ref{prop:pivotingPrep}.

Let $\mathcal{E} \in \mathcal{P}_{n}$ be an arbitrary equivalence class. Restricted on $\mathcal{E}$, $w_{0}, \ldots, w_{\lfloor \kappa n  \rfloor}$ are constant homeomorphisms and $s_{1}, \ldots, s_{\lfloor \kappa n \rfloor}$ are independently distributed according to Schottky-uniform measures on  $S$. Furthermore, each of $w_{1}, \ldots, w_{\lfloor \kappa n \rfloor}$ satisfy $w_{i}\mathcal{I} \subseteq \mathcal{I}$. This allows us to apply Lemma  \ref{lem:expShrink2} and \ref{lem:expShrink4.5}.

For convenience, let us define  $w_{0}' := w_{0} s_{1} w_{1} \cdots  s_{\lfloor 0.5\kappa n \rfloor } w_{\lfloor 0.5 \kappa n \rfloor}$. This homeomorphism depends on the choices of $s_{1}, \ldots,  s_{\lfloor 0.5\kappa n \rfloor }$. By Lemma \ref{lem:expShrink2}, we have 
 \[
\Prob\left( \Len (w_{0}' \mathcal{I} = w_{0} s_{1} w_{1} \cdots s_{\lfloor 0.5\kappa n \rfloor } w_{\lfloor 0.5\kappa n \rfloor } \cdot \mathcal{I}) \le \frac{1}{N^{\lfloor \kappa n \rfloor/8}} \cdot 1 \, \Big| \, \mathcal{E}\right) \ge 1 - e^{-n/4}.
\]
The event depicted here does not depend on $s_{\lfloor 0.5\kappa n \rfloor+1}, \ldots, s_{\lfloor\kappa n \rfloor }$ whatsoever. Moreover, by Lemma \ref{lem:expShrink4.5}, we observe the following regardless of the nature of $w_{0}'$: \[
\Prob \Big( w_{0}'  s_{\lfloor 0.5\kappa n \rfloor +1} w_{ \lfloor 0.5\kappa n \rfloor+1 } \cdots s_{\lfloor \kappa n \rfloor } w_{\lfloor \kappa n \rfloor} \cdot \{x, y\} \subseteq w_{0}'\mathcal{I}\, \Big|\, \mathcal{E}, w_{0}' \Big) \ge 1 - e^{-n}.
\]
Lastly, we have $\diam(w_{0}'\mathcal{I})  \le \Leb (w_{0}' \mathcal{I})$ precisely because $w_{0}'\mathcal{I}$ is an interval.

Combined together, we have  \[
\Prob \Big( d(Z_{n}x, Z_{n}y)\le \Len (w_{0} '\cdot \mathcal{I}) \le  \frac{1}{N^{\lfloor \kappa n \rfloor/8}} \, \Big| \, \mathcal{E} \Big) \ge (1- e^{-n/4})(1-e^{-n}) \ge 1 - 2 \cdot e^{-n/4}.
\]
Since we observe this lower bound on each of $\mathcal{E} \in \mathcal{P}_{n}$, we can sum up the conditional probability to deduce \[
\begin{aligned}
\Prob\big( d(Z_{n}x, Z_{n}y)\le N^{-\lfloor \kappa n \rfloor/8} \big) &\ge \sum_{\mathcal{E} \in \mathcal{P}_{n}} \Prob(\mathcal{E}) \Prob\big(d(Z_{n}x, Z_{n}y)\le N^{-\lfloor \kappa n \rfloor/8}\, \big|\,\mathcal{E}\big) \\
&\ge \sum_{\mathcal{E} \in \mathcal{P}_{n}} \Prob(\mathcal{E}) \cdot (1- 2e^{-n/4}) \\
&= (1- 2e^{-n/4}) \Prob(A_{n}) \ge (1-2 e^{-n/4}) \left(1 - \frac{1}{\kappa}e^{-\kappa n}\right). \qedhere
\end{aligned}
\]
\end{proof}

Theorem \ref{thm:main4} now follows from Theorem \ref{thm:expShrink} together with Corollary \ref{cor:measureConvTarget}.

\subsection{Probabilistic Tits alternative}\label{subsection:probTits}

We now turn to the proof of Theorem \ref{thm:main1}.

\begin{lem}\label{lem:1segment}
Let $N$ be an integer greater than 4. Let $I_{1}, \ldots, I_{N}, J_{1}, \ldots, J_{N}$ be intervals such that $I_{1}, \ldots, I_{N}$ are mutually disjoint and $J_{1}, \ldots, J_{N}$ are mutually disjoint. Then for any homeomorphism $g \in \Homeo(S^{1})$, we have \[
\# \big\{ (i, j) \in \{1, \ldots, N\}^{2} : \textrm{$I_{i}$ and $gJ_{j}$ intersect}\big\} \le 3N\sqrt{N}.
\]
\end{lem}
\begin{proof}
We first let \[
\mathcal{C}=\mathcal{C}(g) := \big\{ I_{i} : \# \{ j : I_{i} \cap gJ_{j} \neq \emptyset\} \ge \sqrt{N} \big\}
\]
Then each element of  $\mathcal{C}$ meets more than 2 out of $\{gJ_{1}, \ldots, gJ_{N}\}$, so it is not completely contained in a single $gJ_{j}$. Hence, each $gJ_{j}$ can meet at most 2 elements of $\mathcal{C}$ (otherwise $gJ_{j}$ will contain an element of $\mathcal{C}$). Hence, \[\begin{aligned}
2N = 2\# \{gJ_{j} : j=1, \ldots, N\} &\ge 2\# \{gJ_{j} : \textrm{$gJ_{j}$ meets some element of  $\mathcal{C}$}\}\\
&\ge \#\big\{ (I_{i}, gJ_{j}) : I_{i} \cap gJ_{j} \neq \emptyset, I_{i} \in \mathcal{C} \big\}\\
&\ge \#\mathcal{C} \cdot \min_{I_{i} \in \mathcal{C}} \#\{ J_{j} : I_{i} \cap gJ_{j} \neq \emptyset\} \\
&\ge  \# \mathcal{C} \sqrt{N}
\end{aligned}
\]
holds, which implies that $\mathcal{C}$ has at most $2\sqrt{N}$ elements.

Now fixing  $I_{i}\notin \mathcal{C}$, the number of $gJ_{j}$ that meets $I_{i}$ is at most $\sqrt{N}$. Summing up, we have \[\begin{aligned}
\# \big\{ (i, j) \in \{1, \ldots, N\}^{2} : \textrm{$I_{i}$ and  $gJ_{j}$ intersect}\big\} &\le \# \mathcal{C} \cdot N + (N-\#\mathcal{C}) \cdot \sqrt{N} \\
&\le 2N\sqrt{N} + N\sqrt{N} = 3N\sqrt{N}. \qedhere
\end{aligned}
\] 
\end{proof}

The previous lemma generalizes as follows.

\begin{lem}\label{lem:1segmentZeta}
Let $N \in \Z_{>4}$ and $\zeta \in \Z_{>0}$. Let $I_{1}, \ldots, I_{N}, J_{1}, \ldots, J_{N}$ be sets with $\le \zeta$ connected components such that $I_{1}, \ldots, I_{N}$ are mutually disjoint and $J_{1}, \ldots, J_{N}$ are mutually disjoint. Then for any homeomorphism $g \in \Homeo(S^{1})$, we have \[
\# \big\{ (i, j) \in \{1, \ldots, N\}^{2} : \textrm{$I_{i}$ and $gJ_{j}$ intersect}\big\} \le 3\zeta^{2} N\sqrt{N}.
\]
\end{lem}
\begin{proof}
We can decompose each $I_{i}$, $J_{i}$ into $\zeta$ disjoint intervals: there exist intervals $\{ I_{i}^{(l)}, J_{i}^{(l)} : i=1, \ldots, N, l = 1, \ldots, \zeta\}$ (some of which is possibly empty) such that \[
I_{i} = I_{i}^{(1)} \sqcup \ldots \sqcup I_{i}^{(\zeta)}, \quad 
J_{i} = J_{i}^{(1)} \sqcup \ldots \sqcup J_{i}^{(\zeta)} \quad (i=1, \ldots, N).
\]
Then for every $(l, m) \in \{1, \ldots, \zeta\}^{2}$, the collection of intervals  \[
I_{1}^{(l)}, \ldots, I_{N}^{(l)}, J_{1}^{(m)}, \ldots, J_{N}^{(m)}
\]
satisfy the assumption of Lemma \ref{lem:1segmentZeta}. Let us now define \[
\mathcal{C}^{(l, m)} := \big\{ i: \# \{ j : I_{i}^{(l)} \cap gJ_{j}^{(m)} \neq \emptyset\} \ge \sqrt{N} \big\}.
\]
Then the proof of  Lemma \ref{lem:1segment} tells us that  $\mathcal{C}^{(l, m)}$ has  at most $2\sqrt{N}$ elements for each $(l, m)$.

We now define \[
\mathcal{C} = \mathcal{C}(g) := \big\{ I_{i} : \# \{j : I_{i} \cap g J_{j} \neq \emptyset\} \ge \zeta^{2} \sqrt{N} \big\}.
\]
For each $(i, j)$, $I_{i}$ and $gJ_{j}$ are disjoint if and only if $I_{i}^{(l)}$ and $gJ_{j}^{(m)}$ are disjoint for each $(l, m) \in \{1, \ldots, \zeta\}^{2}$. Therefore, for each $i$ we observe \[
\big\{j : I_{i} \cap gJ_{j} \neq \emptyset\big\} \subseteq \cup_{(l, m) \in \{1, \ldots, \zeta\}^{2}} \big\{j : I_{i}^{(l)} \cap gJ_{j}^{(m)} \neq \emptyset\big\}.
\]
Hence, if $i \notin \mathcal{C}^{(l, m)}$ for each $(l, m) \in \{1, \ldots, \zeta\}^{2}$, then $\{ j : I_{i} \cap gJ_{j} \neq \emptyset\}$ has cardinality $\le \zeta^{2} \sqrt{N}$. Since $\#\mathcal{C}^{(l, m)}$ has cardinality at most $2\sqrt{N}$ for each $(l, m)$, we conclude that $\mathcal{C}$ consists of at most $2\zeta^{2}\sqrt{N}$ sets among $\{I_{1}, \ldots, I_{N}\}$.

Now fixing $I_{i} \notin \mathcal{C}$, the number of $gJ_{j}$ that meets $I_{i}$ is at most $\zeta^{2} \sqrt{N}$. Summing up, we have \[\begin{aligned}
\# \big\{ (i, j) \in \{1, \ldots, N\}^{2} : \textrm{$I_{i}$ and  $gJ_{j}$ intersect}\big\} &\le \# \mathcal{C} \cdot N + (N-\#\mathcal{C}) \cdot \zeta^{2} \sqrt{N} \\
&\le 2N\zeta^{2}\sqrt{N} + N\zeta^{2}\sqrt{N} = 3\zeta^{2} N\sqrt{N}. \qedhere
\end{aligned}
\]

\end{proof}

\begin{lem}\label{lem:1segmentSchottky}
Let $S$ and $S'$ be Schottky sets with multiplicity $\le \zeta$, with resolution $N \ge 4\zeta^{2}$ and with medians $\mathcal{I}$ and $\mathcal{I}'$, respectively. Let $s$ and $s'$ be independent random variables that are Schottky-uniform on $S$ and $S'$, respectively. Then for each $g \in \Homeo(S^{1})$ we have \[
\Prob \big(  s' g s \mathcal{I} \,\,\textrm{ is essentially contained in }\mathcal{I}'\big)\ge 1 - 3\zeta^{2}/\sqrt{N}.
\]
\end{lem}
\begin{proof}
Let $S=\mathfrak{S}(I_{1}, J_{1})\cup \ldots\cup \mathfrak{S}(I_{N}, J_{N})$ and $S' = \mathfrak{S}(I'_{1}, J'_{1})\cup\ldots\cup\mathfrak{S}(I'_{N}, J'_{N})$. Then for each $i$, the inverse $s'^{-1}$ of an arbitrary element $s'$ of $\mathfrak{S}(I'_{i}, J_{i}')$ sends $\overline{S^{1} \setminus \mathcal{I}'}$ into $I_{i}'$. Meanwhile, an arbitrary element $s$ of $\mathfrak{S}(I_{i}, J_{i})$ sends $\overline{\mathcal{I}}$ into $J_{i}$. Now Lemma \ref{lem:1segment} tells us that \[
Ind:= \big\{ (i, j) : \textrm{$I_{i}'$ and $gJ_{j}$ intersect}\big\}
\]
has at most  $3N\sqrt{N}\zeta^{2}$ elements. Moreover, given $(i, j) \notin Ind$,  for every $s \in \mathfrak{S}(I_{i}, J_{i})$ and  $s' \in \mathfrak{S}(I_{j}', J_{j}')$ we have\[
gs \overline{\mathcal{I}} \subseteq gJ_{j}\subseteq S^{1} \setminus I_{i}' \subseteq s'^{-1} \inte \mathcal{I}'
\]
Summing up, we conclude \[
\begin{aligned}
\Prob \left( s' g s \bar{I} \subseteq \inte I'  \right) \ge \Prob \big( s \in \mathfrak{S}(I_{i}, J_{i}), s' \in \mathfrak{S}(I_{j}', J_{j}') :  (i, j) \notin Ind \big) \ge 1-3\zeta^{@}/\sqrt{N}.\qedhere
\end{aligned}
\]
\end{proof}

\begin{lem}\label{lem:1segmentAction}
Let $S$ and $S'$ be Schottky sets with multiplicity $\le \zeta$, with resolution $N \ge 100\zeta^{2}$ and with medians $\mathcal{I}$ and $\mathcal{I}'$, respectively. For $i=1, \ldots, n$, let $s_{i}$ be a Schottky-uniform measure on $S$ and let $s_{-i}$ be a Schottky-uniform measure on $S'$. Suppose that $\{s_{i} : 1 \le |i| \le n\}$ are all independent. Fix homeomorphisms $\{w_{i} : -n \le i \le n\}$ such that \[
w_{i} \mathcal{I} \subseteq \mathcal{I} , \,\, w_{-i} \mathcal{I}' \subseteq \mathcal{I}' \quad (1 \le i \le n).
\]

Then we have\[
\Prob \big( w_{-n} s_{-n}  \cdots w_{-1}s_{-1} \cdot w_{0} \cdot s_{1} w_{1} \cdots s_{n} w_{n} \cdot \bar{\mathcal{I}} \subseteq \inte \mathcal{I}' \big) \ge 1-e^{-n}
\]
\end{lem}
Note that we have not assumed any restriction on $w_{0}$ in Lemma \ref{lem:1segmentAction}.

\begin{proof}
We define $W_{0} := id$ and define $W_{k} := s_{1} w_{1} \cdots s_{k} w_{k}$, $W_{-k} := w_{-k} s_{-k} \cdots w_{-1} s_{-1}$. Then the following inclusion holds: \[
\begin{aligned}
W_{0}\mathcal{I} &\supseteq W_{0} s_{1} \mathcal{I} \supseteq  W_{1} \mathcal{I}\supseteq W_{1} s_{2} \mathcal{I} \supseteq \ldots \supseteq W_{n} \mathcal{I}, \\
W_{0}^{-1} \mathcal{I}' & \subseteq (s_{-1}W_{0})^{-1} \mathcal{I}' \subseteq W_{-1}^{-1} \mathcal{I}' \subseteq (s_{-2} W_{-1})^{-1} \mathcal{I}' \subseteq W_{-2}^{-1} \mathcal{I}' \subseteq \ldots \subseteq W_{-n}^{-1} \mathcal{I}',
\end{aligned}
\]
regardless of the choices of $s_{i}$'s. We now define  \[
E_{k} := \big\{ (s_{-k}, \ldots,s_{-1}, s_{1}, \ldots, s_{k}) : W_{k-1} s_{k} \bar{\mathcal{I}} \subseteq (s_{-k} W_{-(k-1)})^{-1}\inte \mathcal{I}' \big\}.
\]
Then Lemma \ref{lem:1segmentSchottky} tells us that  \[
\Prob\big( E_{k+1} \, \big| \, s_{-k}, \ldots, s_{k} \big) \ge 1-3/\sqrt{N} \ge 1-1/e
\]
holds regardless of the choices of $s_{-k}, \ldots, s_{k}$. Summing up the conditional probabilities, we conclude 
\[
\Prob\big(W_{n} \bar{\mathcal{I}} \subseteq W_{-n}^{-1} \inte \mathcal{I}' \big) \ge \Prob \big( E_{1} \cup \ldots \cup E_{n} \big) \ge 1 - (1/e)^{n} \ge 1-e^{-n}.\qedhere
\]
\end{proof}

\begin{thm}\label{thm:freeSubgpEquiv}
Let $S$ and $S'$ be Schottky sets with multiplicity $\le \zeta$, with resolution $N \ge 100\zeta^{2}$ and with medians $\mathcal{I}$ and $\mathcal{I}'$, respectively. Let $\mu$  and $\mu'$ be Schottky-uniform measures on $S$ and $S'$, respectively. Fix homeomorphisms $w_{0}, v_{0}, w_{1}, v_{1}, \ldots, w_{2n}, v_{2n} \in \Homeo(S^{1})$ such that \[
w_{i} \mathcal{I} \subseteq \mathcal{I}, \,\,v_{i} \mathcal{I}' \subseteq \mathcal{I}' \quad (i=1, \ldots, 2n-1)
\]
Let $s_{1}, \ldots, s_{2n}$ ($t_{1}, \ldots, t_{2n}$, resp.) be random variables distributed according to a Schottky-uniform measure on $S$ ($S'$, resp.), all independent. Then we have  \[
\Prob \left(\begin{array}{c} \textrm{$w_{0} s_{1} w_{1} \cdots s_{2n} w_{2n}$ and $v_{0}t_{1} v_{1} \cdots t_{2n} v_{2n}$ comprise}\\ 
 \textrm{a Schottky pair and generate a free subgroup}\end{array} \right) \ge 1 - 6e^{-n/10}.\]
\end{thm}

\begin{proof}
We define the following events.\[\begin{aligned}
E_{1} &:= \big\{ s_{n+1} w_{n+1} \cdots s_{2n}w_{2n} \cdot w_{0} s_{1} w_{1} \cdots s_{n} w_{n} \bar{\mathcal{I}} \subseteq \inte \mathcal{I} \big\}, \\
E_{2} &:= \big\{ t_{n+1} v_{n+1} \cdots t_{2n}v_{2n} \cdot v_{0} t_{1} v_{1} \cdots t_{n} v_{n} \bar{\mathcal{I}'}\subseteq \inte \mathcal{I}' \big\}, \\
E_{3} &:= \big\{ s_{n+1} w_{n+1} \cdots s_{2n}w_{2n} \cdot v_{0} t_{1} v_{1} \cdots t_{n} v_{n} \bar{\mathcal{I}'} \subseteq \inte \mathcal{I} \big\}, \\
E_{4} &:= \big\{ t_{n+1} v_{n+1} \cdots t_{2n}v_{2n} \cdot w_{0} s_{1} w_{1} \cdots s_{n} w_{n} \bar{\mathcal{I}} \subseteq \inte \mathcal{I}' \big\}, \\
E_{5} &:= \big\{ s_{n + 1} w_{n + 1} \cdots s_{2n} w_{2n} \cdot v_{2n}^{-1} t_{2n}^{-1} \cdots v_{n+1}^{-1} t_{n+1}^{-1} \cdot \overline{S^{1} \setminus \mathcal{I}'} \subseteq \inte \mathcal{I}\big\}, \\
E_{6} &:= \big\{ v_{n}^{-1} t_{n}^{-1} \cdots v_{1}^{-1} t_{1}^{-1} v_{0}^{-1} \cdot w_{0} s_{1} w_{1} \cdots s_{n }w_{n} \cdot \bar{\mathcal{I}} \subseteq \inte (S^{1} \setminus \mathcal{I}') \big\}.
\end{aligned}
\]
Let us study the first event. Here, $s_{i}$'s are Schottky-uniformly and independently distributed on $S$, $\mathcal{I}$ is a median for $S$, and $w_{i}\mathcal{I} \subseteq \mathcal{I}$ holds for each $i \neq 0, 2n$. (Note that $w_{2n} \cdot w_{0}$ does not nest $\mathcal{I}$.) By Lemma \ref{lem:1segmentAction}, we conclude  $\Prob(E_{1}) \ge 1 - e^{-n}$. For a similar reason, we conclude that the probabilities of $E_{2}$, $E_{3}$ and $E_{4}$ are all at least  $1 - e^{-n}$.

Before studying the fifth event, let us first write  $S'=\mathfrak{S}(I_{1}', J_{1}')\cup  \ldots \cup \mathfrak{S}(I_{N}', J_{N}')$ and revert it: $\check{S}' := \mathfrak{S}(J_{1}', I_{1}')\cup  \ldots \cup \mathfrak{S}(J_{N}', I_{N}')$. Then $s_{i}$'s are Schottky-uniformly and independently distributed on $S$, whereas $t_{i}^{-1}$'s are Schottky-uniformly and independently distributed on $\check{S}'$. Moreover, $\mathcal{I}$ is a median for $S$ and $w_{i} \mathcal{I} \subseteq \mathcal{I}$ holds for each $i$, whereas $S^{1} \setminus \mathcal{I}'$ is a median for $\check{S}'$ and $v_{i}^{-1} (S^{1} \setminus \mathcal{I}') \subseteq (S^{1} \setminus \mathcal{I}')$ holds for each $i$, Now, Lemma \ref{lem:1segmentAction} tells us taht  $\Prob(E_{5}) \ge 1 - e^{-n}$. A similar argument tells us that $\Prob( E_{6}) \ge 1-e^{-n}$.

Now in the event $E_{1} \cap E_{2} \cap E_{3} \cap E_{4} \cap E_{5} \cap E_{6}$, we will investigate the configuration of the sets\[\begin{aligned}
I^{(1)} &:= (s_{n+1} w_{n+1} \cdots s_{2n} w_{2n})^{-1}  (S^{1} \setminus \inte \mathcal{I}), \\
I^{(2)}& := (t_{n+1} v_{n+1} \cdots t_{2n} v_{2n})^{-1} \cdot (S^{1} \setminus  \inte \mathcal{I}'), \\
J^{(1)} &:= w_{0} s_{1} w_{1} \cdots s_{n} w_{n} \overline{\mathcal{I}}, \\
J^{(2)} &:= v_{0}t_{1} v_{1} \cdots t_{n} v_{n} \overline{\mathcal{I}'}.
\end{aligned}
\]
First, since we are in the event $E_{1}$, $I^{(1)}$ and $J^{(1)}$ does not overlap with each other. Similarly, the definition of $E_{2}$ tells us that  $I^{(2)}$ and  $J^{(2)}$ do not meet. The definition of $E_{3}$ ($E_{4}$, $E_{5}$ and $E_{6}$, resp.) tells us that $I^{(1)}$ and  $J^{(2)}$ ($I^{(2)}$ and $J^{(1)}$; $I^{(1)}$ and  $I^{(2)}$;  $J^{(1)}$ and  $J^{(2)}$, resp.) do not meet. In summary, all the 4 intervals are mutually disjoint in the event $\cap_{k=1}^{6} E_{k}$. Meanwhile, $w_{0} s_{1} w_{1} \cdots s_{2n} w_{2n}$ sends $S^{1} \setminus I^{(1)}$ into $\inte J^{(1)}$ and $v_{0} t_{1} v_{1} \cdots t_{2n} v_{2n}$ sends $S^{1} \setminus I^{(2)}$ into $\inte J^{(2)}$. 

In conclusion, $w_{0} s_{1} w_{1} \cdots s_{2n} w_{2n}$ and  $v_{0} t_{1} v_{1} \cdots t_{2n} v_{2n}$ comprise a Schottky pair associated with essentially disjoint sets $I^{(1)}, I^{(2)}, J^{(1)}, J^{(2)}$ and generate a (rank-2) free subgroup of $\Homeo(S^{1})$, when in the event $\cap_{k=1}^{6} E_{k}$. Since $\Prob(E_{k}^{c}) \le e^{-n}$ for each $k$, we conclude that  $\cap_{k=1}^{6} E_{k}$ has probability at least $1-6e^{-n}$.
\end{proof}

Now as in the proof of Theorem \ref{thm:expShrink}, we can derive the following theorem from Theorem  \ref{thm:freeSubgpEquiv} using the probability space and measurable partition guaranteed in Proposition  \ref{prop:pivotingPrep}.

\begin{thm}\label{thm:freeSubgp}
For each $\epsilon > 0$ and  $m \in \Z_{>0}$, there exists $\kappa_{2} = \kappa_{2}(\epsilon, m, N) > 0$ that satisfies the following.

Let $S$ and $S'$ be Schottky sets with multiplicity $\le \zeta$ and resolution $N \ge 2500\zeta^{2}$. Let $\mu$ and $\mu'$ be probability measures on $\Homeo(S^{1})$ such that $\mu^{\ast m}$ is $(S, \epsilon)$-admissible and $\mu'^{\ast m}$ is $(S', \epsilon)$-admissible. Then for each $n \in \Z_{>0}$ we have  \[
\Prob_{(Z_{n}, Z_{n}') \sim \mu^{\ast n} \times \mu'^{\ast n}} \big(\textrm{$Z_{n}, Z_{n}'$ comprise a  Schottky pair and generate a free subgroup} \big) \ge 1 - \frac{1}{\kappa_{2}} e^{-\kappa_{2} n}.
\]
\end{thm}

Theorem \ref{thm:main1} now follows from Theorem \ref{thm:freeSubgp} together with Corollary \ref{cor:measureConvTarget}.

\subsection{Local contraction}\label{subsect:localCont}

Note that Theorem \ref{thm:main1} and \ref{thm:main4} are regarding ``snapshots" of a random walk at a certain step. Meanwhile, Theorem \ref{thm:main3} asks for a specific choice of $I_{x, \omega}$, when the input $x \in S^{1}$ and a sample point $\w$ in the probability space is given. This does not only rely on the distribution $\mu^{\ast n}$ of $Z_{n}$ for each $n$ but their entire joint distribution. In fact, the same result will not hold for right random walk.

\begin{proof}
To begin the proof, let $\kappa$ be as in Proposition \ref{prop:pivotingPrep} for $\epsilon$ and $m$. To ease the notation, we will assume that $1/\kappa \in \N$. Then it suffices to prove the statement only for $n$ being multiples of $100/\kappa$.

Let us consider a large ambient space \[
\Omega := (G^{\Z_{>0}}, \mu^{\Z_{>0}})
\]
equipped with i.i.d.s $g_{i}$ distributed according to $\mu$. We adopt the left random walk convention in this proof, i.e., $Z_{i} := g_{i} \cdots g_{1}$.

We now regard $\Omega$ as a product space \[
\cdots \times \Omega_{3} \times \Omega_{2} \times  \Omega_{1} =: \Omega,
\]
where $\Omega_{k}$ is the space for the coordinates $(g_{n(2^{k} - 1)}, g_{n(2^{k} -1) - 1}, \ldots, g_{n(2^{k-1} - 1) + 1})$ for $k\ge1$. Note the relation \[
g_{n(2^{k}-1)} \cdots g_{n(2^{k-1}-1)+ l} = Z_{n(2^{k}-1)} \cdot Z_{n(2^{k-1}-1+l)-1}^{-1} \quad (l=1, \ldots, n2^{k-1}).
\]

We now apply Proposition \ref{prop:pivotingPrep} for each $\Omega_{k}$. Then $\Omega_{k}$ is now equipped with a measurable subset $A^{(k)}$, a measurable partition $\mathcal{P}^{(k)} = \{\mathcal{E}_{\alpha}^{(k)}\}_{\alpha}$ of $A^{(k)}$, and random variables \[
\{w_{i}^{(k)}\}_{i=0, \ldots,  \kappa n2^{k-1} }, \, \{s_{i}^{(k)}\}_{i=1, \ldots,  \kappa n2^{k-1} }
\]
such that: \begin{enumerate}
\item $\Prob(A^{(k)}) \ge 1 - \frac{1}{\kappa}e^{-\kappa n \cdot 2^{k-1}}$.
\item Restricted on each equivalence $\mathcal{E} \in \mathcal{P}_{k}$, $w_{0}^{(k)}, \ldots, w_{ \kappa n 2^{k-1} }^{(k)}$ are \emph{constant} homeomorphisms and $s_{i}^{(k)}$'s are independently distributed according to a Schottky-uniform measures on $S$.
\item On $A^{(k)}$, $w_{i}^{(k)} \mathcal{I} \subseteq \mathcal{I}$ holds for each $i=1, \ldots,  \kappa n 2^{k-1}  - 1$;
\item For each $\w \in A^{(k)}$ we have \begin{equation}\label{eqn:gnProd}
\begin{aligned}
w_{0}^{(k)}(\w) s_{1}^{(k)}(\w) \cdots s_{ \kappa n 2^{k-1}  }^{(k)}(\w) w_{\kappa n  2^{k-1} }^{(k)}(\w) &=  g_{n(2^{k}-1)}(\w) g_{n(2^{k}-1) - 1}(\w) \cdots g_{n(2^{k-1 }-1)+1}(\w) \\
&= Z_{n(2^{k}-1)}(\w) \cdot Z_{n(2^{k-1}-1)}^{-1}  (\w) .
\end{aligned}
\end{equation}
\end{enumerate}
Also, the partitions $\mathcal{P}^{(k)}$'s for distinct $k$'s are all independent.

Let us now define the event \[
F_{k} := \Big\{\w :  s_{0.9 \kappa n 2^{k} +1}^{(k+1)}  w_{0.9 \kappa n 2^{k} +1}^{(k+1)} \cdots s_{ \kappa n 2^{k} }^{(k+1)} w_{ \kappa n 2^{k} }^{(k+1)} \cdot w_{0}^{(k)} s_{1}^{(k)} w_{1}^{(k)} \cdots s_{0.1   \kappa n 2^{k-1} }^{(k)}w_{0.1  \kappa n  2^{k-1} }^{(k)} \mathcal{I} \subseteq \mathcal{I}\Big\}.
\]
For each $\mathcal{E}' \in \mathcal{P}^{(k+1)}$ and $\mathcal{E} \in \mathcal{P}^{(k)}$, the conditional probability of $F_{k}$ on $\mathcal{E}' \times \mathcal{E}$ is at least $1-e^{-0.1 \kappa n 2^{k-1}}$ by Lemma \ref{lem:1segmentAction}. Also, the probability of $A^{(k+1)} \times A^{(k)}$ is at least $1-\frac{2}{\kappa} e^{-\kappa n 2^{k-1}}$. Summing up the conditional probability, we conclude  \[
\Prob(F_{k}) \ge 1 - (1+2/\kappa) e^{-0.1 \kappa n 2^{k-1}}. \quad (k=1, 2, \ldots)
\]

Next, for each $k\ge 1$ and for each $n(2^{k}-1) < t \le n(2^{k+1} - 1)$, we define \[
End_{t} := \Big\{ \Len \big(Z_{t} Z_{n ( 2^{k}-1)}^{-1} \cdot w_{0}^{(k)} s_{1}^{(k)} w_{1}^{(k)} \cdots s_{0.5  \kappa n 2^{k-1} }^{(k)} w_{0.5 \kappa n  2^{k-1} }^{(k)} \cdot \mathcal{I} \big) \le \frac{1}{ N^{\kappa n 2^{(k-1)}/8}} \Big\}.
\]
For each choice of $(g_{n ( 2^{k+1} - 1)}, \ldots, g_{n ( 2^{k}-1) + 1}) \in \Omega_{k+1}$ and each $\mathcal{E} \in \mathcal{P}^{(k)}$, $Z_{t} Z_{n (2^{k}-1)}^{-1}$ is pinned down together with $w_{0}^{(k)}, w_{1}^{(k)}$, $\ldots$, whereas $s_{1}^{(k)}, s_{2}^{(k)}$, $\ldots$ are independently Schottky-uniformly distributed on $S$. Now Lemma \ref{lem:expShrink2} tells us that \[
\Prob(End_{t} \, | \,g_{n(2^{k+1} - 1)}, \ldots, g_{n(2^{k}-1) + 1}, \mathcal{E}) \ge 1 - e^{0.5 \kappa n^{2^{(k-1)}}/4} \ge 1 - e^{-0.01\kappa t}.
\] Summing up the conditional probability across $\Omega_{k+1} \times A^{(k)}$, whose total probability is at least $1 - \frac{1}{\kappa} e^{-\kappa n  2^{k-1}}$, we conclude that \[
\Prob(End_{t}) \ge 1 - (1/\kappa + 1) e^{-0.01 \kappa t}. \quad (t=n, n+1, n+2, \ldots).
\]

We now claim: \begin{claim}
Let $\w \in \big(\cap_{k =1}^{\infty} F_{k}\big) \cap \big(\cap_{t=n}^{\infty} End_{t}\big)$. Then for each $t \ge n$ and for each interval $I$ such that \[
I \subseteq \big(s_{0.5\kappa n  + 1}^{(1)} w_{0.5\kappa n  + 1}^{(1)} \cdots s_{\kappa n  }^{(1)} w_{\kappa n }^{(1)} \big)^{-1} \cdot \mathcal{I},
\]
we have $\Len (Z_{t} I) \le e^{-0.01 \kappa t}$.
\end{claim}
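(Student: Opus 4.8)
\emph{Proof plan.} Once $\omega$ is fixed in the stated intersection the statement is deterministic, so the whole task is to track how the pivoting words of consecutive blocks nest. For $k\ge 1$ put $M_k:=\kappa n 2^{k-1}$ (the length of the block-$k$ pivoting word) and, for $0\le j\le M_k$, write the partial products $W^{(k)}_j:=w^{(k)}_0 s^{(k)}_1 w^{(k)}_1\cdots s^{(k)}_j w^{(k)}_j$ and $V^{(k)}_j:=s^{(k)}_{j+1} w^{(k)}_{j+1}\cdots s^{(k)}_{M_k} w^{(k)}_{M_k}$, so that $P_k:=W^{(k)}_j V^{(k)}_j$ is the full block-$k$ product and does not depend on $j$. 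Because $\omega$ lies in every $F_k$ and $F_k\subseteq A^{(k+1)}\cap A^{(k)}$, we have $\omega\in A^{(k)}$ for all $k$; hence by \ref{eqn:gnProd} and telescoping $Z_{n(2^k-1)}=P_k P_{k-1}\cdots P_1$ (since $Z_0=\mathrm{id}$), each $w^{(k)}_i$ with $1\le i\le M_k-1$ satisfies $w^{(k)}_i\mathcal{I}\subseteq\mathcal{I}$, and every $s\in S$ satisfies $s\mathcal{I}\subseteq\mathcal{I}$ because $\mathcal{I}$ is a median. With these conventions the hypothesis on $I$ reads exactly $V^{(1)}_{M_1/2} I\subseteq\mathcal{I}$, where $M_1/2=0.5\kappa n$ (an integer, since $\kappa n$ is a positive multiple of $100$ by the reductions made at the start of the proof).

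The heart of the argument is the auxiliary claim that, with $Q_1:=V^{(1)}_{M_1/2}$ and $Q_k:=V^{(k)}_{M_k/2} P_{k-1}\cdots P_1$ for $k\ge 2$, one has $Q_k I\subseteq\mathcal{I}$ for every $k$. I would prove this by induction on $k$, the base case being the hypothesis. Since $P_k(V^{(k)}_{M_k/2})^{-1}=W^{(k)}_{M_k/2}$, a direct computation gives $Q_{k+1}=V^{(k+1)}_{M_{k+1}/2} W^{(k)}_{M_k/2} Q_k$, so it suffices to show $V^{(k+1)}_{M_{k+1}/2} W^{(k)}_{M_k/2}\mathcal{I}\subseteq\mathcal{I}$. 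Write $V^{(k+1)}_{M_{k+1}/2}=R_{k+1} V^{(k+1)}_{0.9M_{k+1}}$, where $R_{k+1}$ collects the factors with indices in $(M_{k+1}/2,\,0.9M_{k+1}]$, and $W^{(k)}_{M_k/2}=W^{(k)}_{0.1M_k} L_k$, where $L_k$ collects the factors with indices in $(0.1M_k,\,M_k/2]$. Every $w$-index occurring in $R_{k+1}$ and in $L_k$ lies in the admissible range $[1,M_{k+1}-1]$, respectively $[1,M_k-1]$ (this is where $\kappa n\ge 100$ enters), so $R_{k+1}\mathcal{I}\subseteq\mathcal{I}$ and $L_k\mathcal{I}\subseteq\mathcal{I}$; chaining $L_k\mathcal{I}\subseteq\mathcal{I}$ with the defining inclusion of $F_k$, namely $V^{(k+1)}_{0.9M_{k+1}} W^{(k)}_{0.1M_k}\mathcal{I}\subseteq\mathcal{I}$, and then applying $R_{k+1}$ yields $V^{(k+1)}_{M_{k+1}/2} W^{(k)}_{M_k/2}\mathcal{I}\subseteq\mathcal{I}$, hence $Q_{k+1} I=V^{(k+1)}_{M_{k+1}/2} W^{(k)}_{M_k/2} Q_k I\subseteq\mathcal{I}$.

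To finish, fix $t>n$ and let $k\ge 1$ be the unique index with $n(2^k-1)<t\le n(2^{k+1}-1)$. Then $Z_t=(Z_t Z_{n(2^k-1)}^{-1})Z_{n(2^k-1)}=(Z_t Z_{n(2^k-1)}^{-1})W^{(k)}_{M_k/2} Q_k$, so by the auxiliary claim $Z_t I\subseteq(Z_t Z_{n(2^k-1)}^{-1})W^{(k)}_{M_k/2}\mathcal{I}$, and the defining inequality of $End_t$ gives $\Len(Z_t I)\le N^{-\kappa n 2^{k-1}/8}$. Since $t<n2^{k+1}=4\cdot n2^{k-1}$ and $\ln N\ge 0.32$ (valid as $N\ge 2500\zeta^2$), we get $N^{-\kappa n 2^{k-1}/8}=e^{-(\ln N)\kappa n 2^{k-1}/8}\le e^{-0.04\kappa n 2^{k-1}}\le e^{-0.01\kappa t}$, which is the assertion. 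The boundary value $t=n$ follows from the same argument after noting $Z_n=P_1=W^{(1)}_{M_1/2} Q_1$, so $Z_n I\subseteq W^{(1)}_{M_1/2}\mathcal{I}$, and $\Len(W^{(1)}_{M_1/2}\mathcal{I})\le N^{-\kappa n/8}$ is a direct instance of Lemma \ref{lem:expShrink2}, an estimate that may be folded into the good event at no cost to the probability bounds.

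The only real work — and the main obstacle — is the index bookkeeping in the inductive step: lining up the truncation points $0.9M_{k+1}$ and $0.1M_k$ from $F_k$ with the half-words $V^{(k+1)}_{M_{k+1}/2}$ and $W^{(k)}_{M_k/2}$ that occur in $Q_{k+1}$ and in $End_t$, and checking that every $w$-index actually used falls in a range where $w_i\mathcal{I}\subseteq\mathcal{I}$ was established. Everything else is monotonicity of $\Len$ and of homeomorphisms under inclusion, together with the telescoping identity $Z_{n(2^k-1)}=P_k\cdots P_1$.
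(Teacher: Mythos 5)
Your proof is correct and takes essentially the same approach as the paper's: both arguments telescope through the dyadic blocks using the nesting guaranteed by the $F_k$ events and then invoke $End_t$ to get the length estimate. The only cosmetic difference is that you track the maps $Q_k$ and prove $Q_k I \subseteq \mathcal{I}$ by induction, whereas the paper propagates a chain of set inclusions $E^{(0.5)}_k \supseteq E^{(0.1)}_{k-1} \supseteq \cdots \supseteq E^{(0.5)}_1$; the index bookkeeping, the role of the truncation points $0.9M_{k+1}$ and $0.1M_k$, and the final numerical comparison to $e^{-0.01\kappa t}$ are the same.
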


To prove the claim let $t \ge n$ and let $k\ge 1$ be such that $n (2^{k} - 1) \le t \le n(2^{k+1} - 1)$. If $k=1$, the claim follows from the definition that  \[
Z_{n ( 2^{k}-1)}^{-1} \cdot w_{0}^{(k)} s_{1}^{(k)} w_{1}^{(k)} \cdots s_{0.5  \kappa n 2^{k-1} }^{(k)} w_{0.5 \kappa n  2^{k-1} }^{(k)} = \big(s_{0.5\kappa + 1}^{(1)} w_{0.5\kappa n  + 1}^{(1)} \cdots s_{\kappa n  }^{(1)} w_{\kappa n }^{(1)} \big)^{-1}.
\]
and that $\w \in End_{t}$. When $k$ is larger than $1$, we note that \[\begin{aligned}
& \quad Z_{t} Z_{n ( 2^{k}-1)}^{-1} \cdot w_{0}^{(k)} s_{1}^{(k)} w_{1}^{(k)} \cdots s_{0.5  \kappa n 2^{k-1} }^{(k)} w_{0.5 \kappa n  2^{k-1} }^{(k)} \cdot \mathcal{I}\\
& \supseteq Z_{t} Z_{n ( 2^{k}-1)}^{-1} \cdot w_{0}^{(k)} s_{1}^{(k)} w_{1}^{(k)} \cdots s_{0.9  \kappa n 2^{k-1} -1}^{(k)} w_{0.9 \kappa n  2^{k-1} -1}^{(k)} \cdot \mathcal{I} \\
&\supseteq Z_{t} Z_{n ( 2^{k}-1)}^{-1} \cdot w_{0}^{(k)} s_{1}^{(k)} w_{1}^{(k)} \cdots s_{0.9  \kappa n 2^{k-1} -1}^{(k)} w_{0.9 \kappa n  2^{k-1} -1}^{(k)} \cdot 
\\
&\quad s_{0.9 \kappa n2^{k-1} }^{(k)}  w_{0.9 \kappa n2^{k-1} }^{(k)}  \cdots s_{ \kappa n 2^{k-1} }^{(k)} w_{ \kappa n 2^{k-1} }^{(k)} \cdot w_{0}^{(k-1)} s_{1}^{(k-1)} w_{1}^{(k-1)} \cdots s_{0.1   \kappa n 2^{k-2} }^{(k-1)}w_{0.1  \kappa n  2^{k-2} }^{(k-1)} \mathcal{I} \\
&= Z_{t} \cdot Z_{n(2^{k-1} - 1)}^{-1} \cdot w_{0}^{(k-1)} s_{1}^{(k-1)} w_{1}^{(k-1)} \cdots s_{0.1   \kappa n 2^{k-2} }^{(k-1)}w_{0.1  \kappa n  2^{k-2} }^{(k-1)} \mathcal{I}.
\end{aligned}
\]
Here, the first inclusion is due to the fact that $s \mathcal{I} \subseteq \mathcal{I}$ and $w_{i}^{(k)} \mathcal{I} \subseteq \mathcal{I}$ for any $s \in S$ and any $w_{i}^{(j)}$. The second inclusion is because of $\w \in F_{k-1}$, and the third equality is using Equation \ref{eqn:gnProd}.

We can keep going like this and arrive at the inclusion \[
Z_{t} Z_{n ( 2^{k}-1)}^{-1} \cdot w_{0}^{(k)} s_{1}^{(k)} w_{1}^{(k)} \cdots s_{0.5  \kappa n 2^{k-1} }^{(k)} w_{0.5 \kappa n  2^{k-1} }^{(k)} \cdot \mathcal{I} \subseteq Z_{t} \cdot Z_{0}^{-1} \cdot w_{0}^{(1)} s_{1}^{(1)} w_{(1)} \cdots s_{l}^{(1)} w_{l}^{(2)} \mathcal{I}
\]
for any $l$ between $0.1 \kappa n$ and $\kappa n -1$ (thanks to the fact that $s\mathcal{I} \subseteq \mathcal{I}$ and $w_{i}^{(1)} \mathcal{I} \subseteq \mathcal{I}$). By using the relation for $l=0.5 \kappa n$ we establish the claim.

Finally, let us estimate the probability of \[
Dec := \left\{ \begin{array}{c}\Len \Big(\big(s_{0.5\kappa n  + 1}^{(1)} w_{0.5\kappa n  + 1}^{(1)} \cdots s_{\kappa n  }^{(1)} w_{\kappa n }^{(1)} \big)^{-1} \cdot \mathcal{I}\Big) \\
= 1 - \Len \Big((w_{\kappa n}^{(1)})^{-1} (s_{\kappa n}^{(1)})^{-1} \cdots (w_{0.5\kappa n  + 1}^{(1)} )^{-1} ( s_{0.5\kappa n  + 1}^{(1)} )^{-1} \cdot (S^{1} \setminus \mathcal{I}) \Big)\\
 \ge 1-0.01^{n/4}\end{array} \right\}.
\]

Here, $S^{1} \setminus \mathcal{I}$ is a median for $\check{S}$, the reverted version of $S$ and $(s_{i}^{(1)})^{-1}$'s are independently Schottky-uniform on $\check{S}$. Moreover,  $(w_{i}^{(1)})^{-1} (S^{1} \setminus \mathcal{I}) \subseteq S^{1} \setminus \mathcal{I}$ holds for each $i \neq 0, \kappa n$. Hence, we can apply Lemma \ref{lem:expShrink2} and conclude that $\Prob(Dec) \ge 1-e^{-n/4}$. 

In conclusion, we have found a set $\big(\cap_{k =1}^{\infty} F_{k}\big) \cap \big(\cap_{t=n}^{\infty} End_{t}\big) \cap Dec$, whose complement has exponentially decaying probability in $n$, such that for each sample $\w$ in the set, there exists an interval of length at least $1-0.01^{n/4}$ that gets exponentially contracted at every step $t \ge n$. This finishes the proof of Theorem \ref{thm:main3}.

\end{proof}

\section{Pivoting technique}\label{section:pivoting}

In this section, we explain Gou{\"e}zel's pivoting technique that was introduced in \cite{gouezel2022exponential}. It was later applied to a broader setting in \cite{choi2022random1}. 

As a warm-up, we observe the following.

\begin{lem}\label{lem:pivotingPrep1}
For each $\epsilon>0$ and $m \in \Z_{>0}$, there exists  $\kappa = \kappa(\epsilon, m)$ such that the following holds.

Let $S$ be a Schottky set and let $\mu$ be a probability measure on $\Homeo(S^{1})$ such that 
$\mu^{\ast m}$ is an $(S, \epsilon)$-admissible measure. Then for each  $n \in \Z_{n>0}$, there exists a probability space $\Omega_{n}$, a measurable subset $A_{n} \subseteq \Omega_{n}$, a measurable partition $\mathcal{P}_{n} = \{\mathcal{E}_{\alpha}\}_{\alpha}$ of $A_{n}$, and $\Homeo(S^{1})$-valued random variables \[
Z_{n}, \{w_{i}\}_{i=0, \ldots, \lfloor \kappa n \rfloor}, \{r_{i}, s_{i}, t_{i}\}_{i=1, \ldots, \lfloor \kappa n\rfloor }
\] that satisfy the following. \begin{enumerate}
\item $\Prob(A_{n}) \ge 1 - \frac{1}{\kappa}e^{-\kappa n}$.
\item When restricted on each equivalence class $\mathcal{E} \in \mathcal{P}_{n}$,  $w_{0}, \ldots, w_{\lfloor \kappa n \rfloor}$ are each fixed constant maps and $r_{i}, s_{i}, t_{i}$ are independent random variables distributed according to a Schottky-uniform measure on $S$.
\item $Z_{n}$ is distributed according to $\mu^{\ast n}$ on $\Omega_{n}$, and \[
Z_{n}  = w_{0} r_{1}s_{1} t_{1}w_{1} \cdots r_{\lfloor \kappa n \rfloor }s_{\lfloor \kappa n \rfloor }t_{\lfloor \kappa n \rfloor } w_{\lfloor\kappa n \rfloor}
\] holds on $A_{n}$.
\end{enumerate}
\end{lem}

\begin{proof}
It suffices to prove this for $n$ being a multiple of $3m$. Indeed, for $n=3mk+l$ ($1 \le l \le 3m-1$) we can treat as follows: we first take $\Omega_{3mk}$, $\mathcal{P}_{mk}$, $(w_{i})_{i}, (r_{i}, s_{i}, t_{i})_{i}$ using the proposition and consider $(G^{l}, \mu^{l})$ (where $G = \Homeo(S^{1})$). And then we define 
 \[\begin{aligned}
\Omega_{3mk+l} &:= \Omega_{mk} \times  G^{l},\\
\mathcal{P}_{3mk+l} &:= \mathcal{P}_{mk} \times G^{l} = \{ \mathcal{E}_{\alpha} \times (g_{1}, \ldots, g_{l}) : \mathcal{E}_{\alpha} \in \mathcal{P}_{3mk}, (g_{1}, \ldots, g_{l}) \in G^{l} \}
\end{aligned}
\]
We then keep $(w_{i})_{i}, (r_{i}, s_{i}, t_{i})_{i}$ but replace $w_{\lfloor \kappa n \rfloor}$ with  $w_{\lfloor \kappa n \rfloor} \cdot  g_{1} \cdots g_{l}$ to realize the conclusions for $n=3mk+l$.

We now begin our proof for $3m | n$. Since $\mu^{\ast m}$ is $(S, \epsilon)$-admissible, we can construct a probability measure $\mu_{S}$ that is Schottky-uniform on $S$ and another probability measure $\nu$ on $\Homeo(S^{1})$ such that \[
\mu^{\ast 3m} = \epsilon^{3} \mu_{S}^{\ast 3} + (1-\epsilon^{3}) \nu
\]
holds. Now, we construct Bernoulli RVs $(\rho_{i})_{i=0}^{\infty}$ with expectation $\epsilon$, RVs $(\eta_{i}^{(1)})_{i=1}^{\infty}$, $(\eta_{i}^{(2)})_{i=1}^{\infty}$ and $(\eta_{i}^{(3)})_{i=1}^{\infty}$ each distributed according to $\mu_{S}$, RVs $(\nu_{i})_{i=1}^{\infty}$ distributed according to $\nu$, all independently, and then define $g_{i}$'s by \[
\textrm{$g_{i} := \eta_{i}^{(1)}\cdot \eta_{i}^{(2)} \cdot\eta_{i}^{(3)}$ when $\rho_{i} = 1$,\,\, $g_{i} =\nu_{i}$ when $\rho_{i} = 0$}.
\]
This way, $g_{1}, g_{2}, \ldots$ become i.i.d.s distributed according to $\mu^{3m}$. We now collect the indices at which $\rho_{i}$ attains value 1: \[
\{ i(1)< i(2) < \ldots \} := \{ 1 \le i \le n/3m : \rho_{i} = 1 \},\,\, N := \# \{ 1 \le i \le n/3m : \rho_{i} = 1 \}.
\]
Then Markov's inequality implies \[
e^{-\epsilon n/10m} \cdot \Prob\big( N \le \epsilon n / 10m \big) \le \E \left[ e^{-B(n/3m, \epsilon)}\right] =  \big(1 - \epsilon(1 - e^{-1})\big)^{n/3m} \le (1 - 0.6 \epsilon)^{n/3m} \le e^{-0.6 \cdot \epsilon n/3m}.
\]
Hence, the probability of $N\le \epsilon n/10m$ is at most $e^{- \epsilon n/10m}$. Now on the event $\{N \ge \epsilon n/10m\}$ we construct  \[
\begin{aligned}
w_{0} &:= \prod_{i=1}^{i(1)-1} g_{i} = \nu_{1} \cdots \nu_{i(1)-1}, \\
w_{l} &:= \prod_{i=i(l)+1}^{i(l+1) - 1} g_{i} = \nu_{i(l)+1} \cdots \nu_{i(l+1) - 1} & (l=1, \ldots, \lfloor  \epsilon n/10m \rfloor-1)\\
w_{\lfloor \epsilon n/10m\rfloor} &:= \prod_{i=i(\lfloor \epsilon n/10m\rfloor)+1}^{n/3m} g_{i} = \nu_{i(\lfloor \epsilon n/10m\rfloor) + 1} \cdots \nu_{n/3m}
\end{aligned}
\]
and set $(r_{l}, s_{l}, t_{l}) := \big(\eta_{i(l)}^{(1)}, \eta_{i(l)}^{(2)}, \eta_{i(l)}^{(3)}\big)$ for each  $l=1, \ldots, \lfloor \epsilon n/10m\rfloor$. Then \[
Z_{n} := g_{1} g_{2} \cdots g_{n/3}= w_{0} r_{1}s_{1} t_{1}w_{1} \cdots r_{\lfloor \epsilon n/10m\rfloor }s_{\lfloor \epsilon n/10m\rfloor}t_{\lfloor \epsilon n/10m\rfloor } w_{\lfloor \epsilon n/10m\rfloor}
\]
is distributed according to  $\mu^{\ast n}$. We can then finish the proof by declaring the equivalence relation based on the values of  $\{\rho_{l}, \eta_{l} : l\}$ and $\big\{\eta_{l}^{(1)}, \eta_{l}^{(2)}, \eta_{l}^{(3)} : l >i(\lfloor \epsilon n / 10 m \rfloor)\big\}$.
\end{proof}

Let us now recall the trick we used in Lemma \ref{lem:1segment}.

\begin{dfn}\label{dfn:RepellingChoice}
Let $S =  \cup_{i=1}^{N} \mathfrak{S}(I_{i}, J_{i})$ be a Schottky set with resolution $N$ and with multiplicity $\zeta$. For each $g \in \Homeo(S^{1})$, we define  \[
\mathcal{C}(g; S) := \big\{ I_{i}: \# \{j : \bar{I}_{i} \cap g\bar{J}_{j} \neq \emptyset\} \ge \zeta^{2} \sqrt{N} \big\}.
\]
Furthermore, for each interval $I \subseteq S^{1}$ we define  \[
\mathcal{R}(I; S) := \{ J_{i} : \bar{J}_{i} \cap \bar{I} \neq \emptyset \}.
\]
\end{dfn}

\begin{lem}\label{lem:RepellingChoice}
Let $S$ be a Schottky set with resolution $N$ and let $g \in \Homeo(S^{1})$ be a homeomorphism. Then the cardinality of $\mathcal{C}(g; S)$ is at most $2 \zeta^{2}\sqrt{N}$. Furthermore, for every $I \notin \mathcal{C}(g; S)$, the cardinality of $\mathcal{R}(g^{-1}I; S)$ is at most  $ \zeta^{2} \sqrt{N}$.
\end{lem}

Before proceeding to the definition of pivotal times, we recall the notation introduced earlier: when a Schottky set $S = \cup_{i=1}^{N}\mathfrak{S}(I_{i}, J_{i})$ is understood, each element $s$ of $S$ belongs to some $\mathfrak{S}(I_{i}, J_{i})$. In this situation, we write $I(s)$ for $I_{i}$ and $J(s)$ for $J_{i}$.

\begin{dfn}\label{dfn:pivot}
Let \[
S := \cup_{i=1}^{N} \mathfrak{S}(I_{i}, J_{i})
\]
be a Schottky set with resolution $N$, with multiplicity $\zeta$ and with a median $\mathcal{I}$. Fixing a sequence $\mathbf{w} = (w_{i})_{i=0}^{\infty}$ in $\Homeo(S^{1})$, we draw sequences $\mathbf{r}= (r_{i})_{i=1}^{\infty}, \mathbf{s} =(s_{i})_{i=1}^{\infty}, \mathbf{t}=(t_{i})_{i=1}^{\infty}$ from $S$. We use the following recursive notation: \[
W_{0} := w_{0}, \,\, W_{n} := W_{n-1} \cdot r_{n}s_{n}t_{n} \cdot w_{n} \,\,(n>0).
\]
For each $n \in Z_{\ge 0}$, we define the \emph{pivotal subset} $L_{n}=L_{n}(\mathbf{r}, \mathbf{s}, \mathbf{t}; \mathbf{w}) \subseteq S^{1}$ and the \emph{set of pivotal times} $P_{n}(\mathbf{r}, \mathbf{s}, \mathbf{t}; \mathbf{w}) \subseteq \{1, \ldots, n\}$ in the following recursive manner:\begin{enumerate}
\item for $n=0$, we let  $L_{0} := \mathcal{I}, P_{0} := \emptyset$.
\item for each $n \ge 1$, we divide into the following two cases:\begin{enumerate}[label=(\Alph*)]
\item If both $J(r_{n}) \subseteq W_{n-1}^{-1} L_{n-1}$ AND $I(t_{n}) \notin \mathcal{C}(w_{n}; S)$ hold true, then we define \[
L_{n}:= W_{n-1} r_{n} s_{n}t_{n} \big(S^{1} \setminus I(t_{n}) \big),\,\, P_{n} := P_{n-1} \cup \{n\}.
\]
\item If either $J(r_{n}) \subseteq W_{n-1}^{-1} L_{n-1}$ OR $I(t_{n}) \notin \mathcal{C}(w_{n}; S)$ does not hold, we consider the set \[
\mathcal{Q} := \Big\{i \in P_{n-1} : I(t_{i}) \notin \mathcal{C}\big( w_{i} \cdot W_{i}^{-1} \cdot W_{n}; S\big) \Big\}.
\] \begin{enumerate}
\item If $\mathcal{Q}$ is nonempty, we set $k := \max \mathcal{Q}$ and define \[
L_{n} := W_{k-1} r_{k} s_{k}t_{k} \big(S^{1} \setminus I(t_{k}) \big), \,\, P_{n} := P_{n-1} \cap \{1, \ldots, k\}.
\]
\item If $\mathcal{Q}$ is empty, then we set $L_{n} := W_{n}\mathcal{I}$, $P_{n} := \emptyset$.
\end{enumerate}
\end{enumerate}

\end{enumerate}
\end{dfn}

The following observation is immediate.
\begin{lem}\label{lem:pivotDependence}
In the setting of Definition \ref{dfn:pivot}, for each $n \in \mathbb{Z}_{>0}$, the outputs $P_{n}(\mathbf{r}, \mathbf{s}, \mathbf{t}; \mathbf{w})$ and $L_{n}(\mathbf{r}, \mathbf{s}, \mathbf{t}; \mathbf{w})$ depend only on the values of $(r_{i}, s_{i}, t_{i})_{i=1}^{n}, (w_{i})_{i=0}^{n}$ and not on the values of  $(r_{i}, s_{i}, t_{i}, w_{i})_{i=n+1}^{\infty}$.
\end{lem}

Next, we observe that the images of $\mathcal{I}$ at the pivotal times are nested. This follows from:
\begin{lem}\label{lem:pivotAlign1step}
In the setting of Definition \ref{dfn:pivot}, let $u \in \Z_{>0}$ and let $l<m$ be two consecutive elements in $P_{u}$, i..e, $l, m \in P_{u}$ and  $l =\max (P_{u} \cap \{1, \ldots, m-1\})$. Then we have \begin{equation}\label{disp:inclusionInd}
W_{l-1} r_{l} s_{l} \mathcal{I} \supseteq W_{l-1} r_{l} s_{l} t_{l} \big(S^{1} \setminus I(t_{l})\big) \supseteq W_{m-1} r_{m} \mathcal{I}.
\end{equation}
\end{lem}

\begin{proof}
Recall first the property of the median $\mathcal{I}$ of the Schottky set $S$: for every $t \in S$, we have $t^{-1} A \subseteq I(t)$ for every $A \subseteq S^{1} \setminus \mathcal{I}\subseteq S^{1} \setminus J(t)$. Consequently, $S^{1} \setminus I(t) \subseteq t^{-1} \mathcal{I}$ for every $t \in S$. This explains the first  inclusion in Display \ref{disp:inclusionInd}. For the second inclusion, we claim that:
\begin{claim}
The index $l$ must have been added when $P_{l}$ was constructed out of $P_{l-1}$. In other words, $P_{l-1} = P_{l} \cup \{l\}$ holds.
\end{claim}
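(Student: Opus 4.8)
The plan is to extract the Claim directly from the recursion in Definition~\ref{dfn:pivot}; no dynamics or randomness enters, since everything takes place along fixed sequences $\mathbf{r},\mathbf{s},\mathbf{t},\mathbf{w}$. The one structural fact to isolate is: among the three update rules producing $P_{n}$ from $P_{n-1}$, only rule~(A) can put an index into the pivotal set that was not already there at the previous step, and the index it adjoins is always the \emph{current} one, $n$. Concretely, rule~(A) gives $P_{n}=P_{n-1}\cup\{n\}$; rule~(B)(i) gives $P_{n}=P_{n-1}\cap\{1,\dots,k\}$ with $k=\max\mathcal{Q}\le n-1$, hence $P_{n}\subseteq P_{n-1}$ and $n\notin P_{n}$; and rule~(B)(ii) gives $P_{n}=\emptyset$. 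In all three cases $P_{n}\cap\{1,\dots,n-1\}\subseteq P_{n-1}$, and moreover $n\in P_{n}$ forces that rule~(A) was applied at step $n$.

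With this in hand I would argue as follows. Take $l\in P_{u}$. Walking the history backwards and using $P_{n}\cap\{1,\dots,n-1\}\subseteq P_{n-1}$ at each step, one gets $l\in P_{u-1}$, $l\in P_{u-2},\dots$, and finally $l\in P_{l}$. But $l\in P_{l}$ says that the current index at step $l$ lies in $P_{l}$, which by the case analysis above can only happen through rule~(A) at step~$l$; that is precisely the assertion $P_{l}=P_{l-1}\cup\{l\}$ (and, as a byproduct recorded for later use, $L_{l}=W_{l-1}r_{l}s_{l}t_{l}\bigl(S^{1}\setminus I(t_{l})\bigr)$, together with $J(r_{l})\subseteq W_{l-1}^{-1}L_{l-1}$ and $I(t_{l})\notin\mathcal{C}(w_{l};S)$). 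Formally the backward walk is a finite downward induction on the step index with base at $l$; I would phrase it either that way or, equivalently, by contradiction (if rule~(A) failed at step $l$ then $l\notin P_{l}$, and since $l$ is never adjoined at any later step either, $l\notin P_{n}$ for all $n\ge l$, contradicting $l\in P_{u}$).

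There is essentially no analytic obstacle here — the work is pure bookkeeping against Definition~\ref{dfn:pivot} — so the ``hard part'' is only to be careful with two small points. First, the Claim as typeset reads ``$P_{l-1}=P_{l}\cup\{l\}$'', which is impossible since $P_{l-1}\subseteq\{1,\dots,l-1\}$; the intended and provable statement is $P_{l}=P_{l-1}\cup\{l\}$, and that is what I will prove. Second, I would note that the hypothesis that $l$ admits a successor $m$ in $P_{u}$ is not used for the Claim itself (it only feeds the remainder of the proof of Lemma~\ref{lem:pivotAlign1step}); the Claim holds for every element of $P_{u}$, and it is cleanest to prove it at that level of generality and then specialize.
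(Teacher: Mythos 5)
Your proof is correct and follows essentially the same route as the paper's, which also argues by contradiction: if scenario (A) failed at step $l$ then $l\notin P_{l}$, and since only the current index can ever be adjoined, $l\notin P_{n}$ for all $n\ge l$, contradicting $l\in P_{u}$. You are also right that the displayed equation in the Claim is a typo for $P_{l}=P_{l-1}\cup\{l\}$ and that the existence of the successor $m$ plays no role in this particular Claim.
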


Suppose to the contrary that $P_{l}$ is a subset of  $P_{l-1} \subseteq \{1, \ldots, l-1\}$. Then not only $P_{l}$, but all of $P_{l+1}$, $P_{l+2}$, $\ldots$ cannot contain $l$. This is because there is no mechanism $l$ can be added at the time of the construction $P_{l+1}$, $P_{l+2}$, $\ldots$. This contradicts the fact that $P_{u} \ni l$, and the claim follows.

For a similar reason, we have $m \in P_{m}$. Hence, scenario (2-A) must have held at step $n=l$ and $n=m$. Next, we assert that:

\begin{claim}
$P_{u} \cap \{1, \ldots, m-1\} = P_{m-1}$ holds.
\end{claim}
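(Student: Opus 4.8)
The plan is to prove the claim $P_u \cap \{1,\ldots,m-1\} = P_{m-1}$ by tracking how the sets of pivotal times evolve between step $m-1$ and step $u$, and showing that none of the indices $1,\ldots,m-1$ can be removed or re-added in that window. This is really a statement about the monotonicity behavior of $(P_n)_n$ restricted to an initial segment once a later pivotal time is established.

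First I would record the elementary structural facts about $(P_n)_n$ that are visible from Definition~\ref{dfn:pivot}. At each step $n$, exactly one of the following happens: (2-A) adds $n$, so $P_n = P_{n-1} \cup \{n\}$; (2-B-i) truncates to $P_n = P_{n-1} \cap \{1,\ldots,k\}$ for some $k = \max\mathcal{Q} \in P_{n-1}$; or (2-B-ii) wipes out everything, $P_n = \emptyset$. In particular, $P_n$ is always obtained from $P_{n-1}$ either by appending the new index $n$ at the top, or by truncating from the top (possibly all the way to $\emptyset$); no index below the truncation point is ever touched, and an index once removed is never re-added (as already argued in the proof of Lemma~\ref{lem:pivotAlign1step}: there is no mechanism to add $l$ at a step $n > l$).

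Next I would use that $m \in P_u$. Since indices are only ever removed from the top, and $m$ survives all the way to $P_u$, no truncation between steps $m$ and $u$ can have cut below $m$; every $P_n$ for $m \le n \le u$ contains $m$, and moreover $P_n \cap \{1,\ldots,m-1\}$ is unchanged for all such $n$ — a step $n > m$ that executes (2-A) only appends $n > m$, and a step that executes (2-B) truncates at some $k \ge m$ (since $m \in \mathcal{Q} \subseteq P_{n-1}$ would be needed for $k < m$, impossible as $k = \max\mathcal{Q} \ge m$ whenever $m$ stays), hence leaves the part below $m$ alone. Combined with the analogous statement at step $m$ itself, where scenario (2-A) held (as noted, $m \in P_m$ forces (2-A) at $n=m$), so $P_m \cap \{1,\ldots,m-1\} = P_{m-1} \cap \{1,\ldots,m-1\} = P_{m-1}$, we get $P_u \cap \{1,\ldots,m-1\} = P_m \cap \{1,\ldots,m-1\} = P_{m-1}$, which is the claim.

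The main obstacle is bookkeeping the (2-B-i) case carefully: I must make sure that whenever a truncation occurs at a step $n$ with $m \le n \le u$, the truncation index $k$ satisfies $k \ge m$, for otherwise $m$ would be removed, contradicting $m \in P_u$ (using again that removed indices never come back). This needs the observation that $k = \max\mathcal{Q}$ and that $\mathcal{Q} \subseteq P_{n-1}$, together with an inductive hypothesis that $m \in P_{n-1}$ for all $n$ in the window; the induction is straightforward but must be set up so that the "never re-added" principle is invoked correctly. Once that is in place, the equality of the initial segments is immediate, and the claim follows.
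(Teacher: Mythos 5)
Your proof is correct and follows essentially the same approach as the paper's: both arguments boil down to the observation that a truncation (scenario 2-B) at some step between $m$ and $u$ which cut below $m$ would remove $m$ as well, and since removed indices are never re-added, this would contradict $m \in P_u$. One minor note: the parenthetical justification in your second paragraph ("since $m \in \mathcal{Q} \subseteq P_{n-1}$ would be needed for $k<m$, impossible as $k = \max\mathcal{Q} \ge m$ whenever $m$ stays") is circular as stated, but you identify and correct this in your final paragraph, where the argument is phrased properly as a contradiction from $m \in P_u$.
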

First, note that the elements of $P_{u}$ smaller than or equal to $m-1$ must have been acquired no later than step $m-1$, and then must have never been lost thereafter. Hence, they all belong to $P_{m-1}$. Meanwhile, all elements $P_{m-1} $ should have remained till step $u$ for the following reason. If an element of  $P_{m-1}\subseteq \{1, \ldots, m-1\}$ was lost at some step $n\in \{m,m+1, \ldots, u\}$, it would mean that scenario (2-B) was the case at step $n$, with $k=\max \mathcal{Q}$ being smaller than $m-1$. This means that $P_{n}$ lost not only $P_{m-1}$ but also $m$, which contradicts $P_{u} \ni m$. Hence the claim follows.

We now finish the proof by dividing into two cases.\begin{enumerate}
\item $l = m-1$: this means that scenario (2-A) was the case at both step $l$ and step $m=l+1$.  Hence, $J(r_{l+1}) \subseteq W_{l}^{-1} L_{l} :=w_{l}^{-1} \big(S^{1} \setminus I(t_{l})\big) $ must hold. This implies  \[
r_{l+1} \mathcal{I} \subseteq J(r_{l+1}) \subseteq w_{l}^{-1}\big(S^{1} \setminus I(t_{l})\big) ,\,\,
W_{l} r_{l+1} \mathcal{I} \subseteq W_{l}  w_{l}^{-1}I(t_{l}) = W_{l-1} r_{l} s_{l} t_{l}\big(S^{1} \setminus I(t_{l})\big)  \]
as desired.
\item $l < m-1$: in this case, $P_{m-1} = P_{u} \cap \{1, \ldots, m-1\} \subseteq \{1, \ldots, l\}$ does not contain $m-1$ so scenario (2-B) must have been the case at step $n=m-1$. Still, $P_{m-1} = P_{u} \cap \{1, \ldots, m-1\}$ contains an element $l$ so scenario (2-B-ii) is ruled out. Thus, scenario (2-B-i) was the case and $l$ must have been the maximum element of $\mathcal{Q}$. This leads to  $L_{m-1} := W_{l-1} r_{l} s_{l} \mathcal{I}$. We now know that scenario (2-A) was the case at step $n=m$, which implies $J(r_{m}) \subseteq W_{m}^{-1} L_{m-1}$. Hence, we conclude\[
W_{m}r_{m}\mathcal{I}\subseteq W_{m} J(r_{m})\subseteq L_{m-1} =  W_{l-1} r_{l} s_{l} t_{l} \big(S^{1} \setminus I(t_{1}) \big). \qedhere
\]
\end{enumerate}
\end{proof}

Recall once again that $\mathcal{I} \supseteq s\mathcal{I}$ for every $s \in S$. This combined with Lemma \ref{lem:pivotAlign1step} implies:
\begin{cor}
\label{cor:pivotAlign}
In the setting of Definition \ref{dfn:pivot}, let $P_{n} := \{i(\#P_{n}) >\ldots >i(2) > u(1)\}$. Then we have  \[
\begin{aligned}
W_{i(\#P_{n})-1} r_{i(\#P_{n})} \mathcal{I} \supseteq W_{i(\#P_{n})-1} r_{i(\#P_{n})}s_{i(\#P_{n})} \mathcal{I} \supseteq \ldots \supseteq  W_{i(1)-1} r_{i(1)} \mathcal{I} \supseteq W_{i(1) - 1} r_{i(1)} s_{i(1)} \mathcal{I}.
\end{aligned}
\]
\end{cor}

Next, we will observe that scenario (2-A) have high chance in Definition \ref{dfn:pivot}, when $\mathbf{r}, \mathbf{s}, \mathbf{t}$ are drawn based on a Schottky-uniform measure.

\begin{lem}\label{lem:pivotGain}
Let $S$ be a Schottky set with resolution $N$, with multiplicity $\zeta$ and with a median $\mathcal{I}$, and let $n \in \Z_{>0}$. Fix a sequence $\mathbf{w} = (w_{i})_{i=0}^{\infty}$ in $\Homeo(S^{1})$ and  a sequence  $\mathbf{s} = (s_{i})_{i=1}^{\infty}$ in $S$. Further, fix two sequences   $\mathbf{r} = (r_{i})_{i=1}^{\infty}$, $\mathbf{t} = (t_{i})_{i=1}^{\infty}$ in $S$ \emph{except the $n$-th entries}. Then for any Schottky-uniform probability measure on  $S$, we have \[
\Prob_{r_{n}, t_{n}\textrm{: i.i.d. $\sim \mu$}}\big( \#P_{n}(\mathbf{r}, \mathbf{s}, \mathbf{t} ; \mathbf{w}) = \# P_{n-1} (\mathbf{r}, \mathbf{s}, \mathbf{t}; \mathbf{w}) + 1 \big) \ge 1 - 4\zeta^{2}/\sqrt{N}.
\]
\end{lem}

\begin{proof}
Let $S = \cup_{i=1}^{N} \mathfrak{S}(I_{i}, J_{i})$ for essentially disjoint subsets $\{I_{i}, J_{i}\}_{i}$. Note that the set $P_{n-1}$ and the interval $L_{n-1}$ are determined from the fixed inputs. Now at step $n-1$ of the pivotal set construction, three possibilities arise: \begin{enumerate}
\item scenario (2-A) holds: Then we have $I(t_{n-1}) \in \mathcal{C}(w_{n-1};S)$ and $L_{n-1} =W_{n-2} r_{n-1} s_{n-1} \mathcal{I}$.
\item scenario (2-B-i) holds: Then $I(t_{k}) \in \mathcal{C}(w_{k} W_{k}^{-1} W_{n-1}; S)$ and $L_{n-1} := W_{k-1} r_{k} s_{k} \mathcal{I}$ holds for $k=\max P_{n-1}$.
\item scenario (2-B-ii) holds: Then $L_{n-1} := W_{n-1}\mathcal{I}$ contains every $W_{n-1}J_{i}$.
\end{enumerate}

The event under consideration is equivalent to saying that scenario (2-A) holds at step $n$.  First, Lemma \ref{lem:RepellingChoice} asserts that \[
\Prob_{t_{n} \sim \mu} \big( I(t_{n}) \in \mathcal{C}(w_{n}; S) \big) \le \frac{\zeta^{2}}{N} \cdot 2\sqrt{N} = \frac{2\zeta^{2}}{\sqrt{N}}.
\]
Let us now observe the condition $J(r_{n}) \subseteq W_{n-1}^{-1} L_{n-1}$ in each of the three cases at step $n$.\begin{enumerate}
\item scenario (2-A) holds: using Lemma \ref{lem:RepellingChoice} and the fact that $I(t_{n-1}) \in \mathcal{C}(w_{n-1}; S)$, we realize that  $\mathcal{R}\big(w_{n-1}^{-1} I(t_{n-1}); S\big)$ has at most  $\zeta^{2} \sqrt{N}$ elements. Moreover, when $J(r_{n}) \notin \mathcal{R}\big(w_{n-1}^{-1} I(t_{n-1});S\big)$ holds true, \[
J(r_{n}) \subseteq S^{1} \setminus  w_{n-1}^{-1} I(t_{n-1})= W_{n-1}^{-1} W_{n-2} r_{n-1} s_{n-1} t_{n-1}  \big(S^{1} \setminus I(t_{n-1})\big) = W_{n-1}^{-1} L_{n-1}
\]
also follows. In view of this, we conclude \[
\Prob_{r_{n} \sim \mu} \big(J(r_{n}) \subseteq W_{n-1}^{-1} L_{n-1}\big) \ge \Prob_{r_{n} \sim \mu} \big(J(r_{n}) \notin \mathcal{R}\big(w_{n-1}^{-1} I(t_{n-1}); S\big)\big) \ge 1 - \zeta^{2} /\sqrt{N}.
\]
\item scenario (2-B-i) holds: using Lemma \ref{lem:RepellingChoice} and the $I(t_{k}) \in \mathcal{C}(w_{k}W_{k}^{-1} W_{n-1}; S)$, we deduce that $\mathcal{R}\big(W_{n-1}^{-1} W_{k} w_{k}^{-1} I(t_{k}); S\big)$ has at most  $\zeta^{2} \sqrt{N}$ elements. Moreover, when $J(r_{n}) \notin \mathcal{R}\big(W_{n-1}^{-1} W_{k} w_{k}^{-1} I(t_{k});S\big)$ holds true, \[
J(r_{n}) \subseteq S^{1} \setminus W_{n-1}^{-1} W_{k} w_{k}^{-1}I(t_{k})  = W_{n-1}^{-1} W_{k-1} r_{k} s_{k} t_{k} \big(S^{1} \setminus I(t_{k}\big)) = W_{n-1}^{-1} L_{n-1}
\]
follows. Now a calculation analogous to the one in Item (1) tells us that  $J(r_{n}) \subseteq W_{n-1}^{-1} L_{n-1}$ happens for probability at least $1-\zeta^{2}/\sqrt{N}$.
 \item scenario (2-B-ii) holds: Then whatever $J(r_{n})$ is among $J_{1}, \ldots, J_{N}$, $J(r_{n}) \in W_{n-1}^{-1} L_{n-1} = \mathcal{I}$ holds.
\end{enumerate}
Based on our estimates for the probabilities for $I(t_{n}) \notin \mathcal{C}(w_{n};S)$ and $J(r_{n}) \subseteq W_{n-1}^{-1} L_{n-1}$ in the above three cases, we can conclude that $\#P_{n+1} = \#P_{n}+1$ happens for probability at least $1-4\zeta^{2} /\sqrt{N}$.
\end{proof}

We now prove a crucial lemma. Roughly speaking, it asserts that changing choices for $\mathbf{s}$ at the pivotal times does not change the set of pivotal times.

\begin{lem}\label{lem:pivotEquiv}
Let $S$ be a Schottky set with a median, let $n \in \Z_{>0}$ and let $\mathbf{w} = (w_{i})_{i=0}^{n}$ be a sequence in $\Homeo(S^{1})$. Let  $\mathbf{r} = (r_{i})_{i=1}^{\infty}, \mathbf{s}=(s_{i})_{i=1}^{\infty}, \bar{\mathbf{s}}=(\mathbf{s}_{i})_{i=1}^{\infty}, \mathbf{t}=(t_{i})_{i=1}^{\infty} $ be sequences in $S$. If  we have:\[
\textrm{$s_{i} = \bar{s}_{i}$ for each $i \in \{1, \ldots, n\} \setminus P_{n}(\mathbf{r}, \mathbf{s}, \mathbf{t} ; \mathbf{w})$},
\]
then f$P_{l}(\mathbf{r}, \mathbf{s}, \mathbf{t} ; \mathbf{w}) = P_{l}(\mathbf{r}, \bar{\mathbf{s}}, \mathbf{t}; \mathbf{w})$ holds for each $1 \le l \le n$.
\end{lem}

\begin{proof}
As an elementary version of this lemma, let us consider:
\begin{claim}
In the setting as above, let $k \in P_{n}(\mathbf{r}, \mathbf{s}, \mathbf{t}; \mathbf{w})$ be an arbitrary pivotal time. If $s_{l} = \bar{s}_{l}$ holds for all $l \neq k$, then $P_{l}(\mathbf{r}, \mathbf{s}, \mathbf{t} ; \mathbf{w}) = P_{l}(\mathbf{r}, \bar{\mathbf{s}}, \mathbf{t}; \mathbf{w})$ holds for all $1 \le l \le n$.
\end{claim}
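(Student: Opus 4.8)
The plan is to run the two pivotal constructions --- the original one, driven by $(\mathbf{r},\mathbf{s},\mathbf{t};\mathbf{w})$, and the ``barred'' one, driven by $(\mathbf{r},\bar{\mathbf{s}},\mathbf{t};\mathbf{w})$ --- in lockstep, and to show by induction on $l\in\{0,1,\ldots,n\}$ that the pivotal sets coincide at every step. The governing observation is that replacing $s_{k}$ by $\bar{s}_{k}$ changes the running products only by a \emph{single left multiplication}: setting $u:=W_{k-1}\,r_{k}\,\bar{s}_{k}\,s_{k}^{-1}\,r_{k}^{-1}\,W_{k-1}^{-1}\in\Homeo(S^{1})$, an immediate computation gives $\bar{W}_{j}=W_{j}$ for $j<k$ and $\bar{W}_{j}=uW_{j}$ for $j\ge k$, where $\bar{W}_{j}$, and likewise $\bar{L}_{j}$, $\bar{P}_{j}$, denote the quantities computed in the barred run. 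I will prove, simultaneously with $\bar{P}_{l}=P_{l}$, that $\bar{L}_{l}=L_{l}$ for $l<k$ and $\bar{L}_{l}=uL_{l}$ for $l\ge k$; carrying $L_{l}$ along is what keeps the recursion's branching conditions under control.

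For $l<k$ there is nothing to do: by Lemma \ref{lem:pivotDependence} the objects $P_{l}$, $L_{l}$, $W_{l}$ depend only on the entries indexed by $1,\ldots,l$, on which $\mathbf{s}$ and $\bar{\mathbf{s}}$ agree. For $l=k$, note first that since $k\in P_{n}$ the index $k$ must have entered the pivotal set exactly at step $k$ (as in the first claim inside the proof of Lemma \ref{lem:pivotAlign1step}: there is no mechanism by which $k$ could be added at a later step), so scenario (2-A) held at step $k$ in the original run. But the two conditions defining scenario (2-A) at step $k$, namely $J(r_{k})\subseteq W_{k-1}^{-1}L_{k-1}$ and $I(t_{k})\notin\mathcal{C}(w_{k};S)$, do not involve $s_{k}$, and $W_{k-1},L_{k-1},r_{k},t_{k},w_{k}$ are untouched; hence scenario (2-A) also fires at step $k$ in the barred run, giving $\bar{P}_{k}=P_{k-1}\cup\{k\}=P_{k}$, while the direct computations $\bar{W}_{k}=W_{k-1}r_{k}\bar{s}_{k}t_{k}w_{k}=uW_{k}$ and $\bar{L}_{k}=W_{k-1}r_{k}\bar{s}_{k}t_{k}(S^{1}\setminus I(t_{k}))=uL_{k}$ close the base of the induction.

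For the inductive step, fix $l$ with $k<l\le n$. By the inductive hypothesis $\bar{W}_{l-1}=uW_{l-1}$ and $\bar{L}_{l-1}=uL_{l-1}$, so $\bar{W}_{l-1}^{-1}\bar{L}_{l-1}=W_{l-1}^{-1}L_{l-1}$, and the scenario-(2-A) test at step $l$ reads the same in both runs; if it succeeds, one concludes $\bar{P}_{l}=P_{l}$ and $\bar{L}_{l}=uL_{l}$ exactly as in the base case. If it fails, both runs pass to case (2-B), and one must compare $\mathcal{Q}$ with its barred analogue $\bar{\mathcal{Q}}$. For $i\in P_{l-1}=\bar{P}_{l-1}$ with $i\ge k$ the conjugating factor cancels, $w_{i}\bar{W}_{i}^{-1}\bar{W}_{l}=w_{i}W_{i}^{-1}W_{l}$, so $i\in\mathcal{Q}\Leftrightarrow i\in\bar{\mathcal{Q}}$. \textbf{This is the crux:} for $i<k$ the two runs may genuinely differ on membership in $\mathcal{Q}$, so one cannot simply assert $\mathcal{Q}=\bar{\mathcal{Q}}$. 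The way out is that $k$ being pivotal forces the comparison to live above $k$: since $k\in P_{n}$ and $k\le l$, the index $k$ has not been discarded, so $k\in P_{l}$; this excludes scenario (2-B-ii) (which would empty $P_{l}$) and forces scenario (2-B-i) with $k':=\max\mathcal{Q}\ge k$. Hence $\mathcal{Q}\cap\{j:j\ge k\}$ is non-empty and coincides with $\bar{\mathcal{Q}}\cap\{j:j\ge k\}$, whence $\bar{\mathcal{Q}}\ne\emptyset$ and $\max\bar{\mathcal{Q}}=k'$; the barred run then also takes scenario (2-B-i) with the same $k'$, so $\bar{P}_{l}=\bar{P}_{l-1}\cap\{1,\ldots,k'\}=P_{l}$, and $\bar{L}_{l}=uL_{l}$ by the same cancellation (splitting off the subcases $k'=k$ and $k'>k$). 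This completes the induction and hence the claim. Finally, I would deduce Lemma \ref{lem:pivotEquiv} by substituting $\bar{s}_{i}$ for $s_{i}$ one pivotal time at a time along $P_{n}(\mathbf{r},\mathbf{s},\mathbf{t};\mathbf{w})$: each such index is still pivotal after the earlier substitutions (by the claim just proved), so the claim applies repeatedly, and Lemma \ref{lem:pivotDependence} lets us disregard the entries beyond index $n$, where $\mathbf{s}$ and $\bar{\mathbf{s}}$ might differ outside the hypothesis.
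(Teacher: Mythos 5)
Your proof is correct and follows the paper's own argument almost line for line: the same left-conjugating element (your $u$, the paper's $g$), the same invariant $\bar{L}_{l}=uL_{l}$ carried through the induction, and the same resolution of the crux — that $k\in P_{l}$ for $l\ge k$ rules out scenario (2-B-ii) and forces $\max\mathcal{Q}\ge k$, so the maxima of $\mathcal{Q}$ and $\bar{\mathcal{Q}}$ are both attained in $\{k,\ldots,l-1\}$, where the two runs agree. Nothing to add; the deduction of Lemma~\ref{lem:pivotEquiv} by changing one pivotal coordinate at a time is also exactly the paper's.
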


Put in other words, changing the choice at a \emph{single} pivotal time does not change the set of pivotal times (at step $1, \ldots, n$). Assuming this claim, in the setting of lemma, we can move from $\mathbf{s}$ to $\bar{\mathbf{s}}$ by changing the choices at the pivotal times, one per each time; then $P_{l}$'s remain unchanged, and the desired statement holds.

It remains to prove the claim. We will omit $\mathbf{w}, \mathbf{r}, \mathbf{t}$ in the sequel as they are fixed forever. When $l$ is smaller than $k$, $P_{l}(\mathbf{s})$ only depends on $s_{1}, \ldots, s_{k-1}$ and $\mathbf{w}, \mathbf{r}, \mathbf{t}$, so it coincides with $P_{l}(\bar{\mathbf{s}})$. Similarly, the value of $L_{l}$ should coincide for the two inputs.

At step $l=k$, we note that $k \in P_{n}(\mathbf{s})$. Hence, scenario (2-A) must have held. Here, note that the two conditions \[
J(r_{k}) \subseteq W_{k-1}^{-1} L_{k-1}, \,\, I(t_{k}) \notin \mathcal{C}(w_{k}; S)
\]
only depend on $s_{1}, \ldots, s_{k-1}$ (and other fixed inputs $\mathbf{w}, \mathbf{r}, \mathbf{t})$. Hence, these conditions are unchanged after switching $s_{k}$ to $\bar{s}_{k}$, and we have  \[
P_{k}(\bar{\mathbf{s}}) = P_{k-1}(\bar{\mathbf{s}}) \cup \{k\} = P_{k-1}(\mathbf{s}) \cup \{k\} = P_{k}(\mathbf{s}).
\]
At this moment, note the relation \[
L_{k}(\mathbf{s}) = W_{k-1} r_{k} s_{k} t_{k}I(t_{k}), \,\,L_{k}(\bar{\mathbf{s}}) = W_{k-1} r_{k} \bar{s}_{k} t_{k}I(t_{k}) = g \cdot W_{k-1} r_{k} s_{k}  t_{k}I(t_{k}) \quad(\textrm{$g := W_{k-1} r_{k} \bar{s}_{k} s_{k}^{-1} r_{k}^{-1} W_{k-1}^{-1}$}).
\]
and $W_{l}(\bar{\mathbf{s}}) = g \cdot W_{l}(\mathbf{s})$ for each $k \le l \le n$.

Now, we inductively prove the following for $k< l \le n$: \begin{enumerate}
\item If scenario (2-A) holds at step $l$ for the input $\mathbf{s}$, the same is true for the input $\bar{\mathbf{s}}$.
\item If scenario (2-B-i) holds at step $l$ for the input $\mathbf{s}$, the same is true for the input $\bar{\mathbf{s}}$
\item scenario (2-B-ii) does not happen at step $l$.
\item In every case, $P_{l}(\mathbf{s}) = P_{l}(\bar{\mathbf{s}})$ and $L_{l}(\bar{\mathbf{s}}) =g L_{l}(\mathbf{s})$ hold.
\end{enumerate}
As the base case, we have observed Item (4) for  $l=k$. For general $k<l \le n$, we will start by assuming Item(4) for $l-1$. Recall the conditions for scenario (2-A) at step $l$, for the input $\mathbf{s}$:\[
J(r_{l}) \subseteq W_{l-1}(\mathbf{s})^{-1} L_{l-1}(\mathbf{s}), \,\, I(t_{l}) \notin \mathcal{C}(w_{l}; S).
\]
The latter one is clearly independent of the inputs $\mathbf{s}$. Furthermore, the inductive hypothesis tells us that  \[
\big[ J(r_{l}) \subseteq W_{l-1} (\mathbf{s})^{-1} L_{l-1}(\mathbf{s}) \big] \Leftrightarrow \big[
J(r_{l}) \subseteq W_{l-1}(\mathbf{s})^{-1} g^{-1} \cdot g L_{l-1}(\mathbf{s}) =W_{l-1}(\bar{\mathbf{s}})^{-1}  L_{l-1}(\bar{\mathbf{s}})\big].
\]
In summary,  scenario (2-A) at step $l$ for the input $\mathbf{s}$ is equivalent to the one for $\bar{\mathbf{s}}$. Furthermore, when these equivalent conditions hold true,  \[
P_{l}(\bar{\mathbf{s}}) = P_{l-1}(\bar{\mathbf{s}}) \cup \{l\} = P_{l-1}(\mathbf{s}) \cup \{l\} = P_{l}(\mathbf{s})
\]
and \[
L_{l} (\bar{\mathbf{s}}) :=  W_{l}(\bar{\mathbf{s}}) \cdot w_{l}^{-1}I(t_{l})= g  W_{l}(\mathbf{s}) \cdot w_{l}^{-1}I(t_{l})=g  L_{l} (\mathbf{s})
\]
also holds.

If scenario (2-B) holds for the input  $\mathbf{s}$, the same is true for $\mathbf{s}'$ because of the observation just before. We then focus on the set \[
\mathcal{Q}(\mathbf{s}) =\mathcal{Q}(\mathbf{s};l):= \Big\{ i \in P_{l-1} : I(t_{i}) \notin \mathcal{C}\big(w_{k} \cdot W_{i}(\mathbf{s})^{-1} W_{l} (\mathbf{s}); S\big) \Big\} 
\]
Here, recall that $W_{i}(\bar{\mathbf{s}}) = g W_{i}(\mathbf{s})$ for $i\ge k$. This implies that \[
\mathcal{Q}(\mathbf{s}; l) \cap \{k, k+1, \ldots, l-1\} = \mathcal{Q}(\bar{\mathbf{s}}; l) \cap \{k, k+1, \ldots, l-1\}.
\]
 Meanwhile, we know that $k$ is alive in  $P_{n}(\mathbf{s})$. This means that $k$ must not have been lost at step $l$. In other words, when scenario (2-B) holds at step $l$, $\mathcal{Q}(\mathbf{s};l)$ must contain an element greater than or equal to $k$. Hence, scenario (2-B-ii) is ruled out.

For this reason, $\mathcal{Q}(\bar{\mathbf{s}}; l) \cap \{k, k+1, \ldots, l-1\} = \mathcal{Q}(\mathbf{s}; l) \cap \{k, k+1, \ldots, l-1\}$ is nonempty. Because the maximum elements of $\mathcal{Q}(\mathbf{s})$ and $\mathcal{Q}(\bar{\mathbf{s}})$ are taken in this upper sections, we conclude that the two sets have the same maximum  $u \ge k$. We then conclude \[
P_{l}(\bar{\mathbf{s}}) = P_{l-1}(\bar{\mathbf{s}}) \cap \{1,\ldots, u\}= P_{l-1}(\mathbf{s})  \cap \{1,\ldots, u\} = P_{l}(\mathbf{s})
\]
and \[
L_{l} (\bar{\mathbf{s}}) := W_{u}(\bar{\mathbf{s}}) \cdot  w_{u}^{-1} I(t_{u}) = g W_{u}(\mathbf{s}) w_{u}^{-1}I(t_{u}) =g  L_{l} (\mathbf{s})
\]
Here we used the fact that $u \ge k$. This ends the proof.
\end{proof}

Thanks to the previous lemma, we can now declare an equivalence relation based on the change of choices at the pivotal times, or in short, \emph{pivoting}.

\begin{dfn}\label{dfn:pivotEquivClass}
Let $S$ be a Schottky set with a median and let $\mathbf{w}$ be a sequence in $\Homeo(S^{1})$, as in the setting of Definition \ref{dfn:pivot}. We fix an integer $n \in \Z_{>0}$. Now, on the ambient set $S^{\Z_{>0}} \times S^{\Z_{>0}} \times S^{\Z_{>0}}$ parametrized by coordinates $(\mathbf{r}, \mathbf{s}, \mathbf{t})$, we declare the following equivalence relation: \[
\big[(\mathbf{r}, \mathbf{s}, \mathbf{t}) \sim_{n} (\bar{\mathbf{r}}, \bar{\mathbf{s}}, \bar{\mathbf{t}}) \big] \Leftrightarrow \left[\begin{array}{c}\textrm{$r_{i} =\bar{r}_{i}$ and $t_{i} = \bar{t}_{i}$ for each $i \in \Z_{>0} \setminus \{n+1\}$ and} \\ \textrm{$\bar{s}_{i} = s_{i}$ for each $i \in \Z_{>0} \setminus P_{n}(\mathbf{r}, \mathbf{s}, \mathbf{t} ; \mathbf{w})$}\end{array}\right].
\]
\end{dfn}
This is indeed an equivalence relation thanks to Lemma \ref{lem:pivotDependence}  and Lemma  \ref{lem:pivotEquiv}. Note that this equivalence relation crucially depends on the value of $n$.

By abuse of notation, we will use $(\mathbf{r}, \mathbf{s}, \mathbf{t})$ for the coordinate functions on  $S^{\Z_{>0}} \times S^{\Z_{>0}} \times S^{\Z_{>0}}$; each element will be characterized by its value of $r_{1}, r_{2}, \ldots, s_{1}, s_{2}, \ldots, t_{1}, t_{2}, \ldots$. Now consider an arbitrary equivalence class  $\mathcal{E} \subseteq S^{\Z_{>0}} \times S^{\Z_{>0}} \times S^{\Z_{>0}}$ made by $\sim_{n}$. Then every element of $\mathcal{E}$ have the common ($n$-th step) set of pivotal times $P_{n}$, which we denote by $P_{n}(\mathcal{E})$. On $\mathcal{E}$, $r_{i}$ and $t_{i}$ can take arbitrary values in $S$ for $i=n+1$ and are fixed for $i \neq n+1$. On $\mathcal{E}$, $s_{i}$ can take arbitrary values in $S$ for $i \in P_{n}(\mathcal{E})$ and is fixed for $i \notin P_{n}(\mathcal{E})$.

When $S$ is endowed with a probability measure $\mu$, the ambient space $S^{\Z_{>0}} \times S^{\Z_{>0}} \times S^{\Z_{>0}}$ also becomes a probability space (with the product measure of $\mu$'s). Here, $r_{i}, s_{i}, t_{i}$'s become $\mu$-i.i.d.s. Now if we restrict ourselves on $\mathcal{E}$--the arbitrary equivalence relation, $\{r_{i}, t_{i} : i\neq n+1\}$, $\{s_{i} : i \notin P_{n}(\mathcal{E})\}$ are all fixed constants and  $\{s_{i} : i \in P_{n}(\mathcal{E})\}$, $\{r_{n+1}, t_{n+1} \}$ are $\mu$-i.i.d.s.

\begin{prop}\label{prop:pivotDistbn}
Let $S$ be a Schottky set with a median and with resolution $N$, and let $\mu$ be a Schottky-uniform measure on $S$. Fix a sequence $\mathbf{w}$  in $\Homeo(S^{1})$ and fix $n \in \Z_{>0}$. Let $\mathcal{E}$f be an equivalence class made by $\sim_{n}$ given on $S^{\Z_{>0}} \times S^{\Z_{>0}}\times S^{\Z_{>0}}$. Then for each $j\ge0$, we have  \[
\Prob_{\textrm{$\{r_{i}, s_{i}, t_{i} : i > 0\}$: $\mu$-i.i.d.s}}\Big(\#P_{n+1} (\mathbf{r}, \mathbf{s}, \mathbf{t} ; \mathbf{w}) < \# P_{n}(\mathbf{r}, \mathbf{s}, \mathbf{t} ; \mathbf{w})-j \, \Big| \, \mathcal{E} \Big) \le (4\zeta^{2}/\sqrt{N})^{j+1}
\]
\end{prop}

\begin{proof}
For notational convenience, we denote $P_{n}(\mathcal{E})$, the common $n$-th step set of pivotal times, by $\{i(M) < i(M-1) < \ldots < i(2) < i(1)\}$ (with $M=\# \mathcal{P}_{n}(\mathcal{E})$). Here, $M$ and $i(1), i(2), \ldots, i(M)$ are fixed information across $\mathcal{E}$, as well as  $\{w_{i} : i >0\}, \{r_{i}, t_{i} : i \neq n+1\}, \{s_{i} : i \notin P_{n}(\mathcal{E})\}$. In other words, elements in  $\mathcal{E}$ are determined  by the values of  $(s_{i(M)}, \ldots, s_{i(1)}, r_{n+1}, t_{n+1})$ which are $\mu$-i.i.d.s.

We will now define sets \[\begin{aligned}
A_{0} &\subseteq S \times S, \\
A_{1} (r_{n+1}, t_{n+1}) &\subseteq S, \\
A_{2} (s_{i(1)}, r_{n+1}, t_{n+1}) &\subseteq S, \\
\cdots, \\
A_{M} (s_{i(M-1)}, \ldots, s_{i(1)}, r_{n+1}, t_{n+1}) &\subseteq S
\end{aligned}
\]
and prove:
\begin{claim}\label{claim:AiInductive}
\begin{enumerate}
\item $\Prob_{\mu \times \mu}(A_{0}) \ge 1 - 4\zeta^{2}/\sqrt{N}$.
\item For every $(s_{i(M)}, \ldots, s_{i(1)}, r_{n+1}, t_{n+1}) \in S^{M+2}$, if $(r_{n+1}, t_{n+1}) \in A_{0}$ holds, then we have \[
\# P_{n+1} (s_{i(M)}, \ldots, s_{i(1)}, r_{n+1}, t_{n+1}) =\# P_{n}(s_{i(M)}, \ldots, s_{i(1)})+1.
\]
\item For every $1 \le l \le M$ and for every $(s_{i(l-1)}, \ldots, s_{i(1)}, r_{n+1}, t_{n+1}) \in S^{l+1}$ we have \[\Prob_{s_{i(l)} \sim \mu}\big(s_{i(l)} \in A_{l}(s_{i(l-1)}, \ldots, s_{i(1)}, r_{n+1}, t_{n+1})\big) \ge 1-4\zeta^{2}/\sqrt{N}.
\]
\item For every $(s_{i(M)}, \ldots, s_{i(1)}, r_{n+1}, t_{n+1}) \in S^{M+2}$ and  $1 \le l \le M$, whenever  $s_{i(l)}$ belongs to $A_{l} (s_{i(l-1)}, \ldots, s_{i(1)}, r_{n+1}, t_{n+1})$, we have\[
\# P_{n+1} (s_{i(M)}, \ldots, s_{i(1)}, r_{n+1}, t_{n+1}) \ge \# P_{n}(s_{i(M)}, \ldots, s_{i(1)}) - l
\]
\end{enumerate}
\end{claim}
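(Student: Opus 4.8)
The plan is to construct the sets $A_l$ so that membership records that the \emph{top} pivotal times of $P_n(\mathcal E)$ survive the $(n+1)$‑st step, and then to read off all four items; the probability estimates will come entirely from Lemma~\ref{lem:RepellingChoice} together with the fact that $\mu$ is Schottky‑uniform, so that the index of the copy $\mathfrak S(I_i,J_i)$ a resampled variable lands in is uniform on $\{1,\dots,N\}$. Throughout write $M:=\#P_n(\mathcal E)$ and $P_n(\mathcal E)=\{i(M)<i(M-1)<\dots<i(1)\}$, and recall $W_m=W_{m-1}r_ms_mt_mw_m$. For $A_0$: a direct inspection of Definition~\ref{dfn:pivot} together with Lemma~\ref{lem:pivotEquiv} and Corollary~\ref{cor:pivotAlign} shows that in each of the three possible scenarios at step $n$ the set $W_n^{-1}L_n$ is \emph{the same} for every choice of $s$ at the times of $P_n(\mathcal E)$ (if scenario (2-A) or (2-B-i) held at step $n$ then $W_n^{-1}L_n=w_k^{-1}\big(S^1\setminus I(t_k)\big)$ with $k=\max P_n(\mathcal E)$, since the intervening blocks carry no pivotal $s$; if (2-B-ii) held then $W_n^{-1}L_n=\mathcal I$). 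Hence the event ``scenario (2-A) occurs at step $n+1$'', i.e.\ $J(r_{n+1})\subseteq W_n^{-1}L_n$ and $I(t_{n+1})\notin\mathcal C(w_{n+1};S)$, is a genuine subset $A_0\subseteq S\times S$; item (1) is then exactly the computation in the proof of Lemma~\ref{lem:pivotGain} applied at step $n+1$, and item (2) is immediate because scenario (2-A) puts $P_{n+1}=P_n\cup\{n+1\}$.

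Next, $A_l$ for $1\le l\le M$. Observe that whether $i(j)\in\mathcal Q$ (the set appearing in scenario (2-B) at step $n+1$) depends only on the blocks strictly after $i(j)$, i.e.\ only on $s_{i(j-1)},\dots,s_{i(1)},r_{n+1},t_{n+1}$. Set $A_l:=S$ if $(r_{n+1},t_{n+1})\in A_0$, or $l=M$, or at least one of $i(1),\dots,i(l)$ lies in $\mathcal Q$ (all determined by the data $A_l$ is allowed to depend on); otherwise put $A_l:=\{\,s_{i(l)}\in S:\ i(l+1)\in\mathcal Q\,\}$, which is well defined since, once $s_{i(l-1)},\dots,s_{i(1)},r_{n+1},t_{n+1}$ are fixed, $\{i(l+1)\in\mathcal Q\}$ is a function of $s_{i(l)}$ alone. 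Item (4) is then a short case check: scenario (2-A) gives $\#P_{n+1}=\#P_n+1$; in scenario (2-B) with $\mathcal Q\neq\emptyset$ one has $P_{n+1}=P_n\cap\{1,\dots,\max\mathcal Q\}$, so if $\max\mathcal Q=i(j)$ then $\#P_{n+1}=\#P_n-(j-1)$, and $s_{i(l)}\in A_l$ forces some $i(j')$ with $j'\le l+1$ to lie in $\mathcal Q$, whence $j\le l+1$ and $\#P_{n+1}\ge\#P_n-l$; the case $\mathcal Q=\emptyset$ (scenario (2-B-ii)) is excluded by $s_{i(l)}\in A_l$ unless $l=M$, and for $l=M$ it still yields $\#P_{n+1}=0=\#P_n-M$.

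It remains to prove item (3), and by construction the only case requiring work is $A_l=\{s_{i(l)}:i(l+1)\in\mathcal Q\}$. Unwinding the definitions and isolating the single block carrying $s_{i(l)}$ gives $w_{i(l+1)}W_{i(l+1)}^{-1}W_{n+1}=E\,s_{i(l)}\,F$, with $E=w_{i(l+1)}\cdot(\text{blocks }i(l+1)+1,\dots,i(l)-1)\cdot r_{i(l)}$ and $F$ both fixed (the intervening blocks carry no pivotal $s$ because $i(l+1)$ and $i(l)$ are consecutive in $P_n(\mathcal E)$). The crucial point is that, since scenario (2-A) held at the pivotal time $i(l+1)$ and $i(l)$ is the next pivotal time, Lemma~\ref{lem:pivotAlign1step} gives $W_{i(l+1)-1}r_{i(l+1)}s_{i(l+1)}t_{i(l+1)}\big(S^1\setminus I(t_{i(l+1)})\big)\supseteq W_{i(l)-1}r_{i(l)}\mathcal I$, which rearranges to $E\mathcal I\subseteq S^1\setminus I(t_{i(l+1)})$; since $\overline{J_j}\subseteq\inte\mathcal I$ for all $j$, no $E\overline{J_j}$ meets $\overline{I(t_{i(l+1)})}$, so $\overline{I(t_{i(l+1)})}\notin\mathcal C(E;S)$ and, by Lemma~\ref{lem:RepellingChoice}, $\mathcal R\big(E^{-1}\overline{I(t_{i(l+1)})};S\big)=\emptyset$. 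Writing $K:=E^{-1}\overline{I(t_{i(l+1)})}$, if $s_{i(l)}\in\mathfrak S(I_p,J_p)$ with $I_p\notin\mathcal C(F;S)$ then $K$ is essentially disjoint from $J_p$, hence $s_{i(l)}^{-1}K\subseteq I_p$ and $\#\{j:\ s_{i(l)}^{-1}K\cap F\overline{J_j}\neq\emptyset\}\le\#\{j:\ \overline{I_p}\cap F\overline{J_j}\neq\emptyset\}<\zeta^2\sqrt N$, which is precisely $i(l+1)\in\mathcal Q$. Since $\#\mathcal C(F;S)\le 2\zeta^2\sqrt N$ by Lemma~\ref{lem:RepellingChoice}, Schottky‑uniformity gives $\Prob_{s_{i(l)}\sim\mu}\big(I(s_{i(l)})\in\mathcal C(F;S)\big)\le 2\zeta^2/\sqrt N\le 4\zeta^2/\sqrt N$, which is item (3). (The Proposition then follows: $\{\#P_{n+1}<\#P_n-j\}$ forces $(r_{n+1},t_{n+1})\notin A_0$ and $s_{i(l)}\notin A_l$ for $l=1,\dots,j$, and conditioning successively on $(r_{n+1},t_{n+1}),s_{i(1)},\dots,s_{i(j)}$ and using items (1) and (3) bounds its probability by $(4\zeta^2/\sqrt N)^{j+1}$.)

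The step I expect to be the main obstacle is item (3): correctly tracking the one block that carries the resampled variable $s_{i(l)}$ through $w_{i(l+1)}W_{i(l+1)}^{-1}W_{n+1}$, and recognizing that the nesting supplied by Lemma~\ref{lem:pivotAlign1step} at the pivotal time $i(l+1)$ is exactly what forces $\overline{I(t_{i(l+1)})}\notin\mathcal C(E;S)$ --- without this, $\mathcal R\big(E^{-1}\overline{I(t_{i(l+1)})};S\big)$ could be as large as $N$ and the estimate would collapse. A secondary nuisance is the bookkeeping underlying the well‑definedness of $A_0$ and of the $A_l$: checking that each listed event genuinely depends only on the coordinates permitted in the statement. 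This is routine, but it relies on the facts, already packaged in Lemma~\ref{lem:pivotEquiv} and Corollary~\ref{cor:pivotAlign}, that $P_n(\mathcal E)$ and the relevant nested images are stable under pivoting.
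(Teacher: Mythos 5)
Your proposal is correct and follows essentially the same argument as the paper: both use Lemma~\ref{lem:pivotAlign1step} at the pivotal time $i(l+1)$ together with Lemma~\ref{lem:RepellingChoice} applied to $F = t_{i(l)}w_{i(l)}W_{i(l)}^{-1}W_{n+1}$, and both reduce item (1)--(2) to the $\mathcal E$-invariance of $W_n^{-1}L_n$ already established in Lemma~\ref{lem:pivotGain}. The only (cosmetic) difference is the packaging: the paper defines $A_l$ directly as $\{s : I(s) \notin \mathcal C(F;S)\}$ so that item (3) is immediate and item (4) is the substantive step, whereas you take $A_l = \{i(l+1)\in\mathcal Q\}$ so that item (4) is a routine case-check and item (3) carries the weight --- the underlying geometric inclusion is identical.
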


Let us now prove the proposition from this claim. We let  \[
B_{0} := \big\{ (\mathbf{r}, \mathbf{s}, \mathbf{t}) \in \mathcal{E} : (r_{n+1}, t_{n+1}) \notin A_{0} \big\}
\]and inductively define  \[
B_{l} := \big\{ (\mathbf{r}, \mathbf{s}, \mathbf{t}) \in B_{l-1} : s_{i(l)} \notin A_{l-1} (\mathbf{r}, \mathbf{s}, \mathbf{t}; \mathbf{w})\big\}
\]
for $l=1, \ldots, M$. Then by Claim \ref{claim:AiInductive}(3), \[
\begin{aligned}
\Prob_{\mathcal{E}}  \big( B_{l} \big) &= \int_{(\mathbf{r}, \mathbf{s}, \mathbf{t}) \in B_{l-1}} \Prob_{s_{i(l)} \sim \mu} \big( s_{i(l)} \notin A_{l-1} \, \big| \, s_{i(l-1)}, \ldots, s_{i(1)}, r_{n+1}, t_{n+1} \big) \, d\mu(s_{i(l-1)}) \cdots d\mu(s_{i(1)}) \,d\mu(r_{n+1}) \,d\mu(t_{n+1}) \\
&\le \frac{4\zeta^{2}}{\sqrt{N}} \cdot \Prob_{\mathcal{E}}  \big(B_{l-1}) 
\end{aligned}
\]
holds. Moreover, Claim \ref{claim:AiInductive}(1) implies $\Prob_{\mathcal{E}} (B_{0}) \le 4\zeta^{2}/\sqrt{N}$. Combined together, we  observe $\Prob_{\mathcal{E}} (B_{l}) \le(4\zeta^{2}/\sqrt{N})^{l+1}$ for $l=0, \ldots, M$.

Next, \[
(\mathbf{r}, \mathbf{s}, \mathbf{t}) \in \mathcal{E} \setminus B_{0} \Rightarrow \#P_{n+1}(\mathbf{r}, \mathbf{s}, \mathbf{t}; \mathbf{w}) \ge \# P_{n}(\mathbf{r}, \mathbf{s}, \mathbf{t}; \mathbf{w})
\]
holds true; we also have    \[
(\mathbf{r}, \mathbf{s}, \mathbf{t}) \in B_{l-1} \setminus B_{l} \Rightarrow \#P_{n+!}(\mathbf{r}, \mathbf{s}, \mathbf{t}; \mathbf{w}) \ge \# P_{n}(\mathbf{r}, \mathbf{s}, \mathbf{t}; \mathbf{w}) - l
\]
for $l=1, \ldots, M$. In other words, we have  \[
(\mathbf{r}, \mathbf{s}, \mathbf{t}) \in \mathcal{E} \setminus B_{l} \Rightarrow \#P_{n+1}(\mathbf{r}, \mathbf{s}, \mathbf{t}; \mathbf{w}) \ge \# P_{n}(\mathbf{r}, \mathbf{s}, \mathbf{t}; \mathbf{w}) - l
\]
for each $l$. Since the probability of  $B_{l}$ is at most  $(4\zeta^{2}/\sqrt{N})^{l+1}$, the proposition follows.

It remains to prove the claim. The claim regarding $A_{0}$ was already established in Lemma \ref{lem:pivotGain}. That means, regardless of the values of $(s_{i(M)}, \ldots, s_{i(1)})$, we proved that the probability for $(r_{n+1}, t_{n+1})$ to satisfy $\#P_{n+1} = \# P_{n} + 1$ is at least  $1-4\zeta^{2} /\sqrt{N}$.  We will prove something more: we claim that the candidates for $r_{n+1}, t_{n+1}$ that make $\#P_{n+1} = \#P_{n} +1$  are independent of  $(s_{i(M)}, \ldots, s_{i(1)})$. When restricted to $\mathcal{E}$, $\# P_{n+1} (\mathbf{r}, \mathbf{s}, \mathbf{t}; \mathbf{w}) = \# P_{n} (\mathbf{r}, \mathbf{s}, \mathbf{t}; \mathbf{w}) +1$ holds if and only if  $I(t_{n+1}) \in \mathcal{C}(w_{n+1}; S)$ and $J(r_{n+1}) \subseteq W_{n}^{-1} L_{n} $. Here, \[
W_{n}^{-1} L_{n} \mathcal{I} = \left\{ \begin{array}{cc}  \begin{array}{c} W_{n}^{-1} W_{\max P_{n}} w_{\max P_{n}}^{-1} I(t_{\max P_{n}}) \\= \big( w_{i(1)} r_{i(1)+1} s_{i(1) + 1} t_{i(1) + 1} w_{i(1) + 1} \cdots r_{n} s_{n} t_{n} w_{n}\big)^{-1} I(t_{i(1)})\end{array} & (\textrm{when} \,\,P_{n}(\mathcal{E}) \neq \emptyset) \\ \\
W_{n}^{-1} W_{n-1} \mathcal{I} = r_{n}s_{n}t_{n} w_{n} \mathcal{I} & (\textrm{when}\,\,P_{n}(\mathcal{E}) = \emptyset) \end{array}\right.
\]
are fixed throughout $\mathcal{E}$. This is why $\#P_{n+1} = \#P_{n} + 1$ depends on the choice of $r_{n+1}$ and $t_{n+1}$, regardless of the values of  $s_{i(1)}, \ldots, s_{i(M)}$. This settles Claim \ref{claim:AiInductive}(1), (2) and also the construction of $A_{0}$.

Now for each $l \in \{1, \ldots, M\}$ and for each choices $(s_{i(l-1)}, \ldots, s_{i(1)}, r_{n+1}, t_{n+1}) \in S^{l+1}$, we define \[
\begin{aligned}
A_{l} &:= \Big\{ s \in S : I(s) \notin \mathcal{C}\big( t_{i(l)} w_{i(l)} \cdot W_{i(l)}^{-1} W_{n+1} ; S\big) \Big\} \\
&= \Big\{ s \in S : I(s) \notin \mathcal{C}\big( t_{i(l)} w_{i(l)} \cdot (r_{i(l)+1} s_{i(l) + 1} t_{i(l)+1} w_{i(l) + 1}) \cdots (r_{n} s_{n} t_{n} w_{n}) \cdot (r_{n+1} s_{n+1} t_{n+1} w_{n+1}) ;S\big) \Big\}.
\end{aligned}
\]
Recall that $\{r_{i}, t_{i} : i \neq n+1\}$ and $\{s_{i} : i \notin P_{n}(\mathcal{E})\}$ are all fixed maps; hence, this $A_{l}$ depends only on the choices of  $s_{i(l-1)}, \ldots, s_{i(1)}$ and $r_{n+1}, t_{n+1}$. Furthermore, Lemma \ref{lem:RepellingChoice} tells us that $\Prob_{\mu}(A_{l}) \ge 1-2\zeta^{2}/\sqrt{N}$.

Now for an arbitrary $(s_{i(M)}, \ldots, s_{i(1)}, r_{n+1}, t_{n+1}) \in S^{M+2}$, suppose that $s_{i(l)} \in A_{l}(s_{i(l-1)}, \ldots, s_{i(1)}, r_{n+1}, t_{n+1})$. Then by definition we have \begin{equation}\label{eqn:estIJ}
\#\big\{ j : \bar{I}(s_{i(l)}) \cap t_{i(l)} w_{i(l)} \cdot W_{i(l)}^{-1} W_{n+1}\bar{J}_{j} \neq \emptyset\big\} \le  \zeta^{2} \sqrt{N}.
\end{equation} Meanwhile, Lemma \ref{lem:pivotAlign1step} tells us that \[
W_{i(l+1)-1} r_{i(l+1)} s_{i(l+1)}t_{i(l+1)} \big(S^{1} \setminus I(t_{i(l+1)})\big)\supseteq W_{i(l) - 1} r_{i(l)} \mathcal{I}.
\]
Finally, by the property of $\mathcal{I}$ as a median for $S$, we have $s_{i(l)}I(s_{i(l)}) \supseteq S^{1} \setminus \mathcal{I}$. Combining these two facts yields \[
W_{i(l+1)} w_{i(l+1)}^{-1} \bar{I}(t_{i(l+1)}) \subseteq \inte \big(  S^{1} \setminus W_{i(l)-1} r_{i(l)}  \mathcal{I}\big) \subseteq W_{i(l)-1} r_{i(l)} s_{i(l)}  \bar{I}(s_{i(l)}).
\]
Using Inequality \ref{eqn:estIJ}, we observe \[
\#\big\{ j : \bar{I}(t_{i(l+1)}) \cap (W_{i(l+1)} w_{i(l+1)}^{-1})^{-1} \cdot (W_{i(l) - 1} r_{i(l)} s_{i(l)}) \cdot t_{i(l)} w_{i(l)} \cdot W_{i(l)}^{-1} W_{n+1}\bar{J}_{j} \neq \emptyset\big\} \le \sqrt{N}.
\]
In other words,  $I(t_{i(l+1)}) \notin \mathcal{C}\big(w_{i(l+1)} \cdot (W_{i(l+1)})^{-1} \cdot W_{n+1}; S\big)$ holds true. This implies that the set $\mathcal{Q}$ in scenario (2-B) at step $n+1$ contains  $i(l+1)$. Hence, $P_{n+1}(\mathbf{r}, \mathbf{s}, \mathbf{t})$ contains $P_{n}(\mathcal{E}) \cap \{1,\ldots, i(l+1)\} = \{i(M) < \ldots < i(l+1)\}$ at least, which leads to the inequality $\#P_{n+1} \ge \#P_{n} - l$. This concludes Claim \ref{claim:AiInductive}(3), (4) and the entire proof.
\end{proof}

\begin{cor}\label{cor:pivotDistbn}
Let $S$ be a Schottky set with a median and with resolution $N$, and let $\mu$ be a Schottky-uniform measure on $S$. Fix a sequence $\mathbf{w}$  in $\Homeo(S^{1})$. When $S^{\Z_{>0}}\times  S^{\Z_{>0}}\times S^{\Z_{>0}}$ is endowed with the product measure of  $\mu$, we have the following for each integer $j, k, n \ge 0$:  \begin{equation}\label{eqn:condpivotDistbn}
\Prob \Big( \# P_{n+1}(\mathbf{r}, \mathbf{s}, \mathbf{t}; \mathbf{w}) < k - j \, \Big | \, \# P_{n}(\mathbf{r}, \mathbf{s}, \mathbf{t}; \mathbf{w}) = k \Big) \le (4/\sqrt{N})^{j+1}.
\end{equation}
\end{cor} 

\begin{proof}
First fix $n$ and give the equivalence relation $\sim_{n}$ on $(S^{\Z_{>0}})^{3}$. On each equivalence class, the $n$-th step set of pivotal times $P_{n}$ is fixed so its cardinality is also constant. Considering this, in order to prove Inequality \ref{eqn:condpivotDistbn} when conditioned on the size of $P_{n}$, it suffices to observe it on each equivalence class. This is then reduced to Proposition \ref{prop:pivotDistbn}.
\end{proof}

\begin{cor}\label{cor:pivotDistbn2}
Let $S$ be a Schottky set with resolution $N$, with multiplicity $\zeta$ and with a median $\mathcal{I}$, and let $\mu$ be a Schottky-uniform measure on $S$. Fix a sequence $\mathbf{w}$  in $\Homeo(S^{1})$. Let $X_{1}, X_{2}, \ldots$ be i.i.d.s with distribution \begin{equation}\label{eqn:expRV}
\Prob(X_{i}=j) = \left\{\begin{array}{cc} 1 - 4\zeta^{2}/N & \textrm{if}\,\, j=1,\\  \left(\frac{4\zeta^{2}}{N}\right)^{-j}\left( 1 - \frac{4\zeta^{2}}{N}\right)& \textrm{if}\,\, j < 0, \\ 0 & \textrm{otherwise.}\end{array}\right.
\end{equation}
When $S^{\Z_{>0}}\times  S^{\Z_{>0}}\times S^{\Z_{>0}}$ is endowed with the product measure of  $\mu$, $\#P_{n}$ dominates $X_{1} + \ldots + X_{n}$ in distribution for each $n$. That means,  \[
\Prob(\#P_{n}(s) \ge T) \ge \Prob( X_{1} + \ldots + X_{n} \ge T) \quad (\forall T \in \mathbb{Z}_{\ge 0}).
\]
\end{cor} 

\begin{proof}
Let $X_{i}$ be the RVs as in \ref{eqn:expRV}; we can require them to be independent from $S^{\Z_{>0}}\times  S^{\Z_{>0}}\times S^{\Z_{>0}}$, the ambient probability space on which $P_{1}, P_{2}, \ldots$ are define. Now Lemma  \ref{lem:pivotGain} and Corollary \ref{cor:pivotDistbn} tells us the following for each  $0 \le k \le n$ and $i, j \ge 0$:\begin{equation}\label{eqn:probDistbn}
\Prob\left( \# P_{k+1}(s)\ge i + j \, \Big| \, \#P_{k}(s) = i\right) \ge  \left\{\begin{array}{cc} 1 - \frac{4\zeta^{2}}{N} & \textrm{if}\,\, j=1,\\ 1 - \left(\frac{4\zeta^{2}}{N}\right)^{-j+1} & \textrm{if}\,\, j < 0.\end{array}\right.
\end{equation}

Let us prove that for each $k=1, \ldots, n$ and for each $i \in \Z_{\ge 0}$ we have  $\Prob(\#P_{k} \ge i) \ge \Prob(X_{1} + \ldots + X_{k} \ge i)$. For $k=1$, the claim follows from Inequality \ref{eqn:probDistbn} because $\#P_{k-1} = 0$ always. Now, assuming the statement for $k$ as an induction hypothesis, we observe  \[\begin{aligned}
\Prob(\#P_{k+1}\ge i) &\ge \Prob(\#P_{k} + X_{k+1} \ge i) = \sum_{j} \Prob(\#P_{k}\ge j) \Prob(X_{k+1} = i - j)\\
 & \ge \sum_{j} \Prob(X_{1} + \cdots + X_{k} \ge j) \Prob(X_{k+1} = i-j) \\
 &= \Prob(X_{1} + \cdots + X_{k} + X_{k+1} \ge i). \qedhere
 \end{aligned}
\]
\end{proof}

\begin{cor}\label{cor:pivotDistbn3}
Let $S$ be a Schottky set  with multiplicity $\zeta$, with resolution $N \ge 2500 \zeta^{2}$ and with a median $\mathcal{I}$, and let $\mu$ be a Schottky-uniform measure on $S$. Fix a sequence $\mathbf{w}$  in $\Homeo(S^{1})$. When $S^{\Z_{>0}}\times  S^{\Z_{>0}}\times S^{\Z_{>0}}$ is endowed with the product measure of  $\mu$, we have\[
\Prob\big( \# P_{n}(\mathbf{r},\mathbf{s}, \mathbf{t}; \mathbf{w}) \le n/2 \big) \le   \big(3 \sqrt[4]{4\zeta^{2} /N}\big)^{n} \le 0.6^{n}
\]
for each $n \in \Z_{>0}$.
\end{cor}

\begin{proof}
For convenience, we denote $4\zeta^{2}/N$ by $a$. We will employ  Chebyshev's inequality. First recall $X_{i}$'s in Display \ref{eqn:probDistbn}. We have\[\begin{aligned}
\E \left[ \sqrt{a}^{X_{i}} \right] &= \left(1-a\right) \cdot \left[\sqrt{a }+ \sum_{j=1}^{\infty} \sqrt{a}^{-j} \cdot a ^{j} \right] \\
&= \left(1-a\right) \sqrt{a} \left( 1 + \frac{1}{1-\sqrt{a}} \right) \\
&= 2 \sqrt{a} +  a -  \sqrt{a}^{3} \le 3 \sqrt{a}.
\end{aligned}
\]
Here, the last inequality used the fact that $\sqrt{a} \le 1$. Now Corollary \ref{cor:pivotDistbn2} and the independence of  $X_{i}$'s imply that \[
\E \left[ \sqrt{a}^{\#P_{n}(\mathbf{s})} \right] \le \E \left[ \sqrt{a}^{\sum_{i=1}^{n} X_{i}}\right] = \prod_{i=1}^{n} \E \left[ \sqrt{a}^{X_{i}} \right] \le \big(3 \sqrt{a}\big)^{n}.
\]
Now Chebyshev's inequality tells us that  \[
\E \left[ \sqrt{a}^{\#P_{n}(\mathbf{s})} \right] \ge\Prob(\# P_{n}(\mathbf{s}) \le n/2) \cdot \sqrt{a}^{n/2}.
\]
The conclusion follows by combining the two inequalities.
\end{proof}

We now finally prove Proposition\ref{prop:pivotingPrep}.

\begin{proof}[Proof of Proposition \ref{prop:pivotingPrep}]
In view of Lemma \ref{lem:pivotingPrep1}, it suffices to prove the following.

\begin{claim}
Let $S$ be a Schottky set with multiplicity $\zeta$, with resolution $N \ge 2500\zeta^{2}$ and with a median $\mathcal{I}$, and let $\mu$ be a Schottky-uniform measure on $S$. Fix an integer $n \in \Z_{>0}$ and a sequence $\mathbf{w}$  in $\Homeo(S^{1})$. Let $\Omega =S^{\Z_{>0}}\times  S^{\Z_{>0}}\times S^{\Z_{>0}}$ be the probability space endowed with the product measure of  $\mu$. Then there exists a measurable subset $A$ of $\Omega$, a measurable partition  $\mathcal{P} = \{\mathcal{E}_{\alpha}\}_{\alpha}$ of $A$, and $\Homeo(S^{1})$-valued random variables $\{w_{i}'\}_{i=0, \ldots, \lfloor n/2 \rfloor}, \{s_{i}'\}_{i=1, \ldots, \lfloor n/2 \rfloor}$ such that the following hold:

\begin{enumerate}
\item $\Prob(A) \ge 1 - 0.6^{n}$.
\item When restricted on each equivalence class $\mathcal{E} \in \mathcal{P}$, $w_{0}', \ldots, w_{\lfloor n/2 \rfloor}$ are each fixed maps and $s_{i}'$'s are $\mu$-i.i.d.s.
\item $w_{i}' \mathcal{I} \subseteq \mathcal{I}$ for each $i=1, \ldots, \lfloor n/2 \rfloor - 1$.
\item On $A$, the following equality holds: \[
w_{0} r_{1} s_{1} t_{1} w_{1} \ldots r_{n} s_{n} t_{n} w_{n} = w_{0}' s_{1}' w_{1}' \cdots s_{\lfloor n/2 \rfloor}' w_{\lfloor n/2 \rfloor}'.
\]
\end{enumerate}
\end{claim}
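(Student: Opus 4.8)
The plan is to take $A$ to be the event $\{\#P_n>n/2\}$ for the pivotal times $P_n=P_n(\mathbf{r},\mathbf{s},\mathbf{t};\mathbf{w})$ of Definition \ref{dfn:pivot}, built on the ambient space $\Omega=(S^{\Z_{>0}})^{3}$ with the given sequence $\mathbf{w}$. Since $P_n$ is measurable and determined by finitely many coordinates (Lemma \ref{lem:pivotDependence}), $A$ is measurable; Corollary \ref{cor:pivotDistbn3} gives $\Prob(A)\ge 1-0.6^{n}$, which is conclusion (1), and on $A$ there are at least $L:=\lfloor n/2\rfloor$ pivotal times, say $P_n=\{p_1<\dots<p_M\}$ with $M\ge L+1$.

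Next I would regroup the word $Z_n=w_0 r_1 s_1 t_1 w_1\cdots r_n s_n t_n w_n$ around the first $L$ pivotal Schottky letters: set $s_j':=s_{p_j}$ for $j=1,\dots,L$, let $w_0'$ be the sub-word strictly preceding $s_{p_1}$ with the trailing $r_{p_1}$ attached, let $w_j'$ (for $1\le j\le L-1$) be the sub-word strictly between $s_{p_j}$ and $s_{p_{j+1}}$ with $t_{p_j}$ and $r_{p_{j+1}}$ attached at the ends, and let $w_L'$ be the entire tail following $s_{p_L}$. Telescoping then yields $Z_n=w_0's_1'w_1'\cdots s_L'w_L'$ on $A$, which is conclusion (4); concretely $w_0's_1'w_1'\cdots s_j'w_j'=W_{p_{j+1}-1}r_{p_{j+1}}$ for $j<L$.

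Conclusion (3) is where Lemma \ref{lem:pivotAlign1step} does the work: applied to the consecutive pivotal times $p_j<p_{j+1}$ (both $\le n$) it gives $W_{p_j-1}r_{p_j}s_{p_j}\mathcal{I}\supseteq W_{p_{j+1}-1}r_{p_{j+1}}\mathcal{I}$, and since by construction $W_{p_{j+1}-1}r_{p_{j+1}}=W_{p_j-1}r_{p_j}s_{p_j}\cdot w_j'$, cancelling $(W_{p_j-1}r_{p_j}s_{p_j})^{-1}$ gives $w_j'\mathcal{I}\subseteq\mathcal{I}$ for $j=1,\dots,L-1$, as required. For conclusion (2) I take $\mathcal{P}$ to be the partition of $A$ into the level sets of the measurable map recording $P_n$, all $r_i,t_i$ with $i\le n$, all non-pivotal $s_i$ with $i\le n$, and additionally the pivotal letters $s_{p_{L+1}},\dots,s_{p_M}$ beyond the first $L$; this refines the restriction to $A$ of the equivalence relation $\sim_n$ of Definition \ref{dfn:pivotEquivClass}. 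On a class $\mathcal{E}\in\mathcal{P}$ all of the listed data is constant, so each $w_j'$ — which only involves $r_i,t_i$ with $i\le n$ and $s_i$ that are either non-pivotal or among $s_{p_{L+1}},\dots,s_{p_M}$ — is constant on $\mathcal{E}$; and by the structure of $\sim_n$-classes recorded after Definition \ref{dfn:pivotEquivClass}, the remaining coordinates $s_{p_1},\dots,s_{p_L}=s_1',\dots,s_L'$ are i.i.d.\ $\mu$ on $\mathcal{E}$.

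The main difficulty is bookkeeping rather than conceptual: the raw equivalence relation $\sim_n$ frees \emph{all} pivotal $s$-letters, so the tail block $w_L'$ (which contains $s_{p_{L+1}},\dots,s_{p_M}$) would not be constant on a $\sim_n$-class; the fix is to condition in addition on those surplus pivotal letters, which — because on a $\sim_n$-class the pivotal $s$-letters are jointly i.i.d.\ $\mu$ — pins down $w_L'$ without disturbing the independence or uniform law of $s_1',\dots,s_L'$. Everything else is a direct unwinding of Definition \ref{dfn:pivot}, Lemma \ref{lem:pivotAlign1step} (nestedness of the pivotal images), and Corollary \ref{cor:pivotDistbn3} (the exponential lower bound on $\#P_n$). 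Combined with Lemma \ref{lem:pivotingPrep1}, this gives Proposition \ref{prop:pivotingPrep} after composing the two decompositions and relabelling $\kappa$.
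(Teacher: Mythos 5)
Your proof is correct and follows essentially the same route as the paper's: take $A=\{\#P_n>n/2\}$ by Corollary~\ref{cor:pivotDistbn3}, regroup $Z_n$ around the first $\lfloor n/2\rfloor$ pivotal $s$-letters, and extract nestedness of the $w_i'$ from Lemma~\ref{lem:pivotAlign1step}. The paper packages the additional conditioning on the surplus pivotal letters as a coarser equivalence relation $\sim_n'$ (free only the $\lfloor n/2\rfloor$ smallest pivotal $s$-coordinates rather than all of them), which is exactly the refinement you describe; the justification that this is a genuine equivalence relation—and that the remaining free letters stay $\mu$-i.i.d.—rests, as you note, on Lemma~\ref{lem:pivotEquiv}.
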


Corollary \ref{cor:pivotDistbn3} tells us that \[
\Prob \Big(A := \big\{ (\mathbf{r}, \mathbf{s}, \mathbf{t}) \in (S^{\Z_{>0}})^{3} : \#P_{n}(\mathbf{r}, \mathbf{s}, \mathbf{t}; \mathbf{w}) > n/2 \big\} \Big)\ge 1-0.6^{n}.
\]
 Next, we declare an equivalence relation on $(S^{\Z_{>0}})^{3}$ as follows: \[
\big[(\mathbf{r}, \mathbf{s}, \mathbf{t}) \sim_{n}' (\bar{\mathbf{r}}, \bar{\mathbf{s}}, \bar{\mathbf{t}}) \big] \Leftrightarrow \left[\begin{array}{c}\textrm{$r_{i} =\bar{r}_{i}$ and $t_{i} = \bar{t}_{i}$ for each $i \in \Z_{>0}$}, \\ \textrm{ $\bar{s}_{i} = s_{i}$ unless $i$ is among the $n/2$ smallest pivotal times of $P_{n}(\mathbf{r}, \mathbf{s}, \mathbf{t} ; \mathbf{w})$}\end{array}\right]
\]
As observed in Lemma  \ref{lem:pivotEquiv}, changing the coordinate of $\mathbf{s}$ at a pivotal times does not change the set of pivotal times, and hence does not change the ``$n/2$ smallest pivotal times". Therefore, $\sim_{n}$ is indeed an equivalence relation. Note that the cardinality of the set of pivotal times is constant across each equivalence class, so every equivalence class is either contained in $A$ or disjoint from $A$. In other words, $A$ is a (disjoint) union of some equivalence classes and $\sim_{n}$ restricts to an equivalence relation on $A$.

Next, fix a $\sim_{n}$-equivalence class $\mathcal{E}$ contained in $A$. Its all element share the $n$-th step set of pivotal times $P_{n}(\mathcal{E})$, which we denote by $\{i(1) < i(2) < \ldots)\}$.
Since we are assuming $\mathcal{E} \subseteq A$, there are at least $n/2$ elements of $P_{n}(\mathcal{E})$. We then construct \[
\begin{aligned}
w_{0}' &:= W_{i(1) - 1} r_{i(1)} = w_{0} \cdot r_{1}s_{1}t_{1}w_{1} \cdots r_{i(1) - 1}s_{i(1)-1}t_{i(1)-1}w_{i(1)} r_{i(1)}, \\
w_{l}'&:= t_{i(l)} w_{i(l)} W_{i(l)}^{-1} W_{i(l+1) - 1} r_{i(l+1)} \\
&= t_{i(l)} w_{i(l)} \cdot r_{i(l)+1} s_{i(l)+1}t_{i(l)+1}w_{i(l)+1} \cdots r_{i(l+1) - 1}s_{i(l+1)-1}t_{i(l+1)-1}w_{i(l+1)} r_{i(l+1)}, & (l=1, \ldots,  \lfloor n/2 \rfloor) \\
w_{ \lfloor n/2 \rfloor}'&:= t_{i( \lfloor n/2  \rfloor)} w_{i( \lfloor n/2 \rfloor)} W_{i( \lfloor n/2 \rfloor)}^{-1} W_{n}\\
&= t_{i(\lfloor n/2  \rfloor)} w_{i(\lfloor n/2  \rfloor)} \cdot r_{i(\lfloor n/2  \rfloor)+1} s_{i(\lfloor n/2  \rfloor)+1}t_{i(\lfloor n/2  \rfloor)+1}w_{i(\lfloor n/2  \rfloor)+1} \cdots r_{m}s_{n}t_{n}w_{n} .
\end{aligned}
\]	
The definition of $\sim_{n}$ tells us that the maps  $w_{0}', w_{1}', \ldots, w_{M}'$ are fixed throughout $\mathcal{E}$. Moreover,  we observed in Lemma \ref{lem:pivotAlign1step} that  $w_{l}' \mathcal{I} \subseteq \mathcal{I}$ holds for $l=1, \ldots, \lfloor n/2 \rfloor - 1$. Furthermore, $s_{l}' := s_{i(l)}$'s are $\mu$-i.i.d.s when restricted on $\mathcal{E}$. The equality \[
w_{0}' s_{1}' w_{1}'\cdots  s_{\lfloor n/2 \rfloor}' w_{i(\lfloor n/2 \rfloor )}' = w_{0} r_{1}s_{1}t_{1} w_{1} \cdots r_{n}s_{n}t_{n}w_{n}
\]
is clear on $\mathcal{E}$. This ends the proof.
\end{proof}

\medskip
\bibliographystyle{alpha}
\bibliography{Tits}

\end{document}